\def\input@path{{"D:/ISTA/Notes/Article - Rational singularities for totally negative quiver moment maps/"}}
\theoremstyle{definition}
\newtheorem{df}{Definition}[subsection]
\theoremstyle{plain}
\newtheorem{thm}[df]{Theorem}
\newtheorem*{thm*}{Theorem}
\newtheorem{cj}[df]{Conjecture}
\newtheorem*{cj*}{Conjecture}
\newtheorem{prop}[df]{Proposition}
\newtheorem*{prop*}{Proposition}
\newtheorem{lem}[df]{Lemma}
\newtheorem*{lem*}{Lemma}
\newtheorem{cor}[df]{Corollary}
\theoremstyle{remark}
\newtheorem{exmp}[df]{Example}
\newtheorem{rmk}[df]{Remark}
\DeclareMathOperator{\Rea}{Re}
\DeclareMathOperator{\Aut}{Aut}
\DeclareMathOperator{\Id}{Id}
\DeclareMathOperator{\Hom}{Hom}
\DeclareMathOperator{\Ext}{Ext}
\DeclareMathOperator{\ext}{ext}
\DeclareMathOperator{\Mat}{Mat}
\DeclareMathOperator{\GL}{GL}
\DeclareMathOperator{\supp}{supp}
\DeclareMathOperator{\rk}{rk}
\DeclareMathOperator{\Rep}{Rep}
\DeclareMathOperator{\Res}{Res}
\DeclareMathOperator{\sm}{sm}
\DeclareMathOperator{\sg}{sg}
\DeclareMathOperator{\Spec}{Spec}
\newcommand\dd{\mathbf{d}}
\newcommand\ee{\mathbf{e}}
\newcommand\KK{\mathbb{K}}
\newcommand\RR{\mathbb{R}}
\newcommand\git{/\!\!/}
\begin{document}
\title{Rational singularities for moment maps of totally negative quivers}
\author{Tanguy Vernet\thanks{Institute of Science and Technology Austria, Hausel Group}}
\maketitle
\begin{abstract}
We prove that the zero-fiber of the moment map of a totally negative
quiver has rational singularities. Our proof consists in generalizing
dimension bounds on jet spaces of this fiber, which were introduced
by Budur. We also transfer the rational singularities property to
other moduli spaces of objects in 2-Calabi-Yau categories, based on
recent work of Davison.

This has interesting arithmetic applications on quiver moment maps
and moduli spaces of objects in 2-Calabi-Yau categories. First, we
generalize results of Wyss on the asymptotic behaviour of counts of
jets of quiver moment maps over finite fields. Moreover, we interpret
the limit of counts of jets on a given moduli space as its $p$-adic
volume under a canonical measure analogous to the measure built by
Carocci, Orecchia and Wyss on certain moduli spaces of coherent sheaves.
\end{abstract}

\section{Introduction}

Let $Q$ be a quiver and $\dd$ a dimension vector. The moment map
$\mu_{Q,\dd}$ is a central object in the geometric representation
theory of quivers. It is a building block of Lusztig's nilpotent variety
\cite{Lus91,KS97,Lus00} and Nakajima's quiver varieties \cite{Nak94,Nak98},
which were used in geometric realizations of Kac-Moody Lie algebras
and associated quantum groups. A systematic study of geometric properties
of $\mu_{Q,\dd}$ and of quiver varieties was then undertaken by Crawley-Boevey
in the early 2000s \cite{CB01,CB02,CB03a}. Since then, many other
aspects of quiver moment maps have been investigated, such as the
cohomology and singularities of quiver varieties \cite{Hau10,HLRV11,HLRV13a,BS21}
or counts of $\mathbb{F}_{q}$-points of $\mu_{Q,\dd}^{-1}(0)$ \cite{Moz11a,Dav18,Dav23c,BSV20}.
The latter have proved a useful technique to study cohomological Hall
algebras and other counts of quiver representations such as Kac's
polynomials (see \cite{S18} for a survey).

More recently, Wyss observed that the count of jets of $\mu_{Q,\dd}^{-1}(0)$
over finite fields has an interesting asymptotic behaviour \cite{Wys17b}.
Consider the sequence $q^{-n\dim\mu_{Q,\dd}^{-1}(0)}\cdot\sharp\mu_{Q,\dd}^{-1}(0)(\mathbb{F}_{q}[t]/t^{n}),\ n\geq1$.
When $\dd=\underline{1}$, Wyss showed that this sequence converges
when $n$ goes to infinity if, and only if, the graph underlying $Q$
is $2$-connected. Moreover, by computing the local Igusa zeta function
of $\mu_{Q,\underline{1}}^{-1}(0)$, Wyss found an explicit rational
fraction $W_{Q}\in\mathbb{Q}(T)$ in terms of the graphical hyperplane
arrangement associated to $Q$, such that:\[
W_Q(q)=\underset{n\rightarrow +\infty}{\lim}
q^{-n\dim\mu_{Q,\dd}^{-1}(0)}\cdot\sharp\mu_{Q,\dd}^{-1}(0)(\mathbb{F}_{q}[t]/(t^{n})).
\]As discussed in \cite{Wys17b}, $W_{Q}$ enjoys nice numerical properties:
it has a palyndromic numerator $W_{Q}'$; $W_{Q}'$ conjecturally
has non-negative coefficients and is related to the asymptotic analog
of Kac's polynomials over $\mathbb{F}_{q}[t]/(t^{n})$. This is reminiscent
of similar results on Kac's polynomials (i.e. when $n=1$) \cite{CBVB04,Moz11a,Dav23a}
and raises the following questions: can we generalize these asymptotic
counts to higher dimension vectors ($\dd>\underline{1}$)? and can
we find a geometric interpretation of these counts?

\paragraph*{Convergence results for counts of jets}

We can reformulate the first question as follows: for which pairs
$(Q,\dd)$ does this sequence converge, when $n$ goes to infinity?
By work of Musta\c{t}\v{a}, Aizenbud-Avni and Glazer \cite{Mus01,AA18,Gla19},
if $\mu_{Q,\dd}^{-1}(0)$ is a local complete intersection, the sequence
$q^{-n\dim\mu_{Q,\dd}^{-1}(0)}\cdot\sharp\mu_{Q,\dd}^{-1}(0)(\mathbb{F}_{q}[t]/(t^{n})),\ n\geq1$
is bounded precisely when $\mu_{Q,\dd}^{-1}(0)$ has rational singularities.
Using the local Igusa zeta function of $\mu_{Q,\dd}^{-1}(0)$, we
further show that the counts \textit{converge} when $\mu_{Q,\dd}^{-1}(0)$
has rational singularities.

Rational singularities of $\mu_{Q,\dd}$ were brought into focus by
Aizenbud-Avni \cite{AA16} and Budur \cite{Bud21}, when $Q=S_{g}$
is the $g$-loop quiver, in connection with representation growth
of arithmetic groups. They established that $\mu_{S_{g},\dd}^{-1}(0)$
has rational singularities for $g$ large enough. Budur's proof (which
was later generalized to more general moment maps by Herbig, Schwarz
and Seaton \cite{HSS21}) relies on a stratification of $\mu_{Q,\dd}^{-1}(0)$.
Using étale slices, the problem can be reduced to the study of the
deepest stratum of $\mu_{Q,\dd}^{-1}(0)$ for various auxiliary quivers.
Budur introduced an explicit bound on the dimension of jet spaces
over the deepest stratum, by applying earlier results of Crawley-Boevey
\cite{CB03a}. Using this bound, one proves that $\mu_{Q,\dd}^{-1}(0)$
has rational singularities thanks to a criterion by Musta\c{t}\v{a}
\cite{Mus01}.

We identify a large class of pairs $(Q,\dd)$ for which the above
dimension bounds hold in most cases, and which is closed under taking
auxiliary quivers. This is the class of pairs $(Q,\dd)$ where $Q$
is totally negative i.e. the graph underlying $Q$ is complete with
at least two loops at each vertex. Actually, we also require a mild
condition on the dimension vector $\dd$, to ensure that $\mu_{Q,\dd}^{-1}(0)$
is a complete intersection. In that case, we say that the pair $(Q,\dd)$
has property (P) (see Definition \ref{Def/Prop(P)}). Our main result
is the following:

\begin{thm} \label{Thm/MainResIntro}

Let $Q$ be a quiver and $\dd\in\mathbb{N}^{Q_{0}}\setminus\{0\}$
such that $(Q,\dd)$ has property (P). Then $\mu_{Q,\dd}^{-1}(0)$
has rational singularities.

\end{thm}

As mentioned above, the dimension bounds introduced by Budur can be
established for most, but not all pairs $(Q,\dd)$. Actually the bound
fails for some totally negative quivers. This failure occurs for precisely
one pair $(Q,\dd)$ among those considered in \cite{Bud21}, due to
a computational gap in the proof (see Remark \ref{Rmk/BoundFailure}).
For general pairs $(Q,\dd)$ with property (P), we show that the bound
only fails when $\dd=\underline{1}$, in which case we can exploit
Wyss' results on counts of jets to prove that $\mu_{Q,\dd}^{-1}(0)$
has rational singularities regardless. As a corollary, we obtain a
partial answer to our initial question:

\begin{cor} \label{Cor/JetCountMomMap}

Let $Q$ be a quiver and $\dd\in\mathbb{N}^{Q_{0}}\setminus\{0\}$
such that $(Q,\dd)$ has property (P). Then the sequence of rational
numbers\[
q^{-n\dim\mu_{Q,\dd}^{-1}(0)}\cdot\sharp\mu_{Q,\dd}^{-1}(0)(\mathbb{F}_{q}[t]/(t^{n})),\ n\geq1
\]converges when $n$ goes to infinity.

\end{cor}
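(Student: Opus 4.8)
The plan is to deduce Corollary \ref{Cor/JetCountMomMap} formally from Theorem \ref{Thm/MainResIntro} by feeding it into the arithmetic--geometric dictionary of Musta\c{t}\v{a}, Aizenbud--Avni and Glazer recalled in the introduction. Write $X = \mu_{Q,\dd}^{-1}(0)$. First I would isolate what property (P) provides besides rational singularities: the restriction on $\dd$ built into Definition \ref{Def/Prop(P)} is precisely Crawley-Boevey's numerical condition \cite{CB01b} ensuring that $X$ is a complete intersection of the expected dimension in $\Rep(\overline{Q},\dd)$, hence in particular a local complete intersection. For an lci the notion ``rational singularities'' coincides with ``canonical singularities'', which is exactly the input Musta\c{t}\v{a}'s criterion requires; and by Theorem \ref{Thm/MainResIntro}, $X_{\CC}$ has rational singularities.

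The substantive step is the translation to point counts. By Musta\c{t}\v{a}'s criterion \cite{Mus01} for locally complete intersections, rational singularities of $X_{\CC}$ is equivalent to each jet scheme $J_m(X_{\CC})$ being irreducible of dimension $(m+1)\dim X$ for every $m\ge 0$. This is a constructible condition over $\Spec\mathbb{Z}$, so it spreads out: for all but finitely many primes $p$ --- and, after a direct check or by uniformity of the construction, for all $p$ --- the reduction $J_m(X_{\mathbb{F}_q})$ is again irreducible of the expected dimension. Since $\sharp X(\mathbb{F}_q[t]/(t^n)) = \sharp J_{n-1}(X)(\mathbb{F}_q)$, one then invokes the work of Aizenbud--Avni \cite{AA18} and Glazer \cite{Gla19}: the Poincar\'e--Igusa series $\sum_{n\ge 1}\sharp X(\mathbb{F}_q[t]/(t^n))\,T^n$ is a rational function of $T$ by Denef--Loeser motivic integration, and the irreducibility of all jet schemes forces its pole of smallest modulus to be a \emph{simple} pole at $T = q^{-\dim X}$. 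Given that the series is rational, this last property is equivalent to the convergence of $q^{-n\dim X}\cdot \sharp X(\mathbb{F}_q[t]/(t^n))$ as $n\to\infty$, which is the assertion of the Corollary; the limit is moreover a rational function of $q$, in the spirit of Wyss's $W_Q$ \cite{Wys17}.

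I expect the only genuine difficulty to be the passage from characteristic $0$ to characteristic $p$: Theorem \ref{Thm/MainResIntro} and Musta\c{t}\v{a}'s jet-scheme criterion concern $X_{\CC}$, whereas the counts in the Corollary live over $\mathbb{F}_q$, so one must verify that property (P) --- or at least the lci property and the expected-dimension bounds on jet schemes --- survives reduction modulo the primes occurring in the statement. This can be done either by observing that the defining equations of $\mu_{Q,\dd}$ and the whole slice-and-stratification argument behind Theorem \ref{Thm/MainResIntro} are visibly independent of the characteristic, or, more cheaply, by excluding a finite set of bad primes and stating the Corollary for the remaining $q$. Everything past that bookkeeping is a chain of citations.
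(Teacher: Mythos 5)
Your overall strategy coincides with the paper's: property (P) gives via Proposition \ref{Prop/GeoMomMap} that $X=\mu_{Q,\dd}^{-1}(0)$ is a reduced, irreducible complete intersection, Theorem \ref{Thm/MainResIntro} gives rational singularities in characteristic zero, and the convergence is then extracted from the arithmetic dictionary of Musta\c{t}\v{a}, Aizenbud--Avni, Glazer and Wyss; the paper packages exactly this dictionary as Proposition \ref{Prop/JetCountLim}, so its proof of the corollary is a two-line citation (and note that the precise statement proved in the body, Corollary \ref{Cor/TotNegJetLim}, carries the caveat ``for almost all primes $p$''). The problem is the bridge you build inside that dictionary. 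Your spreading-out step does not work as stated: ``$J_m(X_{\CC})$ irreducible of dimension $(m+1)\dim X$ for every $m$'' is infinitely many constructible conditions, and each one only excludes a finite set of primes that a priori depends on $m$, so you cannot conclude that for almost all (let alone, as you suggest, all) $p$ every jet scheme of $X_{\mathbb{F}_q}$ is irreducible of expected dimension. That is also not how \cite{AA18,Gla19} transfer to positive characteristic: they use $p$-adic integration and a resolution of singularities over $\mathbb{Q}$ with good reduction at almost all primes, excluding one finite set of primes once and for all; the claim that a ``direct check'' gives all $p$ is an over-claim not made (or needed) in the paper.

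The second gap concerns convergence itself. Glazer's theorem yields boundedness of $q^{-n\dim X}\cdot\sharp X(\mathbb{F}_{q}[t]/t^{n})$, not convergence, and your assertion that irreducibility of the jet schemes forces the pole of smallest modulus of the Poincar\'e series to be a \emph{simple} pole at $T=q^{-\dim X}$ is unjustified: irreducibility over $\mathbb{F}_q$ does not by itself control $\sharp J_{n-1}(X)(\mathbb{F}_q)$ uniformly in $n$, nor does it exclude other poles of the same modulus. In the paper this is exactly the content of Proposition \ref{Prop/JetCountLim}: Denef's explicit formula for the Igusa zeta function (valid after discarding primes of bad reduction for a fixed resolution) combined with Musta\c{t}\v{a}'s theorem locates the poles of largest real part at $\Rea(s)=-m$ with a pole at $s=-m$, and then Wyss's Lemma 4.7 converts this into convergence with the limit given by a residue. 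So the statement, the hypotheses you isolate (lci of expected dimension, rational singularities), and the references are all right, but the mechanism you interpose between them would fail as written; the correct move is to invoke Proposition \ref{Prop/JetCountLim} directly or reprove it along the Denef--Musta\c{t}\v{a}--Wyss lines.
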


To the best of our knowledge, classifying pairs $(Q,\dd)$ such that
$\mu_{Q,\dd}^{-1}(0)$ has rational singularities remains an open
problem.

\paragraph*{Asymptotic counts of jets and $p$-adic integrals}

We also interpret the limit of our counts of jets as a $p$-adic volume.
We take a broader perspective and consider moduli stacks of objects
in 2-Calabi-Yau categories. These moduli stacks are locally modelled
on $[\mu_{Q,\dd}^{-1}(0)/\GL(\dd)]$, by work of Davison \cite{Dav21a}.
This makes our total negativity assumption on quivers natural from
a moduli-theoretic point of view: it translates to a homological assumption
on the category we consider, which we call total negativity as well
(see also \cite{DHSM22}). We also require that the locus of simple
objects in the associated moduli space be dense, to ensure that the
local models $\mu_{Q,\dd}^{-1}(0)$ are built from quivers satisfying
property (P).

As before, for counts of jets on moduli stacks to converge, we need
to analyze singularities of those stacks (or their atlases, see Remark
\ref{Rmk/CountJetsStacks}). Our main result implies rational singularities
statements for moduli stacks of totally negative 2-Calabi-Yau categories.
The moduli stacks we consider are quotient stacks of the form $[X/G]$,
where $X$ is a scheme acted on by a linear algebraic group $G$.
The associated moduli spaces are good categorical quotients $X\git G$
in the sense of Geometric Invariant Theory \cite[Ch. 6.]{Dol03}.
It was shown for several moduli stacks of this form that $X\git G$
has rational singularities (more specifically symplectic singularities,
as introduced in \cite{Bea00}). In fewer cases, it was also shown
that $X$ has rational singularities, which implies by a theorem of
Boutot (see \cite[Corollaire]{Bou87}) that $X\git G$ also has rational
singularities. Examples of such moduli spaces include quiver varieties
\cite{BS21,Bud21}, character varieties \cite{Bud21,BZ20}, multiplicative
quiver varieties \cite{ST22,KS23a} and moduli of sheaves on K3 surfaces
\cite{KLS06,AS18,BZ19}. These moduli spaces all parametrize objects
of a 2-Calabi-Yau category, in a precise (differential-graded) sense,
which was formalized by Davison in \cite{Dav21a}. Davison shows,
using a formality argument, that these moduli spaces are étale-locally
modelled on moment maps of quivers. Our theorem goes as follows (see
Theorem \ref{Thm/RatSgTotNeg2CY} for a more precise statement):

\begin{thm} \label{Thm/RatSgTotNeg2CYIntro}

Let $[X/G]$ be a quotient stack parametrizing objects in a totally
negative subcategory of a 2-Calabi-Yau category. Suppose that $X\git G$
contains a dense open subset parametrizing simple objects. Then $X$
is locally complete intersection and has rational singularities.

\end{thm}

Let us give some examples. For representation varieties of fundamental
groups of compact Riemann surfaces, a rational singularities result
was already proved in \cite{Bud21}, as well as for their De Rham
and Dolbeault analogs. We discuss in details rational singularities
for moduli of sheaves on K3 surfaces, as mentioned in the introduction
of \cite{BZ19}. Finally, we prove the following result for representations
spaces of multiplicative preprojective algebras $R(\Lambda^{q}(Q),\dd)$,
thereby dealing with the last example in Davison's list (see \cite[\S1.1.1.]{Dav21a}):

\begin{cor} \label{Cor/MultPreprojIntro}

Let $Q$ be a quiver, $q\in(\mathbb{C}^{\times})^{Q_{0}}$ and $\dd\in\mathbb{N}^{Q_{0}}\setminus\{0\}$
such that $(Q,\dd)$ has property (P). Then $R(\Lambda^{q}(Q),\dd)$
has rational singularities.

\end{cor}

With these results at hand, we can extend our results on counts of
jets to these moduli stacks. If $[X/G]$ is a quotient stack parametrizing
objects in a totally negative 2-Calabi-Yau category, we show that
the sequence $q^{-n\dim X_{\mathbb{Q}}}\cdot\sharp X(\mathbb{F}_{q}[t]/(t^{n})),\ n\geq1$
also converges when $n$ goes to infinity. We interpret the limit
as the $p$-adic volume of an analytic manifold $X^{\natural}$ associated
to $X$, following recent work of Carocci, Orecchia and Wyss \cite{COW24}
(see Theorem \ref{Thm/CanMeas2CYMod} for a more precise statement).
The proof relies on a local description of $X$ as a fiber of a flat
map $\varphi$, whose fibers all have rational singularities, and
a Fubini theorem by Aizenbud and Avni (see \cite{AA16}), applied
to $\varphi$.

\begin{thm} \label{Thm/CanMeas2CYModIntro}

Let $[X/G]$ be a quotient stack parametrizing objects in a totally
negative 2-Calabi-Yau category. Suppose that $X\git G$ contains a
dense open subset parametrizing simple objects.

Then for $p$ large enough, the canonical measure $\mu_{\mathrm{can}}$
on $X^{\natural}$ - introduced in \cite{COW24} - is well-defined.
Moreover, the sequence $q^{-n\dim X_{\mathbb{Q}}}\cdot\sharp X(\mathbb{F}_{q}[t]/t^{n}),\ n\geq1$
converges and its limit is given by:\[
\underset{n\rightarrow +\infty}{\lim}\frac{\sharp X(\mathbb{F}_{q}[t]/(t^{n}))}{q^{n\dim X_{\mathbb{Q}}}}
=
\mu_{\mathrm{can}}(X^{\natural}).
\]

\end{thm}

In \cite{COW24}, the authors recover BPS invariants of some local
surfaces as $p$-adic integrals on a good moduli space of coherent
sheaves. This moduli space is analogous to $X\git G$ in our notation,
instead of $X$. As Kac's polynomials can be interpreted as BPS invariants
of a certain 3-Calabi-Yau category \cite{Moz11a}, we expect a connection
between the counts of jets studied in this paper and BPS invariants
of quivers.

In Sections \ref{Subsect/QuivRep} and \ref{Subsect/MomMap}, we recall
well-known facts on quiver representations and the geometry of moment
maps. In Section \ref{Subsect/MustCrit}, we introduce l.c.i. singularities,
rational singularities and the criterion by Musta\c{t}\v{a} that
we use to prove our main result. In Section \ref{Subsect/EtaleSlices},
we discuss the stratification of $\mu_{Q,\dd}^{-1}(0)$ mentioned
above and how it interacts with rational singularities. In Section
\ref{Subsect/Mod2CY}, we briefly introduce the other moduli spaces
that we deal with in this paper, following \cite{Dav21a}. In Section
\ref{Subsect/RatSgJets}, we discuss arithmetic consequences of rational
singularities, which initially motivated our work in the context of
quiver representations.

Section \ref{Sect/MainRes} is the heart of the paper, where we prove
Theorem \ref{Thm/MainResIntro}. Finally, we apply our main result
in Section \ref{Sect/App2CY} to prove Theorem \ref{Thm/RatSgTotNeg2CYIntro},
Corollary \ref{Cor/JetCountMomMap}, and Theorem \ref{Thm/CanMeas2CYModIntro}.

\paragraph*{Acknowledgements}

I would like to warmly thank Dimitri Wyss for his guidance and supervision
and Nero Budur for helpful discussions and answering all my questions
on his previous works. I would also like to thank Francesca Carocci,
Ben Davison, Lucien Hennecart and Olivier Schiffmann for helpful remarks
and discussions during the writing of this paper. Finally, I would
like to thank the anonymous referees for their careful reading and
suggesting improvements in the exposition. This work was supported
by the Swiss National Science Foundation {[}No. 196960{]}. This project
has also received funding from the European Union\textquoteright s
Horizon 2020 research and innovation programme under the Marie Sk\l odowska-Curie
Grant Agreement No. 101034413.

\paragraph*{Published version}

The version of record of this article, first published in Transformation
Groups, is available online at Publisher\textquoteright s website:
\url{https://doi.org/10.1007/s00031-024-09873-0}

\section{Preliminaries}

\subsection{Quiver representations and their moduli \protect\label{Subsect/QuivRep}}

We collect here elementary notions on quiver representations and their
moduli (see for instance \cite{Rei08a}). A quiver is the datum $Q=(Q_{0},Q_{1},s,t)$
of a set of vertices $Q_{0}$ and a set of arrows $Q_{1}$, along
with source and target maps $s,t:Q_{1}\rightarrow Q_{0}$. Note that
a quiver may have loops and parallel arrows connecting the same vertices.

Let us fix $\KK$ a base field. A representation $V$ of $Q$ is the
datum of $\KK$-vector spaces $(V_{i})_{i\in Q_{0}}$ and $\KK$-linear
maps $(V_{a}:V_{s(a)}\rightarrow V_{t(a)})_{a\in Q_{1}}$. A morphism
between two representations $V$ and $W$ is the datum of $\KK$-linear
maps $(\varphi_{i}:V_{i}\rightarrow W_{i})_{i\in Q_{0}}$ such that,
for all arrows $a\in Q_{1}$, $W_{a}\circ\varphi_{s(a)}=\varphi_{t(a)}\circ V_{a}$.
We will only consider finite-dimensional representations of $Q$ and
define the dimension vector of $V$ as the tuple $\dim(V)=(\dim V_{i})_{i\in Q_{0}}\in\mathbb{N}^{Q_{0}}$.
Finite-dimensional representations of $Q$ over $\KK$ form an abelian
category $\Rep_{\KK}(Q)$.

Given a dimension $\dd\in\mathbb{N}^{Q_{0}}$, we can fix bases of
the vector spaces $V_{i},\ i\in Q_{0}$ and obtain from the linear
maps $V_{a},\ a\in Q_{1}$ a tuple of matrices:\[
(x_a)_{a_\in Q_1}\in R(Q,\dd):=\prod_{a\in Q_1}\Mat(d_{t(a)}\times d_{s(a)},\KK).
\]Conversely, a point $x\in R(Q,\dd)$ yields a $\dd$-dimensional representation
of $Q$. Two points $x,y\in R(Q,\dd)$ correspond to isomorphic representations
if, and only if, they lie in the same orbit of the following action
of $\GL(\dd)=\prod_{i}\GL(d_{i},\KK)$:\[
\begin{array}{cccll}
\GL(\dd) & \times & R(Q,\dd) & \rightarrow & R(Q,\dd) \\
(g_i)_{i\in Q_0} & ; & (x_a)_{a\in Q_1} & \mapsto & (g_{t(a)}x_ag_{s(a)}^{-1})_{a\in Q_1}.
\end{array}
\]The above $\GL(\dd)$-variety is the starting point of quiver moduli.
Assume for simplicity that $\KK$ is algebraically closed, of characteristic
zero. Moduli spaces of quiver representations were constructed by
King \cite{Kin94} using techniques of Geometric Invariant Theory.
The GIT quotient $R(Q,\dd)\git\GL(\dd)$ parametrizes closed orbits
of $R(Q,\dd)$ under the action of $\GL(\dd)$. King further shows
that closed orbits correspond precisely to semisimple representations
of $Q$. Recall that a representation $V$ is called simple if it
contains no non-zero, proper subrepresentation; more generally, $V$
is called semisimple if it decomposes into a direct sum of simple
representations.

We introduce a few more notions which we will use below to define
the class of pairs $(Q,\dd)$ we focus on in this article.

\begin{df}

The Euler form of $Q$ is the following bilinear form:\[
\begin{array}{ccll}
\langle\bullet,\bullet\rangle: & \mathbb{Z}^{Q_0}\times\mathbb{Z}^{Q_0} & \rightarrow & \mathbb{Z} \\
& (\dd,\ee) & \mapsto & \sum_{i\in Q_0}d_ie_i-\sum_{a\in Q_1}d_{s(a)}e_{t(a)}.
\end{array}
\]The symmetrized Euler form is defined by: $(\dd,\ee)=\langle\dd,\ee\rangle+\langle\ee,\dd\rangle$.

\end{df}

\begin{df}

A quiver is called totally negative if $(\dd,\ee)<0$ for all $\dd,\ee\in\mathbb{N}^{Q_{0}}\setminus\{0\}$.

\end{df}

One can check that $Q$ is totally negative if, and only if, (i) $Q$
has at least two loops at each vertex and (ii) any pair of vertices
of $Q$ is joined by at least one arrow. We will use the following
definition in Section \ref{Subsect/MomMap} to describe the geometry
of moment maps of totally negative quivers. We denote by $(\epsilon_{i})_{i\in Q_{0}}$
the canonical basis of $\mathbb{Z}^{Q_{0}}$. 

\begin{df}

The fundamental domain of $Q$ is the following set of dimension vectors:\[
F_{Q}:=\left\{\dd\in\mathbb{N}^{Q_{0}}\setminus\{0\}\  
\left\vert
\begin{array}{l}
\forall i\in Q_0,\ (\dd,\epsilon_i)\leq0 \\
\supp(\dd)\text{ is connected}
\end{array}
\right.
\right\},
\]where $\supp(\dd)$ is the full subquiver of $Q$ with set of vertices
$\{i\in Q_{0}\ \vert\ d_{i}\ne0\}$.

\end{df}

Note that, for a totally negative quiver, $F_{Q}=\mathbb{N}^{Q_{0}}\setminus\{0\}$.

\subsection{Geometry of quiver moment maps \protect\label{Subsect/MomMap}}

In this section, we introduce moment maps associated to quivers and
recall some of their geometric properties, assuming $\KK$ is algebraically
closed of characteristic zero (see also \cite{CB01}). Given a quiver
$Q$, we define its double quiver $\overline{Q}$ by adjoining one
arrow $a^{*}:t(a)\rightarrow s(a)$ to $Q$ for each $a\in Q_{1}$.
Thus $\overline{Q_{1}}=Q_{1}\sqcup Q_{1}^{*}$ and we call $Q^{*}=(Q_{0},Q_{1}^{*},s,t)$
the opposite quiver of $Q$.

The moment map of $Q$ is the following morphism of algebraic varieties:\[
\begin{array}{rcccll}
\mu_{Q,\dd}: & R(Q,\dd) & \times & R(Q^*,\dd) & \rightarrow & \mathfrak{gl}(\dd) \\
 & (x_a)_{a\in Q_1} & ; & (y_a)_{a\in Q_1} & \mapsto & \left(\sum_{\substack{a\in Q_1 \\ t(a)=i}}x_ay_a-\sum_{\substack{a\in Q_1 \\ s(a)=i}}y_ax_a\right)_{i\in Q_0}.
\end{array}
\]We will be interested in the fiber of $\mu_{Q,\dd}$ over $0\in\mathfrak{gl}(\dd)$.
The subscheme $\mu_{Q,\dd}^{-1}(0)$ is invariant under the action
of $\GL(\dd)$ and its orbits correspond to isomorphism classes of
modules over the preprojective algebra $\Pi_{Q}$. We refer to \cite{CB99a}
for a definition of $\Pi_{Q}$, as we will only work with moduli of
$\Pi_{Q}$-modules in this paper, as described above.

Just as for $R(Q,\dd)$, one can form a GIT quotient $\mu_{Q,\dd}^{-1}(0)\git\GL(\dd)$,
whose points parametrize semisimple representations of $\overline{Q}$
whose orbit lie in $\mu_{Q,\dd}^{-1}(0)$, i.e. semisimple $\Pi_{Q}$-modules.
We will be interested in pairs $(Q,\dd)$ such that there exists a
simple $\Pi_{Q}$-module of dimension $\dd$. These were characterized
combinatorially by Crawley-Boevey in \cite{CB01}. Let us now introduce
the class of pairs we focus on in this article.

\begin{df} \label{Def/Prop(P)}

Let $Q$ be a quiver and $\dd\in\mathbb{N}^{Q_{0}}\setminus\{0\}$
a dimension vector. The pair $(Q,\dd)$ has property (P) if:
\begin{enumerate}
\item the quiver $Q$ is totally negative;
\item if $\supp(\dd)$ has two vertices joined by only one edge, then $\dd_{\restriction\supp(\dd)}\ne(1,1)$.
\end{enumerate}
\end{df}

We now show the existence of a simple $\Pi_{Q}$-module for all these
pairs:

\begin{prop} \label{Prop/TotNegSimp}

Suppose that $(Q,\dd)$ has property (P). Then there exists a simple
$\Pi_{Q}$-module of dimension $\dd$.

\end{prop}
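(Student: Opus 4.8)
The plan is to invoke Crawley-Boevey's combinatorial criterion for the existence of a simple $\Pi_Q$-module of dimension $\dd$, which states that such a module exists if and only if $\dd$ is a positive root of $Q$ and the strict inequality
\[
p(\dd) > \sum_{i=1}^r p(\dd^{(i)})
\]
holds for every nontrivial decomposition $\dd = \dd^{(1)} + \cdots + \dd^{(r)}$ into positive roots with $r \geq 2$, where $p(\ee) = 1 - \langle \ee, \ee \rangle$. So the first step is to recall this criterion and the relevant notions (positive roots, the function $p$). The second step is to exploit total negativity: since $(\ee, \ee) < 0$ for every $\ee \in \mathbb{N}^{Q_0} \setminus \{0\}$, one has $p(\ee) = 1 - \tfrac12(\ee,\ee) \geq 1$ for all such $\ee$, and moreover every $\ee \in \mathbb{N}^{Q_0}\setminus\{0\}$ is a positive (imaginary) root because it lies in the fundamental domain $F_Q = \mathbb{N}^{Q_0}\setminus\{0\}$ with connected support. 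This already makes $\dd$ a positive root and reduces the problem to checking the strict superadditivity of $p$.

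For the main inequality, I would compute the defect directly. Given $\dd = \sum_{i=1}^r \dd^{(i)}$, expand using bilinearity of the symmetrized Euler form:
\[
p(\dd) - \sum_{i=1}^r p(\dd^{(i)}) = (1 - r) - \tfrac12\Big((\dd,\dd) - \sum_i (\dd^{(i)},\dd^{(i)})\Big) = (1-r) - \sum_{i<j} (\dd^{(i)}, \dd^{(j)}).
\]
Since each cross term $(\dd^{(i)},\dd^{(j)})$ is at most $-1$ (indeed $\leq -2$ when both summands are supported at a common vertex, by the two-loops condition, but $\leq -1$ always by total negativity), the sum $\sum_{i<j}(\dd^{(i)},\dd^{(j)})$ is at most $-\binom{r}{2}$, so the defect is at least $(1-r) + \binom{r}{2} = \binom{r-1}{2} \geq 0$, with equality only when $r = 2$ and $(\dd^{(1)}, \dd^{(2)}) = -1$. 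Thus for $r \geq 3$ the strict inequality is automatic, and the only way the criterion can fail is a two-term decomposition $\dd = \dd^{(1)} + \dd^{(2)}$ with $(\dd^{(1)},\dd^{(2)}) = -1$.

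The last step — which is the real content and the one place where condition (2) of property (P) is used — is to analyze exactly when $(\dd^{(1)},\dd^{(2)}) = -1$ can occur. Since for a totally negative quiver any two vertices $i \neq j$ contribute $(\epsilon_i,\epsilon_j) \leq -1$ and any single vertex contributes $(\epsilon_i,\epsilon_i) \leq -4$, a cross term equal to $-1$ forces $\dd^{(1)}$ and $\dd^{(2)}$ to have disjoint supports, each support a single vertex, those two vertices joined by exactly one arrow, and $\dd^{(1)} = \epsilon_i$, $\dd^{(2)} = \epsilon_j$; hence $\dd = \epsilon_i + \epsilon_j$ and $\dd_{\restriction \supp(\dd)} = (1,1)$ with the two vertices joined by a single edge — precisely the case excluded by property (P). I expect this case analysis to be the main obstacle, mostly in being careful that "joined by only one edge" in the double quiver picture matches the bound on $(\epsilon_i,\epsilon_j)$; once that bookkeeping is done, property (P) rules out the sole bad decomposition, Crawley-Boevey's criterion applies, and the proposition follows.
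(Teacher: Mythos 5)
Your proposal is correct, but it takes a genuinely different route from the paper. The paper's proof invokes Crawley-Boevey's Theorem 8.1 of \cite{CB01a}, which classifies the dimension vectors in the fundamental region that do \emph{not} admit a simple $\Pi_{Q}$-module (multiples $m\delta$ of an extended Dynkin root, or a support split by a single arrow with multiplicity one at the bridging vertices), and then rules out each case using the two-loops condition and condition (2) of property (P). You instead apply the root-theoretic criterion (Theorem 1.2 of the same paper): since $Q$ is totally negative its underlying graph is complete, so every nonzero dimension vector lies in the fundamental region and is a positive root, and you verify the strict superadditivity of $p$ directly via the defect formula $p(\dd)-\sum_{i}p(\dd^{(i)})=(1-r)-\sum_{i<j}(\dd^{(i)},\dd^{(j)})$, using $(\dd^{(i)},\dd^{(j)})\le-1$ to reduce the only possible failure to a two-term decomposition $\dd=\epsilon_{i}+\epsilon_{j}$ with a single edge between $i$ and $j$, which is exactly the case excluded by property (P). The paper's argument is shorter because Theorem 8.1 has already carried out this combinatorial classification; yours is more self-contained at the level of the general criterion and makes transparent precisely where each hypothesis of (P) enters. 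One small slip: for a totally negative quiver $(\epsilon_{i},\epsilon_{i})=2-2g_{i}\le-2$, with equality when there are exactly two loops at $i$, not $\le-4$; since your case analysis only needs $(\epsilon_{i},\epsilon_{i})\le-2$ to exclude overlapping supports and to force $i\ne j$, this does not affect the validity of the argument.
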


\begin{proof}

Since $Q$ is totally negative, $\dd$ lies in the fundamental region
of $Q$. By contradiction, assume that there exist no simple $\Pi_{Q}$-modules
of dimension $\dd$. By \cite[Thm. 8.1.]{CB01}, one of the following
holds:
\begin{enumerate}
\item the subquiver $\supp(\dd)$ is an extended Dynkin quiver with minimal
imaginary root $\delta$ and $\dd=m\delta$ for some $m\geq2$;
\item the subquiver $\supp(\dd)$ splits as a disjoint union $(Q_{0})'\sqcup(Q_{0})''$
such that there is a unique arrow joining $(Q_{0})'$ and $(Q_{0})''$,
say at vertices $i'$ and $i''$, and $d_{i'}=d_{i''}=1$;
\item the subquiver $\supp(\dd)$ splits as a disjoint union $(Q_{0})'\sqcup(Q_{0})''$
such that there is a unique arrow joining $(Q_{0})'$ and $(Q_{0})''$,
say at vertices $i'$ and $i''$, $d_{i'}=1$, the restriction of
$\supp(\dd)$ to $(Q_{0})''$ is an extended Dynkin quiver with minimal
imaginary root $\delta$ and $\dd_{\restriction(Q_{0})''}=m\delta$
for some $m\geq2$.
\end{enumerate}
Cases 1. and 3. cannot happen, as $Q$ has at least two vertices at
each vertex. Since the graph underlying $Q$ is complete, case 2.
can only happen if $\supp(\dd)$ has two vertices joined by a single
edge and $\dd_{\restriction\supp(\dd)}=(1,1)$. This is ruled out
by definition of property (P). Thus we have reached a contradiction
and there exists a simple $\Pi_{Q}$-module of dimension $\dd$. \end{proof}

From Crawley-Boevey's study of the geometric properties of $\mu_{Q,\dd}^{-1}(0)$
\cite[Thm. 1.2. - Cor. 1.4.]{CB01}, we deduce :

\begin{cor} \label{Prop/GeoMomMap}

If $(Q,\dd)$ has property (P), then $\mu_{Q,\dd}^{-1}(0)$ is a reduced,
irreducible complete intersection of dimension $\dd\cdot\dd-1+2\cdot(1-\langle\dd,\dd\rangle)$.
Moreover, $\mu_{Q,\dd}^{-1}(0)\git\GL(\dd)$ has dimension $2\cdot(1-\langle\dd,\dd\rangle)$.

\end{cor}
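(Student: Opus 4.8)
The plan is to derive Corollary \ref{Prop/GeoMomMap} from Proposition \ref{Prop/TotNegSimp} together with the structural results of Crawley-Boevey \cite[Thm. 1.2--3.]{CB01a}. First I would recall the general lower bound: for any quiver $Q$ and any $\dd$, every irreducible component of $\mu_{Q,\dd}^{-1}(0)$ has dimension at least $\dd\cdot\dd - 1 + 2(1-\langle\dd,\dd\rangle)$, since $\mu_{Q,\dd}$ is a map from the affine space $R(\overline{Q},\dd)$ of dimension $\dd\cdot\dd + 2(\dd\cdot\dd - \langle\dd,\dd\rangle)$ into $\mathfrak{gl}(\dd)$, whose image lies in the traceless part $\mathfrak{sl}(\dd)/(\text{something})$ — more precisely the fiber is cut out by $\dd\cdot\dd - 1$ equations (the $0$-fiber being contained in the subspace where the sum of traces vanishes), so every component has dimension $\geq \dd\cdot\dd + 2(\dd\cdot\dd - \langle\dd,\dd\rangle) - (\dd\cdot\dd - 1) = \dd\cdot\dd - 1 + 2(1-\langle\dd,\dd\rangle)$. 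The content of Crawley-Boevey's theorem is that, under the hypothesis that $\dd$ lies in the fundamental region and that a simple $\Pi_Q$-module of dimension $\dd$ exists (equivalently, that $p(\dd) > \sum p(\dd^{(i)})$ for every nontrivial decomposition), this bound is attained, the fiber is a complete intersection, and it is in fact reduced and irreducible.

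So the key steps are: (1) invoke Proposition \ref{Prop/TotNegSimp} to know a simple $\Pi_Q$-module of dimension $\dd$ exists; (2) observe that total negativity forces $\dd \in F_Q = \mathbb{N}^{Q_0}\setminus\{0\}$, as already noted after the definition of $F_Q$; (3) quote \cite[Thm. 1.2]{CB01a}, which states that when there is a simple representation of $\Pi_Q$ of dimension $\dd$, the fiber $\mu_{Q,\dd}^{-1}(0)$ is a reduced and irreducible complete intersection of the expected dimension $\dd\cdot\dd - 1 + 2(1 - \langle\dd,\dd\rangle)$; (4) quote \cite[Thm. 1.3]{CB01a} (or the dimension formula for the GIT quotient, which counts the moduli of semisimple $\Pi_Q$-modules) to get that $\mu_{Q,\dd}^{-1}(0)\git\GL(\dd)$ has dimension $2(1 - \langle\dd,\dd\rangle)$; this last number is twice $p(\dd)$, the dimension of the space of simple $\Pi_Q$-modules of dimension $\dd$ being $2p(\dd)$ where $p(\dd) = 1 - \langle\dd,\dd\rangle$ in the symmetric setting.

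I expect there is essentially no obstacle here, since the statement is an immediate specialization of Crawley-Boevey's results once the existence of a simple module is in hand — the only thing to be careful about is matching conventions: Crawley-Boevey works with the symmetrized Euler form and the quantity $p(\dd) = 1 - \langle\dd,\dd\rangle$ (so that $\dd\cdot\dd - 1 + 2(1-\langle\dd,\dd\rangle) = \dd\cdot\dd - 1 + 2p(\dd)$), and one must check that the dimension of $R(\overline{Q},\dd)$ minus the number of moment map equations indeed yields this, and that the ``$-1$'' correction (from the global relation $\sum_i \operatorname{tr}\mu_i = 0$ and the fact that the scalars $\KK^\times \subset \GL(\dd)$ act trivially) is accounted for consistently in both the fiber dimension and the quotient dimension. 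Once conventions are aligned, the proof is a one-line citation.

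\begin{proof}

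Since $Q$ is totally negative, $F_Q = \mathbb{N}^{Q_0}\setminus\{0\}$, so $\dd \in F_Q$. By Proposition \ref{Prop/TotNegSimp}, there exists a simple $\Pi_Q$-module of dimension $\dd$. The statements then follow directly from \cite[Thm. 1.2--3.]{CB01a}: the existence of a simple $\Pi_Q$-module of dimension $\dd$ implies that $\mu_{Q,\dd}^{-1}(0)$ is a reduced and irreducible complete intersection, necessarily of the expected dimension
\[
\dim R(\overline{Q},\dd) - (\dd\cdot\dd - 1) = \dd\cdot\dd - 1 + 2\cdot(1 - \langle\dd,\dd\rangle),
\]
and that the GIT quotient $\mu_{Q,\dd}^{-1}(0)\git\GL(\dd)$, which parametrizes semisimple $\Pi_Q$-modules of dimension $\dd$, has dimension $2\cdot(1-\langle\dd,\dd\rangle)$.

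\end{proof}
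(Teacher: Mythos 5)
Your proof is correct and follows essentially the same route as the paper, which likewise deduces the corollary directly from Proposition \ref{Prop/TotNegSimp} together with \cite[Thm. 1.2--3.]{CB01a}. One small slip in your preliminary discussion: $\dim R(\overline{Q},\dd)=2(\dd\cdot\dd-\langle\dd,\dd\rangle)$, not $\dd\cdot\dd+2(\dd\cdot\dd-\langle\dd,\dd\rangle)$, but the dimension count in your final proof is stated correctly.
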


We prove our main result using dimension estimates on constructible
subsets of $\mu_{Q,\dd}^{-1}(0)$. These rely on older estimates proved
by Crawley-Boevey in \cite[\S 6]{CB03a}. for which we need some additional
notations:

\begin{df}{\cite[\S 1]{CB01}}

Let $M$ be a semisimple $\Pi_{Q}$-module ($Q$ arbitrary), which
decomposes into non-isomorphic direct summands as follows:\[
M\simeq\bigoplus_{i=1}^rM_i^{\oplus e_i}.
\]The semisimple type of $M$ is the multiset $(\dim(M_{i}),e_{i}\ ;\ 1\leq i\leq r)$.

\end{df}

We call $\tau$ simple (resp. strictly semisimple) if it corresponds
to a simple (resp, semisimple, but not simple) representation. Similarly,
we call $x\in\mu_{Q,\dd}^{-1}(0)$ simple (resp. strictly semisimple)
if the corresponding $\Pi_{Q}$-module is. Given a dimension vector
$\dd$, we define $\tau_{\mathrm{min},\dd}:=(\epsilon_{i},d_{i}\ ;\ i\in\supp(\dd))$.
It is the semisimple type of the $\dd$-dimensional $\Pi_{Q}$-module
corresponding to $0\in\mu_{Q,\dd}^{-1}(0)$.

\begin{df}{\cite[\S 6]{CB03a}}

Let $N_{i},\ 1\leq i\leq r$ be a collection of non-isomorphic simple
$\Pi_{Q}$-modules. A $\Pi_{Q}$-module $M$ has top-type $(j_{s},m_{s},\ 1\leq s\leq h)$
if it admits a filtration $0=M_{0}\subsetneq M_{1}\subsetneq\ldots\subsetneq M_{h}=M$,
such that $M_{s}/M_{s-1}\simeq N_{j_{s}}^{\oplus m_{s}}$ and $\hom(M_{s},N_{j_{s}})=m_{s}>0$.

\end{df}

Note that, if $M$ is semisimple with top-type $(j_{s},m_{s},\ 1\leq s\leq h)$
w.r.t. $N_{i},\ 1\leq i\leq r$ and all $N_{i}$ appear as subquotients
of $M$\footnote{This can always be assumed, by possibly forgetting some $N_{i}$.},
then the semisimple type of $M$ is:\[
\tau=\left(\dim(N_i),\sum_{j_s=i}m_s\ ;\ 1\leq i\leq r\right).
\]We can now state Crawley-Boevey's dimension bound:

\begin{prop}{\cite[Lem. 6.2.]{CB03a}} \label{Prop/CBDimBound}

Let $N_{i},\ 1\leq i\leq r$ be a collection of non-isomorphic simple
$\Pi_{Q}$-modules and $(j_{s},m_{s},\ 1\leq s\leq h)$ a top-type.
Define:\[
z_s
=
\left\{
\begin{array}{ll}
0 & \text{ if }\langle\dim(N_{j_s}),\dim(N_{j_s})\rangle=1\text{ or if there is no }t<s\text{ such that } j_t=j_s, \\
m_t & \text{ otherwise, for the largest } t<s\text{ such that } j_t=j_s.
\end{array}
\right.
\]Then the subset of $\mu_{Q,\dd}^{-1}(0)$ corresponding to $\Pi_{Q}$-modules
of top-type $(j_{s},m_{s},\ 1\leq s\leq h)$ is constructible, of
dimension at most:\[
\dd\cdot\dd-1+(1-\langle\dd,\dd\rangle)+\sum_{s=1}^hm_sz_s-\sum_{s=1}^hm_s^2\cdot(1-\langle\dim(N_{j_s}),\dim(N_{j_s})\rangle).
\]\end{prop}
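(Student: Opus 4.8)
The plan is to bound $\dim V_{\underline t}$, where $V_{\underline t}\subseteq\mu_{Q,\dd}^{-1}(0)$ is the stratum of modules of the prescribed top-type, by an induction on the filtration length, realising such modules as iterated extensions. For $0\le s\le h$ set $\dd^{(s)}:=\sum_{t\le s}m_t\dim N_{j_t}$ and let $V_{\underline t}^{(s)}\subseteq\mu_{Q,\dd^{(s)}}^{-1}(0)$ be the locally closed set of $\Pi_Q$-modules of dimension $\dd^{(s)}$ and top-type $(j_t,m_t\,;\,1\le t\le s)$; thus $V_{\underline t}^{(0)}=\{0\}$ and $V_{\underline t}^{(h)}=V_{\underline t}$. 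One first checks that every $M\in V_{\underline t}^{(s)}$ has a canonical submodule $M'$ — the intersection of the kernels of all maps $M\to N_{j_s}$ — which lies in $V_{\underline t}^{(s-1)}$ and satisfies $M/M'\cong N_{j_s}^{\oplus m_s}$. The inductive step compares $\dim V_{\underline t}^{(s)}$ with $\dim V_{\underline t}^{(s-1)}$ through the incidence variety of pairs $(M,M')$ of this kind, rigidified by a $\GL(\dd^{(s-1)})$-bundle of linear identifications of $M'$ with a fixed graded space $\KK^{\dd^{(s-1)}}$ so that the forgetful map to $V_{\underline t}^{(s-1)}$ becomes well defined.

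Over a fixed $M'\in V_{\underline t}^{(s-1)}$, the fibre of this map is built from: the Grassmannian of graded subspaces of dimension $\dd^{(s-1)}$ inside $\KK^{\dd^{(s)}}$; the choice of identification, whose dimension cancels the bundle normalisation; the $\GL$-orbit of $N_{j_s}^{\oplus m_s}$ on the quotient, of dimension $m_s^2\,(\dim N_{j_s}\cdot\dim N_{j_s})-m_s^2$ since simple modules are bricks; and the affine space of off-diagonal gluing data compatible with the moment map relation, which is a space of $1$-cocycles of dimension $\dim\Ext^1_{\Pi_Q}(N_{j_s}^{\oplus m_s},M')+m_s\,(\dim N_{j_s}\cdot\dd^{(s-1)})-\dim\Hom_{\Pi_Q}(N_{j_s}^{\oplus m_s},M')$. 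Summing these, then rewriting $\dim\Ext^1_{\Pi_Q}(N_{j_s},M')$ via the Hom--Ext reciprocity for preprojective algebras, $\dim\Ext^1_{\Pi_Q}(A,B)=\dim\Hom_{\Pi_Q}(A,B)+\dim\Hom_{\Pi_Q}(B,A)-(\dim A,\dim B)$, makes the purely bilinear contributions telescope along $s$, and after the induction one is left with
\[
\dim V_{\underline t}\ \le\ \dd\cdot\dd-1+(1-\langle\dd,\dd\rangle)-\sum_{s=1}^{h}m_s^2\bigl(1-\langle\dim N_{j_s},\dim N_{j_s}\rangle\bigr)+\sum_{s=1}^{h}m_s\dim\Hom_{\Pi_Q}(M_{s-1},N_{j_s}),
\]
where $M_{s-1}$ is the canonical submodule of dimension $\dd^{(s-1)}$.

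It remains — and this is the step I expect to be the crux — to show $\dim\Hom_{\Pi_Q}(M_{s-1},N_{j_s})\le z_s$, which is exactly where the top-type hypothesis $\hom(M_s,N_{j_s})=m_s$ must be exploited. Applying $\Hom_{\Pi_Q}(-,N_{j_s})$ to $0\to M_{s-1}\to M_s\to N_{j_s}^{\oplus m_s}\to0$ and using that $N_{j_s}$ is a brick, this hypothesis forces the connecting homomorphism $\Hom_{\Pi_Q}(M_{s-1},N_{j_s})\hookrightarrow\Ext^1_{\Pi_Q}(N_{j_s}^{\oplus m_s},N_{j_s})$ to be injective. If $\langle\dim N_{j_s},\dim N_{j_s}\rangle=1$ the target vanishes — the simple $N_{j_s}$ is rigid — so $\Hom_{\Pi_Q}(M_{s-1},N_{j_s})=0$; if $N_{j_s}$ does not occur among $N_{j_1},\dots,N_{j_{s-1}}$ then it is not a composition factor of $M_{s-1}$ and again the Hom vanishes; and otherwise, propagating $\Hom_{\Pi_Q}(-,N_{j_s})$ up the filtration from the largest index $t_0<s$ with $j_{t_0}=j_s$ and using the top-type conditions at each intermediate stage gives $\dim\Hom_{\Pi_Q}(M_{s-1},N_{j_s})\le m_{t_0}=z_s$. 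Substituting these bounds gives the claim. The chief difficulties are thus bookkeeping: keeping the incidence-variety dimension count honest — in particular the cancellation of the $\GL(\dd^{(s-1)})$-factor against the Grassmannian — and verifying the Hom--Ext reciprocity input, which reflects the (suitably twisted) $2$-Calabi--Yau property of the preprojective algebra.
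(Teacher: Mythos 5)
The paper itself gives no proof of this statement --- it is imported directly from Crawley-Boevey \cite{CB01b}, Lem.~6.2 --- and your reconstruction is essentially Crawley-Boevey's original argument: stratify by the canonical filtration, count parameters for one extension step, and telescope. The computations check out: the top condition $\hom(M_s,N_{j_s})=m_s$ does force every map $M_s\to N_{j_s}$ to kill $M_{s-1}$, so the canonical submodule is well defined; the fibre count (Grassmannian, orbit of $N_{j_s}^{\oplus m_s}$, cocycle space of gluings) combined with $\ext^1(A,B)=\hom(A,B)+\hom(B,A)-(\dim A,\dim B)$ telescopes to exactly the stated bound with $z_s$ replaced by $\hom(M_{s-1},N_{j_s})$; and your long-exact-sequence arguments give $\hom(M_{s-1},N_{j_s})\le z_s$ in all three cases (in the last case plain left-exactness along the steps $t_0<t\le s-1$ already suffices, together with $\hom(M_{t_0},N_{j_{t_0}})=m_{t_0}$). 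Two points should be made explicit: the fibre estimate must be taken only over the image of the forgetful map, since the inequality $\hom(y,N_{j_s})\le z_s$ uses that $y$ actually occurs as the canonical submodule of a module satisfying the stage-$s$ top condition and fails for arbitrary $y$ of top-type $(j_t,m_t;\ t<s)$ --- your wording does restrict to canonical submodules, but the sup should be stated that way; and the Hom--Ext reciprocity you invoke is Crawley-Boevey's general formula for finite-dimensional $\Pi_Q$-modules, valid for every quiver (including Dynkin ones, where $\Pi_Q$ is not $2$-Calabi--Yau), which matters because the proposition is stated for arbitrary $Q$.
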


\subsection{Local complete intersection and rational singularities \protect\label{Subsect/MustCrit}}

In this section, we recall results on local complete intersection
and rational singularities, assuming $\KK$ is algebraically closed.
In particular, we state a criterion of Musta\c{t}\v{a} \cite{Mus01}
for a local complete intersection to have rational singularities,
in terms of jet spaces.

We begin with some definitions. Let $X/\KK$ be a scheme of finite
type.

\begin{df}

The scheme $X$ is locally complete intersection (or l.c.i. for short)
if it can be covered by affine open subsets which are complete intersections
in some affine space. In that case, we say that $X$ has l.c.i. singularities.

\end{df}

Having l.c.i. singularities is a property of local rings: $X$ is
locally complete intersection if, and only if, for all $x\in X$,
$\mathcal{O}_{X,x}$ is a complete intersection ring. Moreover, this
is a local property for the smooth topology:

\begin{prop}{\cite[\href{https://stacks.math.columbia.edu/tag/069P}{Tag 069P}]{SP}}
\label{Lem/LciSgDesc}

Let $f:X\rightarrow Y$ be a smooth morphism between schemes of finite
type. If $Y$ has l.c.i. singularities, then $X$ has l.c.i. singularities.
The converse holds if $f$ is surjective.

\end{prop}

We now turn to rational singularities. We further assume that $\KK$
is of characteristic zero.

\begin{df}

The scheme $X$ has rational singularities if for some (hence for
all) resolution of singularities $p:\tilde{X}\rightarrow X$, the
natural morphism $\mathcal{O}_{X}\rightarrow\textbf{R}p_{*}\mathcal{O}_{\tilde{X}}$
is an isomorphism. In other words, the canonical morphism $\mathcal{O}_{X}\rightarrow p_{*}\mathcal{O}_{\tilde{X}}$
is an isomorphism and $\textbf{R}^{i}p_{*}\mathcal{O}_{\tilde{X}}=0$
for all $i>0$. A point $x\in X$ is called a rational singularity
if there exists a Zariski-open neighborhood $U\ni x$ which has rational
singularities.

\end{df}

We will also use a relative notion of rational singularities, introduced
by Aizenbud and Avni \cite{AA16}, in our proof of Theorem \ref{Thm/CanMeas2CYModIntro}.

\begin{df}{\cite[Def. II.]{AA16}}

Let $f:X\rightarrow Y$ be a morphism between smooth, irreducible
varieties over $\KK$. The morphism $f$ is called FRS (flat with
rational singularities) if it is flat and for every $y\in Y(\overline{\KK})$,
the fibre $X\times_{Y}y$ has rational singularities.

\end{df}

If $X$ has rational singularities, then it is normal (hence reduced)
and Cohen-Macaulay - see \cite[II.1.]{Elk78}. While the above definition
might seem quite abstract, one can show directly that having rational
singularities is a local property with respect to smooth morphisms.
This is surely common knowledge for the experts, but we include a
proof for the reader's convenience, as we could not find one in the
literature.

\begin{lem} \label{Lem/RatSgDesc}

Let $f:X\rightarrow Y$ be a smooth morphism between schemes of finite
type. If $Y$ has rational singularities, then $X$ has rational singularities.
The converse holds if $f$ is surjective.

\end{lem}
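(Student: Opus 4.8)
The plan is to reduce both directions to the local structure of smooth morphisms and to the behaviour of rational singularities under products with affine space. First I would record the standard fact that a smooth morphism is, locally on the source, a composition of an étale morphism with a projection $\mathbb{A}^n_Y \to Y$; this is the local structure theorem for smooth morphisms. So it suffices to treat two special cases: (a) $f$ étale, and (b) $f$ the projection $X = Y \times \mathbb{A}^n \to Y$. Since rational singularities is a local property (on both $X$ and $Y$), and étale morphisms induce isomorphisms on completed local rings — hence on the relevant cohomology of a resolution pulled back étale-locally — case (a) follows in both directions from the fact that rational singularities can be checked formally-locally (or: pull back a resolution of $Y$ along the étale map to get a resolution of $X$, and use flat base change for $Rp_*\mathcal{O}$). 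For case (b), if $p : \widetilde{Y} \to Y$ is a resolution, then $p \times \mathrm{id} : \widetilde{Y}\times \mathbb{A}^n \to Y \times \mathbb{A}^n$ is a resolution, and by flat base change along $Y\times\mathbb{A}^n \to Y$ one has $\mathbf{R}^i(p\times\mathrm{id})_*\mathcal{O}_{\widetilde{Y}\times\mathbb{A}^n} = (\mathbf{R}^i p_*\mathcal{O}_{\widetilde{Y}}) \boxtimes \mathcal{O}_{\mathbb{A}^n}$, so these vanish for $i>0$ exactly when the corresponding sheaves on $Y$ do, and the map $\mathcal{O}\to p_*\mathcal{O}$ is an isomorphism for $Y\times\mathbb{A}^n$ iff it is for $Y$.

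For the forward direction (if $Y$ has rational singularities then so does $X$) this already suffices: combining cases (a) and (b), any point of $X$ has a neighbourhood admitting an étale map to some $Y'\times\mathbb{A}^n$ with $Y'$ an open of $Y$, and rational singularities propagates through both. For the converse, assuming $f$ surjective, I would argue that every point $y\in Y$ lies in the image of some $x\in X$; choosing an affine-smooth neighbourhood of $x$ as above gives an étale map $g : U \to Y'\times\mathbb{A}^n$ with $U\ni x$ having rational singularities, hence $Y'\times\mathbb{A}^n$ has rational singularities in a neighbourhood of $g(x)$, hence (by case (b), run backwards — here one needs that rational singularities of a product $Z\times\mathbb{A}^n$ at a point of $Z\times\{0\}$ forces rational singularities of $Z$ at the corresponding point, which is again flat base change) $Y'$, hence $Y$, has rational singularities near $y$. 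Surjectivity is used precisely to ensure every point of $Y$ is reached.

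The main obstacle I anticipate is bookkeeping the ``local on source vs.\ local on target'' subtleties cleanly: rational singularities is defined via a global resolution, so to run the étale and product arguments one must first confirm it is genuinely a local (even formal-local) condition, and that resolutions behave well under the flat base changes involved (étale maps and projections from affine space are flat, so $\mathbf{R}^i p_*$ commutes with them, and a resolution pulls back to a resolution since smoothness of the total space and properness are preserved). Once these compatibilities are in hand, the étale and $\mathbb{A}^n$-cases are routine, and assembling them via the local structure theorem for smooth morphisms completes both implications.
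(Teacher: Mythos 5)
Your proposal is correct, but it takes a different route from the paper. The paper argues globally and in one step: given a resolution $p_{Y}:\tilde{Y}\rightarrow Y$, it forms $\tilde{X}=X\times_{Y}\tilde{Y}$, observes that smoothness of $f$ makes $p_{X}:\tilde{X}\rightarrow X$ a resolution, and then uses flat base change $f^{*}\textbf{R}p_{Y,*}\mathcal{O}_{\tilde{Y}}\simeq\textbf{R}p_{X,*}\mathcal{O}_{\tilde{X}}$ to get the forward implication; for the converse it uses exactness of $f^{*}$ and fpqc descent along the (faithfully flat, since smooth and surjective) morphism $f$ to descend the isomorphism $\mathcal{O}_{Y}\simeq p_{Y,*}\mathcal{O}_{\tilde{Y}}$ and the vanishing of the higher direct images. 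You instead invoke the local structure theorem for smooth morphisms and reduce to the \'etale and $Y\times\mathbb{A}^{n}$ cases separately, handling each by flat base change and a pointwise faithfully-flat descent of stalk vanishing. Both are sound; the paper's version is shorter because it never needs the local structure theorem, applies descent only once, and sidesteps the ``local on source versus local on target'' bookkeeping you flag. One caution on your version: in the converse, your \'etale-case descent should not lean on ``rational singularities can be checked formally-locally'' as a citation-free step --- that is a genuinely nontrivial theorem (Elkik), and the paper in fact deduces the \'etale-local nature of rational singularities \emph{from} this lemma, so quoting it wholesale risks circularity in this context. The alternative you mention (pull back the resolution along the \'etale map, flat base change, then descend along the surjective \'etale map onto the open image by faithful flatness of the local ring maps) is the right fix, and it is exactly the paper's mechanism applied piecewise; once you phrase it that way, your decomposition works but buys nothing over the direct global argument.
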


\begin{proof}

Consider $p_{Y}:\tilde{Y}\rightarrow Y$ a resolution of $Y$ and
$\tilde{X}:=X\times_{Y}\tilde{Y}$. Then, since $f$ is smooth, the
canonical morphism $p_{X}:\tilde{X}\rightarrow X$ is also a resolution
of singularities, so that we get the following cartesian diagram,
where horizontal maps are resolutions of singularities and the vertical
maps are smooth (hence flat):\[
\begin{tikzcd}[ampersand replacement = \&]
\tilde{X}\ar[r, "p_X"]\ar[d, "\tilde{f}"] \& X \ar[d, "f"] \\
\tilde{Y}\ar[r, "p_Y"] \& Y
\end{tikzcd}
\]Suppose that $Y$ has rational singularities. Then flat base change
yields:\[
f^{*}\textbf{R}(p_{Y})_*\mathcal{O}_{\tilde{Y}}\simeq\textbf{R}(p_{X})_*\tilde{f^{*}}\mathcal{O}_{\tilde{Y}}\simeq\textbf{R}(p_{X})_*\mathcal{O}_{\tilde{X}}.
\]Since by assumption $\textbf{R}(p_{Y})_{*}\mathcal{O}_{\tilde{Y}}\simeq\mathcal{O}_{Y}$,
we obtain $\mathcal{O}_{X}\simeq f^{*}\mathcal{O}_{Y}\simeq\textbf{R}(p_{X})_{*}\mathcal{O}_{\tilde{X}}$,
hence $X$ has rational singularities.

Conversely, suppose that $X$ has rational singularities and $f$
is surjective. Then flat base change and the rational singularities
assumption for $X$ yield:\[
f^{*}\textbf{R}(p_{Y})_*\mathcal{O}_{\tilde{Y}}\simeq\textbf{R}(p_{X})_*\tilde{f^{*}}\mathcal{O}_{\tilde{Y}}\simeq\textbf{R}(p_{X})_*\mathcal{O}_{\tilde{X}}\simeq\mathcal{O}_{X}\simeq f^{*}\mathcal{O}_{Y}.
\]Since $f^{*}$ is exact, we obtain taking cohomology sheaves:\[
f^*\textbf{R}^{i}(p_Y)_{*}\mathcal{O}_{\tilde{Y}}\simeq
\left\{
\begin{array}{ll}
f^*\mathcal{O}_{Y} & \text{, if }i=0 \ ; \\
0 & \text{, else.}
\end{array}
\right.
\]By fpqc descent ($f$ is surjective), we finally obtain:\[
\textbf{R}^{i}(p_Y)_{*}\mathcal{O}_{\tilde{Y}}\simeq
\left\{
\begin{array}{ll}
\mathcal{O}_{Y} & \text{, if }i=0 \ ; \\
0 & \text{, else.}
\end{array}
\right.
\]Thus $Y$ has rational singularities, which finishes the proof. \end{proof}

In particular, the above lemma shows that having rational singularities
is an étale-local property. We will use this fact many times in Section
\ref{Subsect/EtaleSlices} in order to transfer rational singularities
from $\mu_{Q,\dd}^{-1}(0)$ to its étale slices.

Finally, we recall the definition of jet spaces. These spaces are
strongly related to singularities of algebraic varieties, via motivic
integration (and as we will see in Section \ref{Subsect/RatSgJets},
p-adic integration). See for example \cite{Mus01,Mus02,ELM04}.

\begin{df}{\cite[\S 3.2.]{CLNS18}}

Let $\KK$ be a field (of any characteristic) and $m\geq0$. Let $X$
be a finite-type $\KK$-scheme. The $m$-th jet scheme of $X$ is
the $\KK$-scheme $X_{m}$ representing the following functor of points:\[
\begin{array}{rcl}
\KK-\mathrm{CAlg} & \rightarrow & \mathrm{Sets} \\
R & \mapsto & X(R[t]/(t^{m+1})).
\end{array}
\]

\end{df}

We now state a criterion by Musta\c{t}\v{a}, which characterizes
rational singularities for locally complete intersection varieties.
This criterion is key to Budur's proof of rational singularities for
moment maps of g-loop quivers.

\begin{prop}{\cite[Thm. 0.1. \& Prop. 1.4.]{Mus01}} \label{Prop/MustCrit}

Let $X$ be a locally complete intersection variety. Let $X_{\sg}$
be its singular locus and $\pi_{m}:X_{m}\rightarrow X$ its $m$-th
jet space. Then $X$ has rational singularities if, and only if, for
all $m\geq1$, $\dim\pi_{m}^{-1}(X_{\sg})<(m+1)\cdot\dim(X)$.

\end{prop}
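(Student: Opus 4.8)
\emph{Overall strategy.} This is Musta\c{t}\v{a}'s theorem, and I would prove it by translating the jet-scheme inequality into a statement about discrepancies on a log resolution, using motivic integration (Denef--Loeser). Write $n=\dim X$ and, for simplicity, assume $X$ irreducible (the equidimensional case follows componentwise, the pairwise intersections of components lying in $X_{\sg}$ and being controlled by the same estimates). \emph{First I would reduce to irreducibility of jet schemes.} Locally embed $X=V(g_{1},\dots ,g_{c})\subseteq\mathbb{A}^{N}$ with $c=N-n$; then $X_{m}\subseteq\mathbb{A}^{N(m+1)}$ is cut out by $c(m+1)$ equations, so every irreducible component of $X_{m}$ has dimension $\geq (m+1)n$. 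Over $X_{\sm}$ the map $\pi_{m}$ is a Zariski-locally trivial $\mathbb{A}^{mn}$-bundle, so $\overline{\pi_{m}^{-1}(X_{\sm})}$ is one irreducible component of $X_{m}$, of dimension exactly $(m+1)n$; and since every smooth point of $X_{m}$ lies on a unique component, every \emph{other} component is contained in $\pi_{m}^{-1}(X_{\sg})$ and still has dimension $\geq (m+1)n$. Hence $\dim\pi_{m}^{-1}(X_{\sg})<(m+1)n$ for all $m\geq 1$ if and only if $X_{m}$ is irreducible for all $m\geq 1$, and it remains to prove that this holds if and only if $X$ has rational singularities.

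\emph{Reduction to a discrepancy condition.} Since $X$ is lci it is Gorenstein, so $K_{X}$ is Cartier. If $X_{1}$ is irreducible then $X_{\sg}$ has no codimension-one component: a codimension-one stratum $Z\subseteq X_{\sg}$ forces the Jacobian matrix of $(g_{i})$ to drop rank by $\geq 1$ generically along $Z$, hence $\dim\pi_{1}^{-1}(Z)\geq (n-1)+(n+1)=2n$ and a second component of $X_{1}$; together with Cohen--Macaulayness, Serre's criterion then makes $X$ normal. As a non-normal variety has no rational singularities, I may assume $X$ normal, and fix a log resolution $f:Y\to X$ which is an isomorphism over $X_{\sm}$, with simple normal crossings exceptional divisor $\bigcup_{i\in S}E_{i}$ and relative canonical divisor $K_{Y/X}=\sum_{i\in S}a_{i}E_{i}$. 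By a classical theorem (Elkik, Flenner), a normal lci variety has rational singularities if and only if it has canonical singularities, that is, if and only if $a_{i}\geq 0$ for all $i$. The goal is now: $a_{i}\geq 0$ for all $i$ $\iff$ $X_{m}$ irreducible for all $m$.

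\emph{The motivic computation.} Pass to the arc space $X_{\infty}=\varprojlim_{m}X_{m}$ with truncations $\psi_{m}:X_{\infty}\to X_{m}$, and consider $f_{\infty}:Y_{\infty}\to X_{\infty}$. Because $X$ is lci and Gorenstein, the Jacobian of $f$ entering the change of variables formula is exactly the effective divisor $K_{Y/X}=\sum a_{i}E_{i}$; this is where the local complete intersection hypothesis genuinely enters. Stratifying $Y_{\infty}$ by the contact orders $\operatorname{ord}_{E_{i}}$ of arcs along the exceptional components and applying the Denef--Loeser change of variables formula (which accounts for the positive-dimensional fibers of $f_{\infty}$ over arcs meeting the exceptional locus), one computes the dimension of each cylinder $\psi_{m}^{-1}\bigl(\pi_{m}^{-1}(X_{\sg})\bigr)$ as a maximum, over the strata of $\bigcup E_{i}$ and over admissible contact-order vectors $\nu$, of expressions of the form $(m+1)n-\sum_{i}\nu_{i}(a_{i}+1)$ up to bounded lower-order terms. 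From this: if every $a_{i}\geq 0$ then $\sum\nu_{i}(a_{i}+1)\geq\sum\nu_{i}\geq 1$, so $\dim\pi_{m}^{-1}(X_{\sg})<(m+1)n$ for every $m$ and $X_{m}$ is irreducible; conversely, if some $a_{i_{0}}<0$, then arcs with large contact order along $E_{i_{0}}$ produce a component of $\pi_{m}^{-1}(X_{\sg})$ of dimension $\geq (m+1)n$ for $m\gg 0$, so $X_{m}$ is reducible. This establishes the equivalence, and with the first two reductions, the theorem.

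\emph{Main obstacle.} The substance is the motivic computation of the last step: setting up the stratification of the arc space by contact orders along the exceptional divisors, running the Denef--Loeser change of variables formula carefully enough to extract the \emph{exact} dimension formula rather than a one-sided estimate (so that the slope $\min_{i}(a_{i}+1)$ of growth of $\operatorname{codim}\bigl(\pi_{m}^{-1}(X_{\sg}),X_{m}\bigr)$ is correct), and the identification of the Jacobian of the resolution with $K_{Y/X}$, which fails for general (even Gorenstein) singularities and is the true role of the lci assumption. The first two reductions are comparatively routine, though the descent to the normal case should be spelled out with care.
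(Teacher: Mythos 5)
This proposition is not proved in the paper at all: it is imported verbatim from Musta\c{t}\v{a} \cite[Thm. 0.1 \& Prop. 1.4]{Mus01}, so there is no internal argument to compare yours against; the only fair comparison is with the cited source, and your outline does follow the same strategy as that source. Your preliminary reductions are essentially correct and are exactly Musta\c{t}\v{a}'s: the Krull height bound gives $\dim Z\geq(m+1)n$ for every irreducible component $Z$ of $X_{m}$, the locus $\pi_{m}^{-1}(X_{\sm})$ is an irreducible open subset whose closure is the unique component meeting it, so the bound $\dim\pi_{m}^{-1}(X_{\sg})<(m+1)n$ for all $m$ is equivalent to irreducibility of all $X_{m}$ (this is Prop. 1.4 of the reference); the normality reduction via the tangent-space estimate at singular points of an lci plus Serre's criterion, and the equivalence rational $\Leftrightarrow$ canonical for normal Gorenstein varieties (Elkik), are also correct and are how the equivalence with discrepancies is set up.

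The genuine gap is that the entire content of the theorem sits in the paragraph you label ``the motivic computation,'' which is asserted rather than proved, and it contains a point that is not a routine verification: for a resolution $f\colon Y\to X$ of a \emph{singular} base, the Denef--Loeser transformation rule involves the order function of the Jacobian ideal of $f$ (the image of $f^{*}\Omega_{X}^{n}\to\Omega_{Y}^{n}$), and identifying its divisorial part with $K_{Y/X}$ --- together with controlling the non-divisorial contribution coming from the singularities of $X$ --- is precisely where the lci hypothesis must be exploited and where the real work lies; stating ``the Jacobian is exactly $K_{Y/X}$'' begs this question. Likewise, extracting an \emph{exact} dimension formula $(m+1)n-\sum_{i}\nu_{i}(a_{i}+1)$ for the truncated contact loci requires the stability/fiber-dimension analysis of cylinders over a singular base, and the converse direction (some $a_{i}<0$ produces a component of $\pi_{m}^{-1}(X_{\sg})$ of dimension $\geq(m+1)n$) needs a lower bound on $\dim\pi_{m}^{-1}(X_{\sg})$, i.e.\ one must exhibit enough $m$-jets (not arcs) lifting through the resolution with prescribed contact order, which your sketch does not address. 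So as written this is a correct roadmap reproducing the known proof, but not a proof: the step that distinguishes Musta\c{t}\v{a}'s theorem from its reductions is exactly the step you defer.
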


\subsection{Etale slices \protect\label{Subsect/EtaleSlices}}

In this section, we describe étale slices of $\mu_{Q,\dd}^{-1}(0)$
in terms of semisimple type, following Crawley-Boevey \cite{CB03a}
and Budur \cite{Bud21}. We also explain how one can transfer the
rational singularities property from the étale slices to $\mu_{Q,\dd}^{-1}(0)$.
This technique is at the heart of the inductive reasoning in \cite{Bud21},
which we extend to a larger class of quivers in this paper.

Let $x\in\mu_{Q,\dd}^{-1}(0)$ be a semisimple point of type $\tau=(\dd_{i},e_{i}\ ;\ 1\leq i\leq r)$.
Then its stabilizer satisfies $\GL(\dd)_{x}\simeq\GL(\ee)$. Luna's
étale slice theorem \cite{Lun73} then yields a $\GL(\ee)$-invariant,
locally closed subvariety $S\subseteq\mu_{Q,\dd}^{-1}(0)$ such that
the commutative square below is cartesian, with étale horizontal maps
(the upper horizontal map is given by the action):\[
\begin{tikzcd}[ampersand replacement = \&]
(S\times^{\GL(\dd)_x}\GL(\dd),[x,\Id]) \ar[r]\ar[d] \& (\mu_{Q,\dd}^{-1}(0),x) \ar[d] \\
(S\git\GL(\dd)_x,x) \ar[r] \& (\mu_{Q,\dd}^{-1}(0)\git\GL(\dd),x).
\end{tikzcd}
\]Moreover, by work of Crawley-Boevey \cite{CB03a} and Budur \cite{Bud21},
the étale slice has an étale-local description in terms of a certain
pair $(Q',\ee)$, which satisfies:
\begin{itemize}
\item the set of vertices is $(Q')_{0}=\{1,\ldots,r\}$ and $\ee_{i}:=e_{i}$,
\item the double quiver $\overline{Q'}$ has $2\cdot(1-\langle\dd_{i},\dd_{i}\rangle)$
loops at each vertex $i$ and $-(\dd_{i},\dd_{j})$ arrows from vertex
$i$ to vertex $j$.
\end{itemize}
We call such a quiver an auxiliary quiver attached to semisimple type
$\tau$. Note that $Q'$ is only determined by $\tau$ up to orientation.
In what follows, we will abuse notations and denote by $Q_{\tau}$
any choice of $Q'$, as both $\overline{Q_{\tau}}$ and property (P)
for $Q_{\tau}$ do not depend on orientation.

Recall that for a $G$-variety $X$ with quotient map $q:X\rightarrow X\git G$
($G$ is a reductive group), an open subset $U\subseteq X$ is called
$G$-saturated if $q^{-1}(q(U))=U$. Then the following holds:

\begin{prop}{\cite[\S 4.]{CB03a} \cite[Thm. 2.9.]{Bud21}} \label{Prop/MomMapEtSlice}

There exists a $\GL(\dd)_{x}$-saturated open neighborhood $W\subseteq S$
of $x$ and a $\GL(\ee)$-equivariant\footnote{Using $\GL(\dd)_{x}\simeq\GL(\ee)$.}
morphism $f:(W,x)\rightarrow(\mu_{Q_{\tau},\ee}^{-1}(0),0)$ such
that the commutative diagram below has cartesian squares and étale
horizontal maps:\[
\begin{tikzcd}[ampersand replacement = \&]
(\mu_{Q_{\tau},\ee}^{-1}(0)\times^{\GL(\ee)}\GL(\dd),[0,\Id]) \ar[d] \& (W\times^{\GL(\dd)_x}\GL(\dd),[x,\Id]) \ar[l]\ar[r]\ar[d] \& (\mu_{Q,\dd}^{-1}(0),x) \ar[d] \\
(\mu_{Q_{\tau},\ee}^{-1}(0)\git\GL(\ee),0) \& (W\git\GL(\ee),x) \ar[l]\ar[r] \& (\mu_{Q,\dd}^{-1}(0)\git\GL(\dd),x).
\end{tikzcd}
\]\end{prop}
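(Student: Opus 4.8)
The plan is to build the étale slice $S$ from Luna's slice theorem and then identify it, shrunk to a suitable saturated open, with the zero-fiber of the moment map of the auxiliary quiver $(Q_\tau,\ee)$. First I would recall the setup: $x\in\mu_{Q,\dd}^{-1}(0)$ corresponds to a semisimple $\Pi_Q$-module $M\simeq\bigoplus_{i=1}^r M_i^{\oplus e_i}$, so $\GL(\dd)_x\simeq\GL(\ee)$ with $\ee=(e_i)$, and the $\GL(\dd)$-orbit of $x$ is closed. Luna's étale slice theorem \cite{Lun73} applies to the affine $\GL(\dd)$-variety $\mu_{Q,\dd}^{-1}(0)$ at the point $x$ with closed orbit, producing a $\GL(\dd)_x$-invariant locally closed affine subvariety $S\ni x$ such that the map $S\times^{\GL(\dd)_x}\GL(\dd)\to\mu_{Q,\dd}^{-1}(0)$ is étale onto a saturated open, inducing the right-hand cartesian square in the displayed diagram on the level of GIT quotients.

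Next I would identify $S$ near $x$ with a neighborhood in $\mu_{Q_\tau,\ee}^{-1}(0)$. The slice representation of $\GL(\dd)_x$ on $N_x:=T_x\mu_{Q,\dd}^{-1}(0)/T_x(\GL(\dd)\cdot x)$ is, by Crawley-Boevey's computation \cite{CB01b}, isomorphic to $R(\overline{Q_\tau},\ee)$ where $Q_\tau$ is exactly the auxiliary quiver described above: vertices indexed by the simple summands, with $-(\dd_i,\dd_j)=\dim\Ext^1_{\Pi_Q}(M_i,M_j)$ arrows between distinct vertices and $2(1-\langle\dd_i,\dd_i\rangle)=\dim\Ext^1_{\Pi_Q}(M_i,M_i)$ loops at vertex $i$, these dimension counts coming from the 2-Calabi-Yau property of $\Pi_Q$ together with $\dim\Hom-\dim\Ext^1+\dim\Ext^2=\langle\dd_i,\dd_j\rangle$. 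The symplectic structure on the slice — $\mu_{Q,\dd}^{-1}(0)$ carries a symplectic form away from smaller strata, and $\GL(\dd)$ acts Hamiltonianly — means that Luna's slice is itself cut out of the slice representation $R(\overline{Q_\tau},\ee)$ by a moment map for the $\GL(\ee)$-action, which by the normalization of $Q_\tau$ is precisely $\mu_{Q_\tau,\ee}$. This gives, after possibly shrinking $S$ to a $\GL(\ee)$-saturated open neighborhood $W$ of $x$, a $\GL(\ee)$-equivariant étale map $f:(W,x)\to(\mu_{Q_\tau,\ee}^{-1}(0),0)$; I would invoke that $\mu_{Q_\tau,\ee}^{-1}(0)$ near $0$ and $W$ near $x$ both have the expected dimension (here Corollary \ref{Prop/GeoMomMap} and its analog for $Q_\tau$ control things, since property (P) is inherited by $(Q_\tau,\ee)$ — this needs to be checked, but follows from the fact that $Q_\tau$ inherits at least two loops at each vertex and completeness, as the relevant $\Ext^1$'s are nonzero), so that $f$ is étale at $x$ and hence on a neighborhood, which I then take to be $W$.

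Finally I would assemble the diagram. Applying $-\times^{\GL(\ee)}\GL(\dd)$ to $f$ and passing to GIT quotients produces the left-hand square; its cartesianness is formal from the fact that $W\to W\git\GL(\ee)$ and $\mu_{Q_\tau,\ee}^{-1}(0)\to\mu_{Q_\tau,\ee}^{-1}(0)\git\GL(\ee)$ are quotient maps and $W$ is saturated. That the horizontal maps are étale follows because étaleness is preserved under the associated-bundle construction $-\times^{\GL(\ee)}\GL(\dd)$ (a $\GL(\dd)$-torsor base change) and under passing to GIT quotients along a saturated étale map — the latter is standard from Luna's work. Combining with the right-hand square already obtained from Luna's theorem completes the diagram.

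The main obstacle I anticipate is the precise identification of the Luna slice with $\mu_{Q_\tau,\ee}^{-1}(0)$ rather than merely with some symplectic reduction: one must check that the slice representation, with its natural symplectic form and $\GL(\ee)$-action, is genuinely $\GL(\ee)$-equivariantly symplectomorphic to $R(\overline{Q_\tau},\ee)$ with its standard structure, and that the restriction of $\mu_{Q,\dd}$ to the slice matches $\mu_{Q_\tau,\ee}$ up to this symplectomorphism — this is exactly the content that Crawley-Boevey \cite{CB01b} and Budur \cite{Bud21} establish, so in practice I would cite their computation of the slice and slice moment map rather than redo the Darboux-type argument, and concentrate my own verification on the bookkeeping that makes all four squares commute and on checking that $(Q_\tau,\ee)$ again satisfies the hypotheses under which the dimension formulas apply.
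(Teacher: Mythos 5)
The paper does not actually prove this statement: it is quoted verbatim from Budur \cite[Thm.\ 2.9.]{Bud21}, whose proof in turn rests on Crawley-Boevey's slice analysis in \cite{CB01b}, and your sketch follows exactly that route (Luna slice for the closed orbit, identification of the slice representation with $R(\overline{Q_\tau},\ee)$ via the 2-Calabi-Yau Ext-computation, then the assembly of the two cartesian squares), so in substance you have reconstructed the cited proof and correctly isolated its real content --- that the slice moment map can be taken to be \emph{exactly} the quadratic map $\mu_{Q_\tau,\ee}$ with no higher-order corrections --- which you defer to \cite{CB01b,Bud21}, just as the paper does. Two internal justifications in your write-up would not survive if you tried to make them self-contained: first, étaleness of $f:(W,x)\rightarrow(\mu_{Q_\tau,\ee}^{-1}(0),0)$ does not follow from the two sides having the same (expected) dimension --- these are singular varieties, and equidimensionality of source and target never implies étaleness; the étaleness \emph{is} the local-model theorem, obtained by working in the ambient smooth space $R(\overline{Q},\dd)$ and eliminating the higher-order terms of the slice moment map, not by a dimension count. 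Second, no property (P) or complete-intersection hypothesis enters here: the proposition is stated and used for an arbitrary quiver, dimension vector and semisimple type $\tau$ (the paper needs it in this generality for the stratification argument of Proposition \ref{Prop/RatSgClosedOrb}), so invoking Corollary \ref{Prop/GeoMomMap} and heredity of (P) is both unnecessary and a restriction of the statement. With those two points repaired by leaning on the citations, as you indicate you would, your outline matches the proof the paper points to.
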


Therefore analyzing singularities of $\mu_{Q,\dd}^{-1}(0)$ at closed
orbits boils down to analyzing $0\in\mu_{Q_{\tau},\ee}^{-1}(0)$.
The following result of Le Bruyn, Procesi \cite{LBP90} tells us that
there are finitely many semisimple types $\tau$ to consider and that
they come with a partial order (see also \cite[\S2.]{Bud21} for a
detailed exposition). For simplicity, we call $M(Q,\dd):=\mu_{Q,\dd}^{-1}(0)\git\GL(\dd)$.

\begin{prop}{\cite[Thm. 2.2.]{Bud21}} \label{Prop/QuotStrat}

Let $\tau$ be a semisimple type and $M(Q,\dd)_{\tau}$ the subset
of semisimple $\Pi_{Q}$-modules of type $\tau$. Then $M(Q,\dd)_{\tau}$
is locally closed and there are finitely many types $\tau$ such that
$M(Q,\dd)_{\tau}\ne\emptyset$. Moreover:\[
\overline{M(Q,\dd)_{\tau}}=\bigcup_{\tau'\leq\tau}M(Q,\dd)_{\tau'},
\]where $\tau'\leq\tau$ if, and only if, there exist semisimple points
$x',x\in\mu_{Q,\dd}^{-1}(0)$ of types $\tau',\tau$ such that $\GL(\dd)_{x}\subseteq\GL(\dd)_{x'}$.

\end{prop}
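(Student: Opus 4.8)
The statement to prove is Proposition~\ref{Prop/QuotStrat}, a result attributed to Le Bruyn--Procesi: the stratification of $M(Q,\dd)$ by semisimple type is a finite stratification by locally closed subsets, the closure of a stratum is the union of the smaller strata, and the partial order on types is governed by inclusion of stabilizers.

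\paragraph*{Proof plan.}

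The plan is to import the classical GIT picture for the reductive group $\GL(\dd)$ acting on the affine variety $\mu_{Q,\dd}^{-1}(0)$ and specialize it to semisimple $\Pi_Q$-modules. First I would recall that points of $M(Q,\dd)=\mu_{Q,\dd}^{-1}(0)\git\GL(\dd)$ are in bijection with closed $\GL(\dd)$-orbits, equivalently with isomorphism classes of semisimple $\Pi_Q$-modules of dimension $\dd$, so that the semisimple type $\tau$ is a well-defined invariant of a point of $M(Q,\dd)$. I would then invoke the Luna stratification: for a reductive group acting on an affine variety, stratifying the quotient by conjugacy class of stabilizer of the (unique closed orbit over a given point) yields finitely many locally closed strata whose closure relation refines the partial order on conjugacy classes of subgroups given by inclusion up to conjugacy. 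This is exactly the content of Le Bruyn--Procesi in the quiver setting, and the étale slice theorem (already invoked in the excerpt, Proposition~\ref{Prop/MomMapEtSlice}) provides the local model that makes each stratum locally closed: near $x$ of type $\tau$, the quotient looks étale-locally like $\mu_{Q_\tau,\ee}^{-1}(0)\git\GL(\ee)$ near $0$, and the stratum through $x$ corresponds to the most singular stratum (the point $0$, with full stabilizer $\GL(\ee)$) in that model.

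Next I would translate "conjugacy class of stabilizer" into "semisimple type". If $x$ corresponds to $M\simeq\bigoplus_i M_i^{\oplus e_i}$ with the $M_i$ pairwise non-isomorphic simples, then $\GL(\dd)_x\simeq\Aut_{\Pi_Q}(M)\simeq\prod_i\GL(e_i,\KK)$, and the way this sits inside $\GL(\dd)$ records precisely the multiset $(\dim M_i, e_i)$, i.e.\ $\tau$; conversely two semisimple modules with conjugate stabilizers (as subgroups of $\GL(\dd)$, acting on the fixed space $R(\overline Q,\dd)$) have the same type. So the Luna strata are exactly the $M(Q,\dd)_\tau$. Finiteness then follows because there are only finitely many conjugacy classes of such "block-diagonal" reductive subgroups of $\GL(\dd)$ once $\dd$ is fixed (equivalently, finitely many multisets $(\dd_i,e_i)$ with $\sum e_i\dd_i=\dd$ and each $\dd_i$ the dimension of some simple $\Pi_Q$-module), and local closedness of each stratum is the local statement above.

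Finally, for the closure relation, I would argue that $\tau'\leq\tau$ (in the stabilizer-inclusion sense) if and only if $M(Q,\dd)_{\tau'}\subseteq\overline{M(Q,\dd)_\tau}$. One direction: if a semisimple module $M'$ of type $\tau'$ is a degeneration of semisimple modules of type $\tau$ — which happens exactly when $M'$ can be obtained by a further direct-sum refinement combined with moving in families of extensions, i.e.\ when some semisimple $M$ of type $\tau$ has $\GL(\dd)_{M}\subseteq\GL(\dd)_{M'}$ — then $M'$ lies in the closure, because in the étale slice at $M'$ the generic point of the relevant locus has type $\tau$. The other direction follows from upper semicontinuity of the stabilizer dimension (and more precisely from the Luna stratification's closure property). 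I expect the main obstacle to be bookkeeping rather than conceptual: making the identification $\GL(\dd)_x\simeq\GL(\ee)$ compatible with the partial order precise, and citing the exact form of the Luna stratification (as done by Le Bruyn--Procesi for trace rings / representation varieties of quivers) that yields both finiteness and the closure formula simultaneously. Since the excerpt permits citing \cite[Thm. 2.2.]{Bud21} and \cite{LBP90}, the cleanest route is to state that the proposition is precisely \cite[Thm. 2.2.]{Bud21}, recalled here for the reader's convenience, and to sketch the étale-slice argument above for completeness.
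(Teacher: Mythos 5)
The paper does not prove this proposition at all: it is quoted verbatim as \cite[Thm.\ 2.2.]{Bud21}, going back to Le Bruyn--Procesi \cite{LBP90}, exactly as you suggest doing in your last paragraph. Your sketch (Luna stratification of the affine GIT quotient by conjugacy classes of stabilizers of closed orbits, the Le Bruyn--Procesi identification of those conjugacy classes with semisimple types, finiteness from the finitely many decompositions $\dd=\sum e_i\dd_i$, and the closure relation via \'etale slices) is the standard argument given in the cited references, so your treatment is consistent with the paper's.
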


Let $q:\mu_{Q,\dd}^{-1}(0)\rightarrow M(Q,\dd)$ be the quotient map.
Then we define $\left(\mu_{Q,\dd}^{-1}(0)\right)_{\tau}:=q^{-1}(M(Q,\dd)_{\tau})$.
We now show how to prove that $\mu_{Q,\dd}^{-1}(0)$ has rational
singularities, also at non-closed orbits.

\begin{prop} \label{Prop/RatSgClosedOrb}

Let $\tau$ be a semisimple type arising from $(Q,\dd)$. Then $\mu_{Q_{\tau},\ee}^{-1}(0)$
has rational singularities if, and only if, $\bigcup_{\tau'\geq\tau}\left(\mu_{Q,\dd}^{-1}(0)\right)_{\tau'}$
has rational singularities.

\end{prop}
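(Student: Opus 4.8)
The plan is to exploit the étale-local picture from Proposition~\ref{Prop/MomMapEtSlice} together with the fact (Lemma~\ref{Lem/RatSgDesc}) that rational singularities is an étale-local property, preserved and detected along smooth surjections. The key observation is that $\bigsqcup_{\tau'\geq\tau}\left(\mu_{Q,\dd}^{-1}(0)\right)_{\tau'}$ is precisely the preimage under $q$ of the open subset $\bigsqcup_{\tau'\geq\tau}M(Q,\dd)_{\tau'}$; indeed, by Proposition~\ref{Prop/QuotStrat} the closure of $M(Q,\dd)_\tau$ consists of strata $\tau''\leq\tau$, so the complement $\bigsqcup_{\tau'\not\geq\tau}M(Q,\dd)_{\tau'}$ is a union of such closures and hence closed, making $\bigsqcup_{\tau'\geq\tau}M(Q,\dd)_{\tau'}$ open in $M(Q,\dd)$. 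Write $U_\tau:=\bigsqcup_{\tau'\geq\tau}M(Q,\dd)_{\tau'}$, an open subvariety, and $V_\tau:=q^{-1}(U_\tau)=\bigsqcup_{\tau'\geq\tau}\left(\mu_{Q,\dd}^{-1}(0)\right)_{\tau'}$, a $\GL(\dd)$-saturated open subvariety of $\mu_{Q,\dd}^{-1}(0)$.

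First I would fix a semisimple point $x$ of type $\tau$ and apply Proposition~\ref{Prop/MomMapEtSlice}, which provides a $\GL(\dd)_x$-saturated open $W\subseteq S$ and the diagram of cartesian squares with étale horizontal maps relating $\mu_{Q_\tau,\ee}^{-1}(0)$, $W\times^{\GL(\dd)_x}\GL(\dd)$, and $\mu_{Q,\dd}^{-1}(0)$. The point $0\in\mu_{Q_\tau,\ee}^{-1}(0)$ is the unique closed orbit in its fiber, and the other orbits it touches correspond exactly to semisimple types $\tau'\geq\tau$; the étale map $W\times^{\GL(\dd)_x}\GL(\dd)\to\mu_{Q,\dd}^{-1}(0)$ restricts (after possibly shrinking $W$ to a $\GL(\dd)_x$-saturated open contained in the preimage of $V_\tau$, which is automatic near $[x,\Id]$) to an étale map onto an open subset of $V_\tau$ whose image meets every stratum $\tau'\geq\tau$. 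Symmetrically, on the $\mu_{Q_\tau,\ee}^{-1}(0)$ side the relevant open is all of (a neighborhood of $0$ in) $\mu_{Q_\tau,\ee}^{-1}(0)$, since near $0$ every nearby closed orbit has a strictly larger stabilizer, matching the condition $\tau'\geq\tau$.

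The argument then runs as follows. Since rational singularities is étale-local (Lemma~\ref{Lem/RatSgDesc} applied to étale, hence smooth, morphisms in both directions), $\mu_{Q_\tau,\ee}^{-1}(0)$ has rational singularities at $0$ if and only if $W\times^{\GL(\dd)_x}\GL(\dd)$ has rational singularities at $[0,\Id]$, if and only if (via the projection $W\times^{\GL(\dd)_x}\GL(\dd)\to W$, a smooth surjection, or directly via the étale map to $\mu_{Q,\dd}^{-1}(0)$) $\mu_{Q,\dd}^{-1}(0)$ has rational singularities at $x$. Because rational singularities is a $\GL(\dd)$-invariant property and every closed orbit in $V_\tau$ has type some $\tau'\geq\tau$, running this equivalence for each such $\tau'$ shows: $\mu_{Q,\dd}^{-1}(0)$ has rational singularities at all points of $V_\tau$ iff each $\mu_{Q_{\tau'},\mathbf{e}'}^{-1}(0)$ has rational singularities at $0$, for all $\tau'\geq\tau$. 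But by Crawley-Boevey's/Budur's combinatorics the auxiliary quiver $Q_\tau$ "dominates" the $Q_{\tau'}$ for $\tau'\geq\tau$ in the sense that each $(Q_{\tau'},\mathbf{e}')$ arises as an auxiliary pair of $(Q_\tau,\ee)$ (the deepest stratum $\tau$ corresponds to the origin of $Q_\tau$, and larger $\tau'$ correspond to proper semisimple types of $Q_\tau$), so $\mu_{Q_\tau,\ee}^{-1}(0)$ has rational singularities (globally, in particular at $0$) iff it has rational singularities at every closed orbit, iff each $\mu_{Q_{\tau'},\mathbf{e}'}^{-1}(0)$ has rational singularities at $0$. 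Chaining these equivalences gives the claim. Concretely, the clean way to package this is: prove by induction on the poset of types (well-founded since there are finitely many by Proposition~\ref{Prop/QuotStrat}) the statement ``$\mu_{Q_\tau,\ee}^{-1}(0)$ has rational singularities $\iff$ $V_\tau$ has rational singularities,'' with the étale slice diagram providing the inductive step at the closed orbit $x$ of type $\tau$, and the induction hypothesis covering the strictly larger types that appear in any punctured neighborhood.

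The main obstacle I anticipate is bookkeeping around saturatedness and openness: one must be careful that the $\GL(\dd)_x$-saturated open $W$ can be arranged inside $q_S^{-1}$ of a small enough neighborhood so that its image in $\mu_{Q,\dd}^{-1}(0)$ lands in $V_\tau$ and meets no stratum $\tau'\not\geq\tau$, and conversely that the étale slice really does see \emph{all} strata $\tau'\geq\tau$ that are needed, not just those passing through the chosen $W$. Resolving this requires invoking Proposition~\ref{Prop/QuotStrat}'s description of closures together with the fact that the quotient map $q$ is open on saturated opens and that the commuting square in Proposition~\ref{Prop/MomMapEtSlice} is cartesian, so strata match up under the étale maps on the quotient side. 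A secondary technical point is to justify that ``rational singularities at a point'' for the $\GL(\dd)$-variety $\mu_{Q,\dd}^{-1}(0)$ on an orbit is equivalent to rational singularities at the unique closed point it specializes to in the slice picture --- this is where the identification $\left(\mu_{Q,\dd}^{-1}(0)\right)_{\tau'}=q^{-1}(M(Q,\dd)_{\tau'})$ and the $\GL(\dd)_x$-equivariance of $f$ in Proposition~\ref{Prop/MomMapEtSlice} do the work, since rational singularities is constant along orbits and the étale slice through the closed orbit captures the transverse geometry.
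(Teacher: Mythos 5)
Your overall strategy is the paper's: combine the \'etale slice diagram of Proposition \ref{Prop/MomMapEtSlice} with Lemma \ref{Lem/RatSgDesc}, use the stratification of Proposition \ref{Prop/QuotStrat} and saturated neighborhoods, and exploit that all semisimple points of a fixed type share the same local model. Most of your bookkeeping (openness of $\bigsqcup_{\tau'\geq\tau}M(Q,\dd)_{\tau'}$, invariance of rational singularities along orbits, reduction to closed orbits) is correct and matches what the paper does implicitly. However, the step on which your whole chain of equivalences hinges is asserted rather than proved: the claim that the local models at the closed orbits of $\mu_{Q_{\tau},\ee}^{-1}(0)$ are exactly the pairs $(Q_{\tau'},\ee')$ for $\tau'\geq\tau$, i.e.\ that ``a slice of a slice is a slice'' and that \emph{every} semisimple type of $(Q_{\tau},\ee)$ (including those far from $0$) corresponds to a type $\tau'\geq\tau$ of $(Q,\dd)$ with the same Ext-quiver. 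You need this to pass from ``rational singularities of $\mu_{Q_{\tau},\ee}^{-1}(0)$ near $0$'' (which is all the slice at a type-$\tau$ point gives you) to rational singularities globally, and conversely. This compatibility is true, but it is not contained in the propositions you invoke, and justifying it requires an extra argument (for instance the scaling action on $\mu_{Q_{\tau},\ee}^{-1}(0)$, which shows every type occurs in any neighborhood of $0$, or a separate combinatorial identification of types); as written, it is a gap.

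The paper's proof is arranged precisely so as not to need this fact. In the forward direction it propagates inside $\mu_{Q,\dd}^{-1}(0)$ only: rational singularities of $\mu_{Q_{\tau},\ee}^{-1}(0)$ give a $\GL(\dd)$-saturated rational-singularity neighborhood of each type-$\tau$ point; since $M(Q,\dd)_{\tau}\subseteq\overline{M(Q,\dd)_{\tau'}}$ for every $\tau'\geq\tau$, such a neighborhood meets the stratum $\tau'$, so \emph{some} type-$\tau'$ point is a rational singularity, hence (same local model for all points of type $\tau'$) \emph{all} of them are, and the saturated neighborhoods cover $\bigsqcup_{\tau'\geq\tau}\left(\mu_{Q,\dd}^{-1}(0)\right)_{\tau'}$. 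For the converse it runs the identical propagation inside $\mu_{Q_{\tau},\ee}^{-1}(0)$ itself, using that the stratum of $0$ (type $\tau_{\min}$) lies in the closure of every stratum of $M(Q_{\tau},\ee)$, so rational singularities near $0$ spread to every semisimple point of $\mu_{Q_{\tau},\ee}^{-1}(0)$ and hence everywhere. If you replace your ``domination'' step by this closure-based propagation (or supply the scaling/cone argument to legitimize the slice-of-slice identification), your proof closes up.
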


\begin{proof}

We first make the following observation: any semisimple point $x\in\left(\mu_{Q,\dd}^{-1}(0)\right)_{\tau}$
has a $\GL(\dd)$-saturated open neighborhood $U_{x}$ contained in
the open subset $\bigcup_{\tau'\geq\tau}\left(\mu_{Q,\dd}^{-1}(0)\right)_{\tau'}=q^{-1}\left(\bigcup_{\tau'\geq\tau}M(Q,\dd)_{\tau'}\right)$.
By possibly shrinking it, one may assume that $U_{x}$ is contained
in the image of the étale morphism $W\times^{\GL(\dd)_{x}}\GL(\dd)\rightarrow\mu_{Q,\dd}^{-1}(0)$
from Proposition \ref{Prop/MomMapEtSlice}. Moreover, for all $\tau'\geq\tau$,
$U_{x}\cap\left(\mu_{Q,\dd}^{-1}(0)\right)_{\tau'}\ne\emptyset$.
Indeed, $q(U_{x})$ is an open neighborhood of $q(x)\in M(Q,\dd)_{\tau}$
and $M(Q,\dd)_{\tau}\subseteq\overline{M(Q,\dd)_{\tau'}}$, so $U_{x}\cap\left(\mu_{Q,\dd}^{-1}(0)\right)_{\tau'}=q^{-1}\left(q(U_{x})\cap M(Q,\dd)_{\tau'}\right)\ne\emptyset$.

Suppose that $\mu_{Q_{\tau},\ee}^{-1}(0)$ has rational singularities.
Then by Proposition \ref{Prop/MomMapEtSlice} and Lemma \ref{Lem/RatSgDesc},
for any semisimple point $x\in\left(\mu_{Q,\dd}^{-1}(0)\right)_{\tau}$,
$U_{x}$ has rational singularities. Since for $\tau'\geq\tau$, $U_{x}\cap\left(\mu_{Q,\dd}^{-1}(0)\right)_{\tau'}\ne\emptyset$,
there exists some semisimple point $x'\in\left(\mu_{Q,\dd}^{-1}(0)\right)_{\tau'}$
whose neighborhood $U_{x'}$ has rational singularities (this may
require shrinking $U_{x'}$). Since all semisimple points in $\left(\mu_{Q,\dd}^{-1}(0)\right)_{\tau'}$
are étale-locally modelled on the same auxiliary quiver $(Q_{\tau'},\ee)$,
we deduce that for \textit{all} $x'\in\left(\mu_{Q,\dd}^{-1}(0)\right)_{\tau'}$,
$U_{x'}$ has rational singularities. Finally, the open subset $\bigcup_{\tau'\geq\tau}\left(\mu_{Q,\dd}^{-1}(0)\right)_{\tau'}\subseteq\mu_{Q,\dd}^{-1}(0)$
is covered by the open neighborhoods $U_{x},\ x\in\bigcup_{\tau'\geq\tau}\left(\mu_{Q,\dd}^{-1}(0)\right)_{\tau'}$,
so it has rational singularities. The converse follows by applying
the same reasoning to $\left(\mu_{Q_{\tau},\ee}^{-1}(0)\right)_{\tau_{\min}}$,
where $\tau_{\min}$ is the semisimple type of $0\in\mu_{Q_{\tau},\ee}^{-1}(0)$.
\end{proof}

\subsection{Local models for moduli of 2-Calabi-Yau categories \protect\label{Subsect/Mod2CY}}

In this section, we gather local models of some moduli stacks of objects
of 2-Calabi-Yau categories, following \cite{Dav21a}. Such local models
were obtained separately in \cite{CB03a,Bud21}, \cite{BGV16,KS23a}
and \cite{AS18,BZ19} and united in the more general framework of
\cite{Dav21a}. In each case, local models are constructed from so-called
Ext-quivers of semisimple (or polystable) objects. The moduli stacks
we consider are also disjoint unions of global quotient stacks $[X_{\alpha}/G_{\alpha}]$,
where $X_{\alpha}$ is a scheme and $G_{\alpha}$ is a reductive group.
This gives us local models for the schemes $X_{\alpha}$, which generalize
the étale slices described above.

Let us first give a definition of 2-Calabi-Yau category which covers
our examples below. For simplicity, we use the definition for triangulated
categories given in \cite[\S2.6.]{Kel08}, although Davison works
with a refined notion of Calabi-Yau structures suited to differential-graded
enhancements of the triangulated categories we consider. As we will
see below, the coarser definition is sufficient for the computations
of Ext-quivers covered in this article. In what follows, $\KK$ denotes
a base field.

\begin{df} \label{Def/CYCat}

Let $d\in\mathbb{Z}$ and $\mathcal{T}$ a $\KK$-linear, Hom-finite,
triangulated category admitting a Serre functor as defined in \cite[\S2.6.]{Kel08}.
We say that $\mathcal{T}$ is $d$-Calabi-Yau if there exists a family
of $\KK$-linear forms $t_{X}:\Hom(X,X[d])\rightarrow\KK,\ X\in\mathcal{T}$
such that, for all $p,q\in\mathbb{Z}$ satisfying $p+q=d$ and for
all $f\in\Hom(X,Y[p])$ and $g\in\Hom(Y,X[q])$, the pairing:\[
\begin{array}{cll}
\Hom(X,Y[p])\times\Hom(Y,X[q]) & \rightarrow & \KK \\
(f,g) & \mapsto & t_X(g[p]\circ f)
\end{array}
\]is non-degenerate and $t_{X}(g[p]\circ f)=(-1)^{pq}\cdot t_{Y}(f[q]\circ g)$.

\end{df}

The triangulated categories considered in the articles are subcategories
of $D^{b}(\mathcal{C})$, where $\mathcal{C}$ is an abelian category
e.g. modules over an algebra or quasi-coherent sheaves over an algebraic
variety. In particular, if $D^{b}(\mathcal{C})$ is $2$-Calabi-Yau,
then for all $X\in\mathcal{C}$, $\Ext_{\mathcal{C}}^{1}(X,X)$ inherits
a symplectic form. Thus $\Ext_{\mathcal{C}}^{1}(X,X)$ must be even-dimensional.

One can also define a notion of Calabi-Yau algebras as in \cite[\S 3.2.]{Gin06}
and \cite[\S 7]{VdB15}. Set $A$ an algebra. Consider the category
$\mathcal{C}$ of right $A$-modules and the triangulated subcategory
$D^{b}(A)$ of $D^{b}(\mathcal{C})$ formed by complexes whose total
cohomology is finite-dimensional. By \cite[\S4.1-2.]{Kel08}, if $A$
is a $d$-Calabi-Yau algebra, then $D^{b}(A)$ is $d$-Calabi-Yau
as a triangulated category. Here is the definition:

\begin{df} \label{Def/CYAlg}

Let $A$ be a $\KK$-algebra and $d\in\mathbb{Z}$. The algebra $A$
is called $d$-Calabi-Yau if:
\begin{enumerate}
\item As an $A-A$-bimodule, $A$ admits a projective resolution of finite
length by finite-dimensional projective $A-A$-bimodules,
\item There is a quasi-isomorphism $\mathrm{RHom}(A,A^{\mathrm{op}}\otimes A)\simeq A[-d]$
of complexes of $A-A$-bimodules.
\end{enumerate}
\end{df}

We now concretely describe the moduli stacks that we deal with in
this paper. Here $\KK$ is algebraically closed, of characteristic
zero.

\subsubsection{$\Pi_{Q}$-modules}

Given a quiver $Q$, we consider the moduli stack $\mathfrak{M}_{\Pi_{Q}}$
of representations of the preprojective algebra $\Pi_{Q}$. It is
the union of the following quotient stacks:\[
\mathfrak{M}_{\Pi_Q}
=\bigsqcup_{\dd\in\mathbb{N}^{Q_0}}\mathfrak{M}_{\Pi_Q}(\dd)
=\bigsqcup_{\dd\in\mathbb{N}^{Q_0}}\left[\mu_{Q,\dd}^{-1}(0)/\GL(\dd)\right].
\]When the underlying graph of $Q$ has no Dynkin diagram of type A,
D, or E among its connected components (in particular, when $Q$ is
totally negative), the algebra $\Pi_{Q}$ is 2-Calabi-Yau \cite[\S 4.2.]{Kel08}.
Moreover:\[
\hom_{\Pi_Q}(M,N)-\ext_{\Pi_Q}^1(M,N)+\ext_{\Pi_Q}^2(M,N)=(\dim(M),\dim(N)).
\]We already saw étale-local models of $\mu_{Q,\dd}^{-1}(0)$ and GIT
quotients $M_{\Pi_{Q}}(\dd):=\mu_{Q,\dd}^{-1}(0)\git\GL(\dd)$ in
Sections \ref{Subsect/MomMap} and \ref{Subsect/EtaleSlices}. Let
us just mention how the auxiliary quivers $(Q_{\tau},\ee)$ are related
to Ext-groups when $\Pi_{Q}$ is $2$-Calabi-Yau. Given a semisimple
$\Pi_{Q}$-module $M=\bigoplus_{i=1}^{r}M_{i}^{\oplus e_{i}}$ of
type $\tau=(\dd_{i},e_{i}\ ;\ 1\leq i\leq r)$, the Ext-quiver of
$M$ is the quiver with set of vertices $\{1,\ldots,r\}$ and $\ext_{\Pi_{Q}}^{1}(M_{i},M_{j})$
arrows from vertex $i$ to vertex $j$. The identity above shows that
the Ext-quiver of $M$ is $\overline{Q_{\tau}}$.

\subsubsection{$\Lambda^{q}(Q)$-modules}

Let $Q$ be a quiver and $q\in(\KK^{\times})^{Q_{0}}$. The multiplicative
preprojective algebra $\Lambda^{q}(Q)$ was introduced by Crawley-Boevey
and Shaw in \cite{CBS06}. We refer to their article for a precise
definition and directly describe moduli of $\Lambda^{q}(Q)$-modules.
Fix a total ordering $<$ of $Q_{1}$. A $\Lambda^{q}(Q)$-module
of dimension $\dd$ is given by a collection of matrices $M_{a},M_{a}^{*},\ a\in Q_{1}$
of size $d_{t(a)}\times d_{s(a)}$ (resp. $d_{s(a)}\times d_{t(a)}$)
such that, for all $a\in Q_{1}$, $I_{d_{t(a)}}+M_{a}M_{a}^{*}$ and
$I_{d_{s(a)}}+M_{a}^{*}M_{a}$ are invertible and:\[
\prod_{\substack{a\in (Q_1,<) \\ t(a)=i}}(1+M_aM_a^*)\times\prod_{\substack{a\in (Q_1,<) \\ s(a)=i}}(1+M_a^*M_a)^{-1}=q_i\cdot I_{d_i}.
\]As in the case of additive preprojective algebras, two collections
of matrices correspond to isomorphic modules if they are conjugated
by an element of $\GL(\dd)$. Therefore, we obtain the moduli stack:\[
\mathfrak{M}_{\Lambda^{q}(Q)}
=\bigsqcup_{\dd\in\mathbb{N}^{Q_0}}\mathfrak{M}_{\Lambda^{q}(Q)}(\dd)
=\bigsqcup_{\dd\in\mathbb{N}^{Q_0}}[R(\Lambda^{q}(Q),\dd)/\GL(\dd)].
\]where $R(\Lambda^{q}(Q),\dd)\subseteq R(\overline{Q},\dd)$ is the
affine, locally closed subvariety defined by the above conditions.
Note that $\Lambda^{q}(Q)$ and $R(\Lambda^{q}(Q),\dd)$ do not depend
(up to isomorphism) on the orientation of $Q$, nor on the choice
of the ordering on $Q_{1}$ - see \cite[Thm. 1.4.]{CBS06}. Moreover,
since $R(\Lambda^{q}(Q),\dd)$ is affine, we obtain a GIT quotient
$M_{\Lambda^{q}(Q)}(\dd)=R(\Lambda^{q}(Q),\dd)\git\GL(\dd)$, which
parametrizes semisimple $\Lambda^{q}(Q)$-modules of dimension $\dd$
similarly to $M_{\Pi_{Q}}(\dd)$.

When $Q$ is connected and contains at least one (not necessarily
oriented) cycle, Kaplan and Schedler proved that $\Lambda^{q}(Q)$
is a $2$-Calabi-Yau algebra (\cite[Thm. 1.2.]{KS23a}). This is the
case, for instance, when $Q$ is totally negative. Moreover, for any
pair of $\Lambda^{q}(Q)$-modules $(M,N)$, the following identity
holds as in the additive case (see \cite[Thm. 1.6.]{CBS06}):\[
\hom_{\Lambda^{q}(Q)}(M,N)-\ext_{\Lambda^{q}(Q)}^1(M,N)+\ext_{\Lambda^{q}(Q)}^2(M,N)=(\dim(M),\dim(N)).
\]Given a semisimple $\Lambda^{q}(Q)$-module $M=\bigoplus_{i=1}^{r}M_{i}^{\oplus e_{i}}$
of type $\tau=(\dd_{i},e_{i}\ ;\ 1\leq i\leq r)$, one constructs
an auxiliary quiver and a dimension vector $(Q_{\tau},\ee)$ as in
the additive case (see Section \ref{Subsect/EtaleSlices}). Likewise,
the identity above shows that the Ext-quiver of $M$ is $\overline{Q_{\tau}}$.

\subsubsection{Semistable coherent sheaves on K3 surfaces}

Let $(S,H)$ be a complex, projective polarized K3 surface. We consider
the moduli stack $\mathfrak{M}_{S,H}$ of Gieseker $H$-semistable,
coherent sheaves on $S$, as described in \cite{HL10}. It is the
union of the following quotient stacks:\[
\mathfrak{M}_{S,H}
=\bigsqcup_{\textbf{v}\in \mathrm{H}^*(S,\mathbb{Z})}\mathfrak{M}_{S,H}(\textbf{v})
=\bigsqcup_{\textbf{v}\in \mathrm{H}^*(S,\mathbb{Z})}[U_{\textbf{v},m_{\textbf{v}}}^{\mathrm{ss}}/\GL(P(m_{\textbf{v}}))].
\]where $\mathfrak{M}_{S,H}(\textbf{v})$ is the substack of semistable
sheaves with Mukai vector $\textbf{v}$. Given a coherent sheaf $\mathcal{F}$
on $S$, its Mukai vector is by definition $\textbf{v}(\mathcal{F}):=(\rk(\mathcal{F}),c_{1}(\mathcal{F}),\frac{1}{2}c_{1}(\mathcal{F})^{2}-c_{2}(\mathcal{F})+\rk(\mathcal{F}))$
and determines the Hilbert polynomial of $\mathcal{F}$. For a given
Mukai vector $\textbf{v}$ and associated Hilbert polynomial $P\in\mathbb{Q}[t]$,
there exists an integer $m_{\textbf{v}}>0$ such that, for any Gieseker
$H$-semistable, coherent sheaf $\mathcal{F}$ on $S$ with Hilbert
polynomial $P$, the sheaf $\mathcal{F}(m_{\textbf{v}})$ is generated
by global sections (see \cite[\S4.3.]{HL10} for details). Such a
sheaf $\mathcal{F}$, together with a choice of a basis of $\Gamma(S,\mathcal{F}(m_{\textbf{v}}))$,
corresponds to a point in $\text{Quot}(\mathcal{O}_{S}(-m_{\textbf{v}})^{\oplus P(m_{\textbf{v}})},P)$.
The locus $U_{\textbf{v},m_{\textbf{v}}}\subseteq\text{Quot}(\mathcal{O}_{S}(-m_{\textbf{v}})^{\oplus P(m_{\textbf{v}})},P)$
of quotient sheaves with Mukai vector $\textbf{v}$ is an open and
closed subset (see \cite[Ch. 10.2.]{Huy16}), which is preserved under
the action of $\GL(P(m_{\textbf{v}}))$ on $\text{Quot}(\mathcal{O}_{S}(-m_{\textbf{v}})^{\oplus P(m_{\textbf{v}})},P)$.
This action admits a linearisation such that the semistable locus
$U_{\textbf{v},m_{\textbf{v}}}^{\mathrm{ss}}\subseteq\text{Quot}(\mathcal{O}_{S}(-m_{\textbf{v}})^{\oplus P(m_{\textbf{v}})},P)$
parametrizes globally generated quotients $\mathcal{O}_{S}(-m_{\textbf{v}})^{\oplus P(m_{\textbf{v}})}\twoheadrightarrow\mathcal{F}$
inducing an isomorphism on global sections. Thus $\mathfrak{M}_{S,H}(\textbf{v})\simeq[U_{\textbf{v},m_{\textbf{v}}}^{\mathrm{ss}}/\GL(P(m_{\textbf{v}}))]$.
Taking the associated GIT quotient, we obtain the moduli space $M(\textbf{v})=M_{S,H}(\textbf{v})$,
which parametrizes $H$-polystable sheaves on $S$. We denote by $M^{s}(\textbf{v})\subseteq M(\textbf{v})$
the open locus of $H$-stable sheaves.

Let us denote Mukai vectors by $\textbf{v}=(r,\textbf{c},a)\in\mathbb{Z}_{\geq0}\oplus\mathrm{NS}(S)\oplus\mathrm{H}^{4}(S,\mathbb{Z})$.
Recall that the Mukai pairing is defined by: $\textbf{v}_{1}\cdot\textbf{v}_{2}=\textbf{c}_{1}\cdot\textbf{c}_{2}-r_{1}a_{2}-r_{2}a_{1}$.
Note that $\mathrm{NS}(S)$ is a lattice (see \cite[Ch. 1.]{Huy16}).
Consider $D^{b}(S)$ the derived category formed by complexes of quasi-coherent
sheaves on $S$ with bounded coherent cohomology. Since $S$ is a
K3 surface, the category $D^{b}(S)$ is 2-Calabi-Yau, by Serre duality.
Moreover, for any pair $\mathcal{F}_{1},\mathcal{F}_{2}$ of semistable
coherent sheaves, the following identity holds \cite[Cor. 6.1.5.]{HL10}:\[
\hom(\mathcal{F}_{1},\mathcal{F}_{2})-\ext^1(\mathcal{F}_{1},\mathcal{F}_{2})+\ext^2(\mathcal{F}_{1},\mathcal{F}_{2})=-\textbf{v}_{1}\cdot\textbf{v}_{2}=-(\textbf{c}_{1}\cdot\textbf{c}_{2}-r_{1}a_{2}-r_{2}a_{1}).
\]Given a polystable sheaf $\mathcal{F}=\bigoplus_{i=1}^{r}\mathcal{F}_{i}^{\oplus e_{i}}$,
with distinct stable summands $\mathcal{F}_{i}$ of Mukai vectors
$\textbf{v}_{i}$, we construct an auxiliary quiver $Q'$ satisfying
the following conditions: $Q'_{0}=\{1,\ldots,r\}$ and the number
of arrows from $i$ to $j$ in $\overline{Q'}$ is:\[
\ext^{1}(\mathcal{F}_{i},\mathcal{F}_{j})
=
\left\{
\begin{array}{ll}
2+\textbf{v}_{i}\cdot\textbf{v}_{i}, & \text{ if } i=j, \\
\textbf{v}_{i}\cdot\textbf{v}_{j}, & \text{ if } i\ne j.
\end{array}
\right.
\]Thus, $\overline{Q'}$ is the Ext-quiver associated to $\mathcal{F}$.
Note that $\ext^{1}(\mathcal{F}_{i},\mathcal{F}_{i})$ is indeed even,
thanks to the $2$-Calabi-Yau property.

\subsubsection*{Local models}

We now describe local models of the above moduli stacks in a unified
manner, following \cite{Dav21a}. Let us first stress common features
of the aforementioned stacks. In what follows, $\mathfrak{M}$ denotes
either $\mathfrak{M}_{\Pi_{Q}}$, $\mathfrak{M}_{\Lambda^{q}(Q)}$
$\mathfrak{M}_{S,H}$ and $\mathcal{A}$ the corresponding category
of objects (the groupoid associated to $\mathcal{A}$ is isomorphic
to $\mathfrak{M}(\KK)$). Then $\mathfrak{M}$ is a union of quotient
stacks:\[
\mathfrak{M}=\bigsqcup_{\alpha}\mathfrak{M}_{\alpha}=\bigsqcup_{\alpha}[X_{\alpha}/G_{\alpha}],
\]where $X_{\alpha}$ is a finite-type $\KK$-scheme and $G_{\alpha}$
is a reductive linear algebraic group. An object $F\in\mathcal{A}$
corresponds to a point $x\in\mathfrak{M}$ and we call $\alpha(F)$
the dimension vector (resp. the Mukai vector) of $F$. If $x\in\mathfrak{M}$
is closed (i.e. the orbit of $x\in X_{\alpha}$ is closed), then $x$
corresponds to a semisimple (or polystable) object $F=\bigoplus_{i=1}^{r}F_{i}^{\oplus e_{i}}\in\mathcal{A}$.
We call $\tau(F)=(\alpha(F_{i}),e_{i}\ ;\ 1\leq i\leq r)$ the type
of $F$ (or $x$). Note that $\Aut(F)\simeq\GL(\ee)$. As explained
above, one can build from $\tau(F)$ an auxiliary quiver $Q_{\tau}$
such that $\overline{Q_{\tau}}$ is the Ext-quiver of $F$.

Another common feature of quotient stacks $\mathfrak{M}_{\alpha}=[X_{\alpha}/G_{\alpha}]\subseteq\mathfrak{M}$
is the existence of good categorical quotients $M_{\alpha}=X_{\alpha}\git G_{\alpha}$
in the sense of Geometric Invariant Theory \cite[Ch. 6.]{Dol03}.
We can stratify $M_{\alpha}$ according to the conjugacy class of
stabilizers $G_{x}\subseteq G_{\alpha},\ x\in X_{\alpha}$ (where
$x$ has a closed orbit). For a reductive subgroup $H\subseteq G_{\alpha}$,
the stratum $(M_{\alpha})_{(H)}$ is locally closed and corresponds
to closed orbits in $X_{\alpha}$ with stabilizer subgroups in the
conjugacy class of $H$. Moreover:\[
\overline{(M_{\alpha})_{(H)}}=\bigcup_{(H')\leq (H)}(M_{\alpha})_{(H')},
\]where $(H')\leq(H)$ if $H$ is conjugated to a subgroup of $H'$.
Note that the type of $x$ determines the conjugacy class of $G_{x}$.

When $X_{\alpha}$ is an affine space with a linear $G_{\alpha}$-action,
properties of this stratification were proved in \cite[Lem. 5.5.]{Sch80}.
If $X_{\alpha}$ is an affine, finite-type $G_{\alpha}$-scheme, then
there exists a closed $G_{\alpha}$-equivariant embedding of $X_{\alpha}$
into a $G_{\alpha}$-representation (see \cite[Prop. 2.3.5.]{Bri17a}),
so we obtain the stratification by restriction. In general, $X_{\alpha}\git G_{\alpha}$
is locally isomorphic to an affine GIT quotient, so we obtain the
stratification by gluing.

\begin{rmk}

In \cite{LBP90}, Le Bruyn and Procesi showed that, for a quiver $Q$
and a dimension vector $\dd$, conjugacy classes of stabilizers of
closed orbits in $R(Q,\dd)$ are in bijection with semisimple types
appearing in $R(Q,\dd)$. However, given a polystable sheaf $\mathcal{F}=\bigoplus_{i=1}^{r}\mathcal{F}_{i}^{\oplus e_{i}}$
on a K3 surface, one can only recover the Hilbert polynomials $P_{\mathcal{F}_{i}},\ 1\leq i\leq r$
from the conjugacy class of its stabilizer and not the Mukai vectors
$\mathbf{v}(\mathcal{F}_{i}),\ 1\leq i\leq r$.

\end{rmk}

The above moduli stacks all arise from a 2-Calabi-Yau category, as
shown in \cite[\S7.]{Dav21a}. Davison works with a dg-enhancement
$\mathcal{T}{}^{\mathrm{dg}}$ of a triangulated category $\mathcal{T}$
containing $\mathcal{A}$. Then $\mathcal{T}{}^{\mathrm{dg}}$ carries
a left 2-Calabi-Yau structure, as defined by Brav and Dyckerhoff \cite{BD19}.
In the examples above, $\mathcal{T}$ is a category of complexes (dg-modules
over the dg-version of $\Pi_{Q}$ or $\Lambda^{q}(Q)$, complexes
of sheaves on a K3 surface) and $\mathcal{A\subset\mathcal{T}}$ is
an abelian subcategory of the category $\mathcal{B\subset\mathcal{T}}$
of complexes concentrated in degree zero. Then $\mathfrak{M}$ sits
as an open substack in the truncation of the derived moduli stack
of objects of $\mathcal{T}{}^{\mathrm{dg}}$, defined by Toën and
Vaquié \cite{TV07}.

Although under certain assumptions, the left 2-Calabi-Yau structure
on $\mathcal{T}{}^{\mathrm{dg}}$ does make $\mathcal{T}$ a 2-Calabi-Yau
category in the sense of Definition \ref{Def/CYCat} (see for instance
\cite[\S 10.1.]{KW21}), in the examples involving quivers, $\mathcal{T}$
may differ from the derived categories of $\Pi_{Q}$ (resp. $\Lambda^{q}(Q)$).
Indeed, in thoses cases, $\mathcal{T}$ is built from the dg-versions
of $\Pi_{Q}$ (resp. $\Lambda^{q}(Q)$). Consequently, in the theorem
below, the Ext-quivers associated to semisimple $\Pi_{Q}$-modules
(resp. $\Lambda^{q}(Q)$-modules) should a priori be computed using
Hom-spaces of $\mathcal{T}$.

However, when $Q$ is totally negative, the dg-version of $\Pi_{Q}$
(resp. $\Lambda^{q}(Q)$) is quasi-isomorphic to $\Pi_{Q}$ itself
(resp. $\Lambda^{q}(Q)$) - see \cite[\S 4.2.]{Kel08} and \cite[Prop. 4.4.]{KS23a}
- so $\mathcal{T}$ is equivalent to the derived category of $\Pi_{Q}$
(resp. $\Lambda^{q}(Q)$). In the case of coherent sheaves on a K3
surface $S$, the dg-category $\mathcal{T}{}^{\mathrm{dg}}$ under
consideration is a dg-enhancement of $D^{b}(S)$ - see \cite[\S 5.2.]{BD19},
\cite[\S 7.2.5.]{Dav21a}. Therefore, in our examples, Ext-quivers
can be computed from Ext-groups in $\mathcal{A}$. For this reason,
we do not give further details and refer to \cite{Dav21a} for a more
thorough discussion of the 2-Calabi-Yau structures at play. We collect
the common features described above in the following definition:

\begin{df} \label{Def/QuotStack2CY}

We call $[X/G]$ a quotient stack coming from a 2-Calabi-Yau category
if:
\begin{enumerate}
\item The stack $[X/G]$ is an open substack of the truncation of the derived
moduli stack of objects of a dg-category $\mathcal{T}{}^{\mathrm{dg}}$
endowed with a left 2-Calabi-Yau structure,
\item Closed points of $[X/G]$ correspond to (a subclass of) semisimple
objects of an abelian, finite-length, $\KK$-linear subcategory $\mathcal{A}\subset\mathcal{T}$;
moreover, if $x\in[X/G](\KK)$ corresponds to the semisimple object
$F=\bigoplus_{i=1}^{r}F_{i}^{\oplus e_{i}}\in\mathcal{A}$, then $(F_{1},\ldots,F_{r})$
is a $\Sigma$-collection in $\mathcal{T}$ (as defined in \cite[\S 1.3.]{Dav21a}),
\item The group $G$ is reductive and $X$ has a good categorical quotient
$X\rightarrow M:=X\git G$.
\end{enumerate}
\end{df}

It follows from the assumptions above that, if $x\in[X/G](\KK)$ corresponds
to the semisimple object $F=\bigoplus_{i=1}^{r}F_{i}^{\oplus e_{i}}\in\mathcal{A}$,
then $\Aut(x)\simeq\GL(\ee)$.

We now describe local models of $X$ and $M$. In the case of $\mathfrak{M}_{S,H}(\textbf{v})$,
the theorem below follows from the formality result in \cite{BZ19}
(see also \cite[\S3-4.]{AS18} and the references therein). In the
case of $\mathfrak{M}_{\Lambda_{Q}^{q}}$, when $\Lambda^{q}(Q)$
is $2$-Calabi-Yau, a local description can be obtained from \cite[Thm. 5.12 - Thm. 5.16.]{KS23a},
using a $G$-equivariant version of Artin's approximation theorem
(see, for instance, \cite{AHR20}).

\begin{thm}{\cite[Thm. 5.11.]{Dav21a}} \label{Thm/LocMod2CY}

Let $[X/G]$ be a quotient stack coming from a 2-Calabi-Yau category
and $x\in[X/G](\KK)$ a closed point corresponding to $F\simeq\bigoplus_{i=1}^{r}F_{i}^{\oplus e_{i}}$.
Let $Q'$ be an auxiliary quiver associated to $F$ (i.e. $\overline{Q'}$
is the Ext-quiver of $F$). Then there exists an affine $\GL(\ee)$-variety
$W$, with a fixed point $w$ and a commutative diagram:\[
\begin{tikzcd}[ampersand replacement = \&]
([\mu_{Q',\ee}^{-1}(0)/\GL(\ee)],0) \ar[d] \& ([W/\GL(\ee)],w) \ar[l]\ar[r]\ar[d] \& ([X/G],x) \ar[d] \\
(M_{\Pi_{Q'},\ee},0) \& (W\git\GL(\ee),w) \ar[l]\ar[r] \& (M,x).
\end{tikzcd}
\]such that the horizontal maps are étale and the squares are cartesian.

\end{thm}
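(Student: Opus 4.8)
The plan, following \cite{Dav21a}, is to combine the derived deformation theory of $\mathcal{D}^{\mathrm{dg}}$ with a formality argument supplied by its (left) $2$-Calabi-Yau structure, and then to promote the resulting formal-local comparison to an \'etale-local one by a $\GL(\ee)$-equivariant Artin approximation. Recall from Definition \ref{Def/QuotStack2CY} that $[X/G]$ is an open substack of the truncation of the derived moduli stack $\mathfrak{M}^{\mathrm{der}}$ of objects of $\mathcal{D}^{\mathrm{dg}}$ (To\"en--Vaqui\'e \cite{TV07}). By derived deformation theory (see \cite[Appendix A]{Dav21a} and the references there), the formal completion of $\mathfrak{M}^{\mathrm{der}}$ at the point $x$ corresponding to $F\simeq\bigoplus_{i=1}^r F_i^{\oplus e_i}$ is prorepresented by the (shifted) dg-Lie algebra $\mathbf{R}\Hom_{\mathcal{D}^{\mathrm{dg}}}(F,F)$, whose degree-$0$ cohomology $\End(F)=\mathfrak{gl}(\ee)$ integrates to the automorphism group $\Aut(F)=\GL(\ee)$ because the $F_i$ are pairwise non-isomorphic simple objects of a finite-length category.

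First I compute this cohomology. Since the $F_i$ lie in the heart $\mathcal{B}$, the complex $\mathbf{R}\Hom(F,F)$ has no negative cohomology, and by $2$-Calabi-Yau duality none in degrees $\geq 3$; the same duality gives a non-degenerate pairing $\Ext^p(F_i,F_j)\times\Ext^{2-p}(F_j,F_i)\to\KK$, whence $\Ext^2(F_i,F_j)\simeq\Ext^0(F_j,F_i)^\vee=\delta_{ij}\KK$ while $\Ext^1(F_i,F_j)$ and $\Ext^1(F_j,F_i)$ are mutually dual. By construction the number of arrows $i\to j$ in $\overline{Q'}$ is $\dim\Ext^1(F_i,F_j)$, so the pairing exhibits $\overline{Q'}$ as the double of a quiver $Q'$, and $\mathbf{R}\Hom(F,F)$ has, as a graded $\GL(\ee)$-representation, cohomology $\mathfrak{gl}(\ee)$ in degrees $0$ and $2$ and $\bigoplus_{i,j}\Hom(\KK^{e_i},\KK^{e_j})\otimes\Ext^1(F_i,F_j)$ in degree $1$. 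This is exactly the cohomology of $\mathbf{R}\Hom_{\Pi_{Q'}^{\mathrm{der}}}(S,S)$, where $\Pi_{Q'}^{\mathrm{der}}$ is the derived preprojective algebra of $Q'$ -- the $2$-Calabi-Yau completion of $\KK Q'$ of Keller \cite{Kel08}, with $H^0(\Pi_{Q'}^{\mathrm{der}})=\Pi_{Q'}$ -- and $S=\bigoplus_i S_i^{\oplus e_i}$ is its semisimple module; the derived representation stack $\mathfrak{M}_{\Pi_{Q'}}^{\mathrm{der}}(\ee)$ has classical truncation $\mathfrak{M}_{\Pi_{Q'}}(\ee)=[\mu_{Q',\ee}^{-1}(0)/\GL(\ee)]$.

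The crucial step is formality. The $2$-Calabi-Yau structure on $\mathcal{D}^{\mathrm{dg}}$ upgrades $\mathbf{R}\Hom(F,F)$ to a cyclic $A_\infty$-algebra, and the formality theorem of \cite[\S5]{Dav21a} -- which exploits this cyclic structure in an essential way -- shows that it is cyclically $A_\infty$-equivalent to the model attached to the pair $(\Ext^\bullet(F,F),\langle-,-\rangle)$, which for quiver-type cohomology is precisely $\mathbf{R}\Hom_{\Pi_{Q'}^{\mathrm{der}}}(S,S)$; equivalently, the superpotential it induces is, after a formal change of coordinates, the canonical one whose derived critical locus is $\mathfrak{M}_{\Pi_{Q'}}^{\mathrm{der}}(\ee)$. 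Passing to the associated (derived) deformation functors, this produces a $\GL(\ee)$-equivariant isomorphism -- compatible with the maps to good moduli spaces -- between the formal completion of $\mathfrak{M}^{\mathrm{der}}$ at $x$ and that of $\mathfrak{M}_{\Pi_{Q'}}^{\mathrm{der}}(\ee)$ at $0$. Truncating (truncation commutes with formal completion) yields an isomorphism between the formal completions of $[X/G]$ at $x$ and of $[\mu_{Q',\ee}^{-1}(0)/\GL(\ee)]$ at $0$, as pointed formal stacks over their good moduli spaces.

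It remains to promote this to the stated \'etale diagram. As $G$ is reductive and $x$ is a closed point with stabilizer $\GL(\ee)$, the Luna-type local structure theorem for good moduli stacks (Alper--Hall--Rydh \cite{AHR20}) presents an \'etale neighborhood of $x$ in $[X/G]$ in the form $[\Spec A/\GL(\ee)]$ with the base point $\GL(\ee)$-fixed, and likewise near $0\in[\mu_{Q',\ee}^{-1}(0)/\GL(\ee)]$; a $\GL(\ee)$-equivariant Artin approximation then converts the isomorphism of formal completions into a common pointed \'etale neighborhood $[W/\GL(\ee)]\to[X/G]$ and $[W/\GL(\ee)]\to[\mu_{Q',\ee}^{-1}(0)/\GL(\ee)]$, with $W$ affine and $w$ a $\GL(\ee)$-fixed point. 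Taking good categorical quotients -- which commute with the \'etale base changes at hand -- produces the bottom row $(W\git\GL(\ee),w)\to(M,x)$ and $(W\git\GL(\ee),w)\to(M_{\Pi_{Q'},\ee},0)$ and the cartesianness of the squares. The main obstacle is the formality step: ruling out exotic higher $A_\infty$-products, so that the local model is the honest moment map and not a deformation of it; this is where the $2$-Calabi-Yau hypothesis is indispensable and constitutes the technical heart of \cite[\S5]{Dav21a}.
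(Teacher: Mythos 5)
This theorem is not proved in the paper at all: it is quoted verbatim from \cite[Thm. 5.11.]{Dav21a}, with the surrounding text only indicating the ingredients (formality coming from the 2-Calabi-Yau structure, and a $\GL(\ee)$-equivariant Artin approximation / local structure theorem as in \cite{AHR20}). Your sketch reconstructs exactly that strategy -- derived deformation theory via $\mathbf{R}\Hom(F,F)$, cyclic formality identifying the local model with the derived preprojective algebra of $Q'$, then \'etale approximation and descent to good moduli spaces -- and is correct in outline; the only point stated too casually is the last one, since cartesianness of the squares over $M$ and $M_{\Pi_{Q'},\ee}$ is not a formal consequence of ``quotients commute with \'etale base change'' but requires the \'etale maps of stacks to be strongly \'etale (stabilizer- and closed-point-preserving, then Luna's fundamental lemma / \cite{AHR20} after shrinking), which your setup does provide via the fixed point $w$ with stabilizer $\GL(\ee)$.
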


Similarly to section \ref{Subsect/EtaleSlices}, this also gives us
$\GL(\ee)$-equivariant étale morphisms $(W,w)\rightarrow(\mu_{Q',\ee}^{-1}(0),0)$
and $(W,w)\rightarrow(X,x)$ which induce:\[
\begin{tikzcd}[ampersand replacement = \&]
(\mu_{Q',\ee}^{-1}(0)\times^{\GL(\ee)}G,[0,\Id]) \ar[d] \& (W\times^{\GL(\ee)}G,[w,\Id]) \ar[l]\ar[r]\ar[d] \& (X,x) \ar[d] \\
(M_{\Pi_{Q'},\ee},0) \& (W\git\GL(\ee),w) \ar[l]\ar[r] \& (M,x).
\end{tikzcd}
\]where horizontal maps are étale and squares are cartesian.

\subsection{Rational singularities and counts of jets \protect\label{Subsect/RatSgJets}}

In this section, we shift gears and introduce arithmetic characterizations
of rational singularities, based on works of Aizenbud-Avni \cite{AA18}
and Glazer \cite{Gla19}. In order to count $\mathbb{F}_{q}$-rational
points of jet spaces, we consider a finite-type scheme $X/\mathbb{Z}$.
When $X$ is locally complete intersection, Aizenbud-Avni and Glazer
showed that the sequences $q^{-n\dim X_{\mathbb{Q}}}\cdot\sharp X(\mathbb{F}_{q}[t]/(t^{n})),\ n\geq1$
are bounded for all finite fields precisely when $X_{\bar{\mathbb{Q}}}$
has rational singularities (see \cite[Thm. 4.1.]{Gla19} for the detailed
assumptions). There is more: by studying the poles of the local Igusa
zeta function associated to an affine scheme, Wyss showed that the
sequence $q^{-n\dim X_{\mathbb{Q}}}\cdot\sharp X(\mathbb{F}_{q}[t]/(t^{n})),\ n\geq1$,
when bounded, has a limit when $n$ goes to infinity \cite[Lem. 4.7.]{Wys17b}.
We will identify this limit with a p-adic integral on an analytic
manifold associated to $X$ in Section \ref{Subsect/ArithmCor}.

Suppose $X=V(f_{1},\ldots,f_{m})\subseteq\mathbb{A}_{\mathbb{Z}}^{r}$.
The local Igusa zeta function $Z_{f}(s)$ is defined as a parametric
$p$-adic integral over $\mathbb{A}^{r}$ (see \cite{Igu00} for an
introduction). Consider $F$ a finite extension of $\mathbb{Q}_{p}$,
with valuation ring $\mathcal{O}\subset F$, maximal ideal $\mathfrak{m}\subseteq\mathcal{O}$
and residue field $\mathcal{O}/\mathfrak{m}\simeq\mathbb{F}_{q}$.
By definition:\[
Z_f(s):=\int_{\mathcal{O}^r}\Vert(f_1(x),\ldots,f_m(x))\Vert^s dx,
\]where $s\in\mathbb{C}$ and for $(y_{1},\ldots,y_{m})\in F^{m}$,
$\Vert(y_{1},\ldots,y_{m})\Vert:=\underset{1\leq j\leq m}{\max}\vert y_{j}\vert$
and $\vert\bullet\vert$ is the usual non-archimedean norm on $F$.

The local Igusa zeta function encodes the sequence $\sharp X(\mathcal{O}/\mathfrak{m}^{n}),\ n\geq1$
into a generating series:\[
\sum_{n\geq1}\sharp X(\mathcal{O}/\mathfrak{m}^{n})\cdot q^{-n(r+s)}=\frac{Z_f(s)-1}{1-q^s}.
\]Of course, in this paper, we are interested in the counts $\sharp X(\mathbb{F}_{q}[t]/(t^{n}))$
instead of $\sharp X(\mathcal{O}/\mathfrak{m}^{n})$. The following
result by Aizenbud and Avni\footnote{This result follows from transfer results for p-adic integrals, see
\cite{CL05,CL10}. Therefore, it holds for any choice of $F$, as
opposed to \cite{AA18}, where the authors only work with unramified
extensions of $\mathbb{Q}_{p}$.} tells us that, up to working in large enough characteristic, the
two counts actually coincide.

\begin{prop}{\cite[Prop. 3.0.2.]{AA18}} \label{Prop/TsfPrinc}

Let $X$ be a $\mathbb{Z}$-scheme of finite type. There is a finite
set of primes $S$ such that, for any $p\notin S$, for any $q$ power
of $p$ and for any $n\geq1$, $\sharp X(\mathbb{F}_{q}[t]/(t^{n}))=\sharp X(\mathcal{O}/\mathfrak{m}^{n})$.

\end{prop}

The local Igusa zeta function also enjoys an explicit formula as a
rational function in $T=q^{-s}$. This was proved by Denef when $r=1$
\cite{Den87} and by Veys and Zuniga-Galindo \cite{VZG08} in the
general case. The formula relies on a principalization $\pi:W\rightarrow\mathbb{A}_{\mathbb{Q}}^{r}$
of the ideal $I=(f_{1},\ldots,f_{m})\subseteq\mathbb{Q}[x_{1},\ldots,x_{r}]$
- see \cite{VZG08} for details. Then the ideal $\pi^{*}I$ defines
a divisor $\sum_{i\in T}N_{i}E_{i}$ with simple normal crossings,
where $E_{i},\ i\in T$ are the irreducible components of $\pi^{-1}(X_{\mathbb{Q}})$.
We will also need the following numerical data: $\mathrm{div}(\pi^{*}(dx_{1}\wedge\ldots\wedge dx_{r}))=\sum_{i\in T}(\nu_{i}-1)E_{i}$.
The quantity $\mathrm{lct}(\mathbb{A}_{\mathbb{Q}}^{r},X_{\mathbb{Q}})=\min_{i\in T}\left\{ \frac{\nu_{i}}{N_{i}}\right\} $
is called the log-canonical threshold of the pair $(\mathbb{A}_{\mathbb{Q}}^{r},X_{\mathbb{Q}})$.
This is an important invariant in singularity theory and it is related
to jet spaces of $X_{\mathbb{Q}}$ (see for instance \cite{Mus02}).
Finally, we refer to \cite[\S 2.]{Den87} for the notion of good reduction
of $\pi_{F}$ modulo $\mathfrak{m}$ and denote by $\overline{\pi}$,
$\overline{E_{i}}$ and $\overline{X}$ the reductions of $\pi_{F}$,
$(E_{i})_{F}$ and $X_{F}$. These are schemes and morphisms defined
over $\mathbb{F}_{q}$.

\begin{prop}{\cite[Thm. 2.4. - Thm. 3.1.]{Den87} \cite[Thm. 2.10.]{VZG08}}
\label{Prop/DenefFml}

There is a finite set of primes $S$ such that for $p\notin S$, $q$
a prime power of $p$ and $F$ any finite extension of $\mathbb{Q}_{p}$
with residue field $\mathbb{F}_{q}$, $\pi_{F}$ has good reduction
modulo $\mathfrak{m}$. In that case,\[
Z_{f}(s)=q^{-r}\cdot\sum_{J\subseteq T}c_J\prod_{j\in J}\frac{(q-1)q^{-N_js-\nu_j}}{1-q^{-N_js-\nu_j}},
\] where $c_{J}=\sharp\{a\in\overline{X}(\mathbb{F}_{q})\ \vert\ a\in\overline{E_{j}}(\mathbb{F}_{q})\Leftrightarrow j\in J\}$.

\end{prop}

In \cite{Wys17b}, Wyss interprets the limit when $n$ goes to infinity
of $q^{-n\dim X_{\mathbb{Q}}}\cdot\sharp X(\mathbb{F}_{q}[t]/(t^{n}))$
as a residue of $Z_{f}(s)$, seen as a rational fraction in $T$.
Summing up results of \cite{Wys17b,AA18,Gla19}, we obtain:

\begin{prop} \label{Prop/JetCountLim}

Set $X=V(f_{1},\ldots,f_{m})\subseteq\mathbb{A}_{\mathbb{Z}}^{r}$
as above and assume that $X_{\mathbb{\bar{\mathbb{Q}}}}$ is an equidimensional
local complete intersection, of codimension $c$ in $\mathbb{A}_{\bar{\mathbb{Q}}}^{r}$.
Then the following are equivalent:
\begin{enumerate}
\item the scheme $X_{\bar{\mathbb{Q}}}$ has rational singularities,
\item for almost all primes $p$, $q^{-n\dim X_{\mathbb{Q}}}\cdot\sharp X(\mathbb{F}_{q}[t]/(t^{n})),\ n\geq1$
converges for any finite field $\mathbb{F}_{q}$ of characteristic
$p$.
\end{enumerate}
If 1. and 2. hold, there is a finite set of primes $S$ such that
for $p\notin S$, $q$ a \textit{large enough} prime power of $p$
and $F$ any finite extension of $\mathbb{Q}_{p}$ with residue field
$\mathbb{F}_{q}$, $Z_{f}(s)$ has its poles of largest real part
at $\Rea(s)=-c$ and:\[
\underset{n\rightarrow +\infty}{\lim}\frac{\sharp X(\mathbb{F}_{q}[t]/(t^{n}))}{q^{n\dim X_{\mathbb{Q}}}}=\frac{-1}{q^c-1}\cdot\Res_{T=q^c}Z_f.
\]\end{prop}

\begin{proof}

The implication 2. $\Rightarrow$ 1. is a consequence of the implication
the implication (v) $\Rightarrow$ (iii) from \cite[Thm. 4.1.]{Gla19}.
Glazer's result involves $\sharp X(\mathcal{O}/\mathfrak{m}^{n})$
instead of $\sharp X(\mathbb{F}_{q}[t]/(t^{n}))$, where $\mathcal{O}$
is the valuation ring of the unramified extension of $\mathbb{Q}_{p}$
with residue field $\mathcal{O}/\mathfrak{m}\simeq\mathbb{F}_{q}$.
However, for a given $X$, these counts are equal for almost all characteristics
by Proposition \ref{Prop/TsfPrinc}.

Conversely, suppose 1. Then by implication (iii) $\Rightarrow$ (iv')
from \cite[Thm. 4.1.]{Gla19}, for almost all primes $p$, $q^{-n\dim X_{\mathbb{Q}}}\cdot\sharp X(\mathbb{F}_{q}[t]/(t^{n})),\ n\geq1$
is bounded for any finite field $\mathbb{F}_{q}$ of characteristic
$p$. Note that we used again Proposition \ref{Prop/TsfPrinc} in
order to replace $\sharp X(\mathcal{O}/\mathfrak{m}^{n})$ by $\sharp X(\mathbb{F}_{q}[t]/(t^{n}))$
in the conclusion of Glazer's theorem.

Moreover, since $X_{\bar{\mathbb{Q}}}$ has rational singularities,
$Z_{f}$ has its poles of largest real part at $\text{Re}(s)=-\mathrm{lct}(\mathbb{A}_{\mathbb{Q}}^{r},X_{\mathbb{Q}})=-c$.
The first equality is due to \cite[Thm. 2.7.]{VZG08}, while the second
follows from \cite[Prop. 1.4.]{Mus01} and \cite[Cor. 0.2.]{Mus02}.
Note that the proof of \cite[Thm. 2.7.]{VZG08} requires the existence
of a generic $F$-point on a divisor $(E_{i})_{F}$ satisfying $\mathrm{lct}(\mathbb{A}_{\mathbb{Q}}^{r},X_{\mathbb{Q}})=\frac{\nu_{i}}{N_{i}}$
(see also \cite[Rmk. 2.8.2.]{VZG08}). We can assume this is true
by taking a finite extension of $F$, which is why we require that
$q$ be a large enough power of $p$. Then we can apply \cite[Lem. 4.7.]{Wys17b}
and conclude that $q^{-n\dim X_{\mathbb{Q}}}\cdot\sharp X(\mathbb{F}_{q}[t]/(t^{n})),\ n\geq1$
converges and that its limit is given by the residue formula above.
\end{proof}

\begin{rmk}

The proof by Aizenbud and Avni \cite[Thm. 3.0.3.]{AA18} of the implication
1. $\Rightarrow$ 2. in Proposition \ref{Prop/JetCountLim} relies
on the Lang-Weil estimates, Denef's explicit formula for $Z_{f}(s)$
and a characterisation by Musta\c{t}\u{a} of top-dimensional irreducible
components of jet schemes of $X_{\bar{\mathbb{Q}}}$ in terms of the
quantities $(N_{i},\nu_{i})$ \cite[Thm. 3.2.]{Mus01} (when $X_{\bar{\mathbb{Q}}}$
is irreducible). None of these require that $X_{\mathbb{\bar{\mathbb{Q}}}}$
be l.c.i. For l.c.i. varieties, the irreducibility of jet schemes
is implied by the fact that $X$ has rational singularities \cite[Thm. 3.3.]{Mus01}.
But irreducibility of jet schemes also follows directly from the assumption
of Proposition \ref{Prop/MustCrit} on the dimension of jet schemes
over the singular locus of $X_{\bar{\mathbb{Q}}}$. Hence, even if
$X_{\mathbb{\bar{\mathbb{Q}}}}$ is not l.c.i., if $\dim\pi_{m}^{-1}(X_{\sg})<(m+1)\cdot\dim(X)$
for all $m\geq1$, then the sequence $q^{-n\dim X_{\mathbb{Q}}}\cdot\sharp X(\mathbb{F}_{q}[t]/(t^{n})),\ n\geq1$
converges.

Instead, our motivation for working with locally complete intersection
varieties has to do with the fact that the above properties of jet
schemes become étale-local under this assumption (by Proposition \ref{Lem/RatSgDesc},
since they correspond to rational singularities). Moreover, locally
complete intersection varieties are the appropriate setting to compute
counts of jets as p-adic integrals (see the proof of Theorem \ref{Thm/CanMeas2CYModIntro}
below).

\end{rmk}

As a consequence of the counts of jets carried out in \cite{Wys17b},
we obtain rational singularities for certain fibers of quiver moment
maps. This result is necessary to complete the proof of Theorem \ref{Thm/MainResIntro},
as explained in Remark \ref{Rmk/BoundFailure}. When $X=\mu_{Q,\dd}^{-1}(0)$
and $\dd=\underline{1}$, Wyss gave a criterion for the existence
of this limit and an explicit formula in terms of the graphical hyperplane
arrangement associated to $Q$ \cite[Cor. 4.27.]{Wys17b}. Our initial
motivation for this work was to extend this result to higher dimension
vectors.

\begin{prop}{\cite[Cor. 4.27.]{Wys17b}} \label{Prop/ToricFormulaB}

Let $Q$ be a quiver and $\dd=\underline{1}$. Let $p$ be a large
enough prime and $\mathbb{F}_{q}$ be any finite field of characteristic
$p$. Then $q^{-n\dim\mu_{Q,\dd}^{-1}(0)}\cdot\sharp\mu_{Q,\dd}^{-1}(0)(\mathbb{F}_{q}[t]/(t^{n})),\ n\geq1$
converges if, and only if, the underlying graph of $Q$ is 2-connected
i.e. removing one edge does not disconnect the graph. In that case,
there exists an explicit rational fraction $W\in\mathbb{Q}(T)$ depending
only on the underlying graph of $Q$ such that:\[
\underset{n\rightarrow+\infty}{\lim}\frac{\sharp\mu_{Q,\dd}^{-1}(0)(\mathbb{F}_{q}[t]/(t^{n}))}{q^{n\dim\mu_{Q,\dd}^{-1}(0)}}=W(q).
\]\end{prop}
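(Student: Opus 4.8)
\emph{Step 1 (strategy and combinatorial model).} The plan is to compute the local Igusa zeta function $Z_Q(s)$ of $X:=\mu_{Q,\underline{1}}^{-1}(0)$ and feed it into Proposition~\ref{Prop/JetCountLim}, the $2$-connectedness dichotomy emerging from which poles of $Z_Q$ lie of largest real part, equivalently from whether $X$ has rational singularities. For $\dd=\underline{1}$ each arrow $a$ carries scalars $(x_a,y_a)$, which commute, so the moment equation at a vertex $i$ reads $\sum_{t(a)=i}x_ay_a=\sum_{s(a)=i}x_ay_a$; putting $z_a:=x_ay_a$, this says $(z_a)_a$ lies in the cycle space $C_Q\subseteq\mathbb{A}^{Q_1}$ of the underlying graph, i.e.\ in the kernel of the incidence map $\partial$. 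Hence $X=\pi^{-1}(C_Q)$ for the flat map $\pi\colon\mathbb{A}^{2|Q_1|}\to\mathbb{A}^{Q_1}$, $(x_a,y_a)\mapsto(x_ay_a)$ (a product over arrows of the multiplication $\mathbb{A}^2\to\mathbb{A}^1$), so $X$ is a complete intersection of dimension $2|Q_1|-m$ with $m:=\rk(\partial)=|Q_0|-c(Q)$; it is reduced (Cohen--Macaulay and generically smooth, being smooth over $C_Q^{\circ}:=C_Q\cap\{z_a\ne0\ \forall a\}$), and it is irreducible precisely when $Q$ has no bridge --- because $C_Q^{\circ}$ is nonempty iff every edge lies on a cycle, and then it is dense in $C_Q$ while each $\pi^{-1}(C_Q\cap\{z_a=0\})$ has dimension $<\dim X$.

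\emph{Step 2 (the ``only if'' direction).} If $a_0$ is a bridge then $z_{a_0}\equiv0$ on $C_Q$, so $X=\bigl(X\cap\{x_{a_0}=0\}\bigr)\cup\bigl(X\cap\{y_{a_0}=0\}\bigr)$ with both pieces proper; thus $X$ is connected and reducible, hence not normal and so without rational singularities, and by the irreducibility-free form of the equivalence in Proposition~\ref{Prop/JetCountLim} (applicable since $X$ is a reduced, equidimensional complete intersection) the normalised count fails to converge for $p$ large. One sees this concretely from $\sharp X(\mathbb{F}_q[t]/t^n)=\sum_{\bar z\in C_Q(\mathbb{F}_q[t]/t^n)}\prod_{a}n_{\bar z_a}$, where $n_c:=\#\{(x,y)\in(\mathbb{F}_q[t]/t^n)^2:xy=c\}$ depends only on the $t$-adic valuation of $c$ and has order $n\,q^{n}$ at $c=0$ but $q^{n}$ at a unit: as $\bar z_{a_0}\equiv0$ the value at $c=0$ factors out, the remaining sum is the one for $Q\setminus a_0$, and $\dim X=\dim\mu_{Q\setminus a_0,\underline{1}}^{-1}(0)+1$, so the normalised count picks up a divergent factor of order $n$; iterating over all bridges reduces to the bridgeless case.

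\emph{Step 3 (the ``if'' direction and the formula).} Assume $Q$ bridgeless, so by Step~1 $X$ is a reduced, irreducible complete intersection; it remains to prove it has rational singularities and to evaluate the limit. One computes $Z_Q(s)$ by $p$-adic integration: pushing the defining integral forward along $\pi$ turns it into an integral over $\mathbb{Z}_p^{Q_1}$ of $|f(z)|^s$ --- with $f=(f_1,\dots,f_m)$ a maximal independent subfamily of the moment equations, which are linear in $z$ --- against the pushforward of Haar measure, whose density acquires a factor $(p\text{-adic valuation of }z_a)+1$ in each coordinate $a$. Evaluating this by stratifying $\mathbb{Z}_p^{Q_1}$ along the flats of the graphical hyperplane arrangement $\mathcal{A}_Q$ (equivalently, the flats of the graphic matroid of $Q$), each stratum contributes an explicit geometric series in $T=q^{-s}$, and the sum is a rational function $Z_Q\in\mathbb{Q}(q,T)$. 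The key point, and the place where bridgelessness enters, is that all poles of $Z_Q$ then have $\Rea(s)\le -m$, with equality only at the simple pole $s=-m$; by contrast a bridge $a$ makes the coordinate hyperplane $\{z_a=0\}$ a flat of $\mathcal{A}_Q$, which --- amplified by the ``$+1$'' in the measure --- forces an extra pole with $\Rea(s)>-m$, explaining the divergence of Step~2. The same combinatorics, run through Musta\c{t}\v{a}'s criterion (Proposition~\ref{Prop/MustCrit}), gives the equivalent statement $\dim\pi_\ell^{-1}(X_{\sg})<(\ell+1)\dim X$ for all jet levels $\ell$, using that $X_{\sg}$ is the locus where the support $\{a:(x_a,y_a)\ne0\}$ fails to connect $Q$ and that, $Q$ being bridgeless, this needs at least two edges to vanish. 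Either way Proposition~\ref{Prop/JetCountLim} yields convergence together with
\[
\lim_{n\to+\infty}\frac{\sharp\mu_{Q,\dd}^{-1}(0)(\mathbb{F}_q[t]/t^n)}{q^{n\dim\mu_{Q,\dd}^{-1}(0)}}=\frac{-1}{q^{m}-1}\Res_{T=q^{m}}Z_Q=:W(q),
\]
and since $Z_Q$ was built from $\mathcal{A}_Q$ alone, the fraction $W\in\mathbb{Q}(T)$ depends only on the underlying graph of $Q$.

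\emph{Main obstacle.} The crux is the evaluation in Step~3: running the multiplicity-weighted lattice-point sum over $C_Q$, organising it by the intersection lattice of $\mathcal{A}_Q$ and collapsing it into a rational function with a controlled denominator, and then checking that bridgelessness is exactly what forbids a pole of real part $>-m$ (equivalently, what makes $X$ regular in codimension one and, past that, have rational singularities). The jet-space and $p$-adic versions of this are the same matroid bookkeeping; a third route would resolve $X$ using the De~Concini--Procesi wonderful model of $\mathcal{A}_Q$ and apply Denef's formula, at similar cost.
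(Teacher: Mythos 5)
First, a point of comparison: the paper does not prove this proposition at all --- it is imported verbatim from Wyss \cite[Cor.\ 4.27]{Wys17} and then used as a black box (indeed the paper's logic runs in the opposite direction to yours: it deduces Corollary \ref{Cor/RatSgToric}, rational singularities for $2$-connected $Q$ with $\dd=\underline{1}$, \emph{from} Wyss's convergence statement via Proposition \ref{Prop/JetCountLim}, whereas you propose to establish the pole bound/rational singularities first and deduce convergence). So there is no internal proof to measure you against; your proposal has to stand as a reconstruction of Wyss's theorem, and judged that way it is incomplete. Your Step 1 is correct and is genuinely the right model: for $\dd=\underline{1}$ the equations only see $z_a=x_ay_a$, $X=\pi^{-1}(C_Q)$ with $\pi$ flat, giving the complete intersection, reducedness, the identification of the singular locus, and irreducibility exactly in the bridgeless case (your dimension count for $\pi^{-1}(C_Q\cap\{z_a=0\})$ works because \emph{every} fiber of $\pi$ has dimension $|Q_1|$). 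Step 2 is also essentially complete: a bridge forces $z_{a_0}\equiv 0$ on $C_Q$, the count factors as $n_0\cdot\sharp\mu_{Q\setminus a_0,\underline{1}}^{-1}(0)(\mathbb{F}_q[t]/t^n)$ with $n_0\sim n\,q^n$, and divergence follows once you add the (easy, but unstated) lower bound that the normalised count for $Q\setminus a_0$ stays bounded away from $0$ --- e.g.\ via the smooth $\mathbb{F}_q$-point with all $x_a$ units and $y_a=0$ and Hensel lifting.

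The genuine gap is Step 3, which is the entire content of Wyss's corollary, and you acknowledge as much under ``main obstacle.'' Everything that makes the theorem true in the bridgeless case is asserted rather than proved: that the pushforward integral, stratified by the flats of the graphical arrangement $\mathcal{A}_Q$ with the $(\text{valuation}+1)$ weights, sums to a rational function in $q$ and $T=q^{-s}$; that bridgelessness is precisely what confines the poles to $\Rea(s)\le -m$ with a unique (simple) pole on that line; and, on the alternative route, the actual jet-space dimension estimate $\dim\pi_\ell^{-1}(X_{\sg})<(\ell+1)\dim X$ --- knowing that $X_{\sg}$ requires two edges with $(x_a,y_a)=(0,0)$ is the starting point of that estimate, not the estimate itself. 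Note also that the final claim of the statement, a single $W\in\mathbb{Q}(T)$ with $W(q)$ equal to the limit for all large $q$ and ``depending only on the underlying graph,'' needs the \emph{uniform} rationality of $Z_Q$ in $q$; Proposition \ref{Prop/JetCountLim} alone only produces, for each $q$, a residue of a zeta function that a priori depends on $q$, so this uniformity again rests entirely on the uncarried-out computation over $\mathcal{A}_Q$. In short: correct framework, correct and nearly complete ``only if'' direction, but the ``if'' direction and the explicit, graph-only formula --- the heart of the result --- are missing.
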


When $\dd=1$, there exists a simple $\Pi_{Q}$-module of dimension
$\dd$ exactly when $Q$ is $2$-connected. Indeed, for a connected
quiver $Q$, $b(Q):=1-\sharp Q_{0}+\sharp Q_{1}=1-\langle\dd,\dd\rangle$
and $Q$ is $2$-connected if, and only if, for any decomposition
$\dd=\dd_{1}+\ldots+\dd_{r}$, $b(Q)>b(\supp(\dd_{1}))+\ldots+b(\supp(\dd_{r}))$.
The statement then follows from \cite[Thm. 1.2.]{CB01}. From this
we deduce:

\begin{cor} \label{Cor/RatSgToric}

Let $Q$ be a $2$-connected quiver and $\dd=\underline{1}$. Then
$\mu_{Q,\dd}^{-1}(0)$ has rational singularities.

\end{cor}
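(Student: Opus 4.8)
The plan is to deduce the statement from two results already assembled above --- Wyss's convergence theorem (Proposition~\ref{Prop/ToricFormulaB}) and the arithmetic criterion for rational singularities (Proposition~\ref{Prop/JetCountLim}) --- so that the only real task is to check that their hypotheses match. First I would set $X:=\mu_{Q,\underline{1}}^{-1}(0)$, regarded as a closed subscheme of $\mathbb{A}^{r}_{\mathbb{Z}}$ with $r=\sharp\overline{Q_{1}}=2\sharp Q_{1}$, cut out by the components $\mu_{i}$, $i\in Q_{0}$, of the moment map, which have integer coefficients. Since $\dd=\underline{1}$, each $\mu_{i}$ is a scalar, and the relation $\sum_{i\in Q_{0}}\mu_{i}=0$ (the trace of the moment map vanishes identically) shows that, $Q$ being connected, one may discard one component and still cut out $X$ scheme-theoretically by $m:=\sharp Q_{0}-1$ equations $f_{1},\ldots,f_{m}$. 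Because $Q$ is $2$-connected there is a simple $\Pi_{Q}$-module of dimension $\underline{1}$ (as recalled just before the statement), so Crawley-Boevey's geometric results \cite[Thm.~1.2--3.]{CB01a} give that $X_{\overline{\mathbb{Q}}}$ is reduced, irreducible and a complete intersection of dimension $2\sharp Q_{1}-m$; in particular $X_{\mathbb{Q}}$ is an equidimensional local complete intersection, so $X=V(f_{1},\ldots,f_{m})$ is exactly of the form to which Proposition~\ref{Prop/JetCountLim} applies.

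Next I would invoke the implication $2.\Rightarrow 1.$ of Proposition~\ref{Prop/JetCountLim}: its hypothesis asks that $X_{\overline{\mathbb{Q}}}$ be irreducible --- which we have just checked --- and that, for almost all primes $p$, the sequence $q^{-n\dim X_{\mathbb{Q}}}\cdot\sharp X(\mathbb{F}_{q}[t]/t^{n})$, $n\geq 1$, converge for every finite field $\mathbb{F}_{q}$ of characteristic $p$. Since $Q$ is $2$-connected, this is precisely Proposition~\ref{Prop/ToricFormulaB}. The criterion therefore yields that $X_{\overline{\mathbb{Q}}}$ has rational singularities. Finally, as the defining equations have rational coefficients, $\mu_{Q,\underline{1}}^{-1}(0)$ over $\KK$ is obtained from $X_{\overline{\mathbb{Q}}}$ by the flat base change $\overline{\mathbb{Q}}\hookrightarrow\KK$; a resolution of $X_{\overline{\mathbb{Q}}}$ base-changes to a resolution of $\mu_{Q,\underline{1}}^{-1}(0)$, and higher direct images of the structure sheaf commute with this base change, so $\mu_{Q,\underline{1}}^{-1}(0)$ has rational singularities over $\KK$ as well.

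I expect the only delicate points to be the bookkeeping that $X$ is cut out by exactly its codimension many equations --- so that the integer $m$ appearing in Proposition~\ref{Prop/JetCountLim} is the codimension of $X$, which is what makes the local complete intersection hypothesis usable --- and the appeal to Crawley-Boevey's irreducibility and complete intersection statements outside the totally negative range; the base-field transfer is routine. Note that, in contrast with Theorem~\ref{Thm/MainResIntro}, this argument uses neither property (P) nor the jet-dimension estimates of Section~\ref{Sect/MainRes}: it is the same circle of ideas specialized to $\dd=\underline{1}$, with Wyss's explicit Igusa zeta function computation playing the role of those estimates.
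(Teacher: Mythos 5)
Your argument is correct and is essentially the paper's own deduction: the corollary is obtained exactly by combining the existence of a simple $\Pi_{Q}$-module in dimension $\underline{1}$ for $2$-connected $Q$ (hence, via Crawley-Boevey, the reduced, irreducible complete intersection property) with Wyss's convergence result (Proposition \ref{Prop/ToricFormulaB}) and the implication $2.\Rightarrow 1.$ of Proposition \ref{Prop/JetCountLim}. Your additional bookkeeping (discarding one moment-map equation so that the number of equations equals the codimension, and the base change from $\overline{\mathbb{Q}}$ to $\KK$) merely makes explicit what the paper leaves implicit.
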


\section{Rational singularities and totally negative quivers \protect\label{Sect/MainRes}}

In this section we prove Theorem \ref{Thm/MainResIntro}. Let us recall
the statement:

\begin{thm} \label{Thm/MainRes}

Let $Q$ be a quiver and $\dd\in\mathbb{N}^{Q_{0}}\setminus\{0\}$
such that $(Q,\dd)$ has property (P) . Then $\mu_{Q,\dd}^{-1}(0)$
has rational singularities.

\end{thm}

We follow the inductive strategy developed by Budur in \cite{Bud21}.
Let us use notations from \cite{Bud21}: we write, for short, $X(Q,\dd)=\mu_{Q,\dd}^{-1}(0)\subseteq R(\overline{Q},\dd)$,
$M(Q,\dd)=X(Q,\dd)\git\GL(\dd)$ and $Z(Q,\dd)=\left(\mu_{Q,\dd}^{-1}(0)\right)_{\tau_{\min}}$,
where $\tau_{\min}=\tau_{\min,\dd}$ is the semisimple type of $0\in X(Q,\dd)$
- see Section \ref{Subsect/MomMap}. We denote by $q:X(Q,\dd)\rightarrow M(Q,\dd)$
the quotient morphism. Recall that we defined (P) the following property
of $(Q,\dd)$: $Q$ is totally negative and if $\supp(\dd)$ has two
vertices joined by exactly one arrow, then $\dd\ne\underline{1}$.
Budur's reasoning is summed up in the following result.

\begin{thm}{\cite[Thm. 3.6.]{Bud21}} \label{Thm/BudurInduction}

Let $\mathcal{M}$ be a class of pairs $(Q,\dd)$, where $Q$ is a
quiver and $\dd\in\mathbb{N}^{Q_{0}}$ is a dimension vector. Suppose
that:
\begin{enumerate}
\item The class $\mathcal{M}$ is stable under the operation of building
pairs $(Q_{\tau},\ee)$ as in Section \ref{Subsect/EtaleSlices},
for $\tau$ a semisimple type occuring in $X(Q,\dd)$ and $(Q,\dd)\in\mathcal{M}$,
\item For every $(Q,\dd)\in\mathcal{M}$, the variety $X(Q,\dd)$ contains
a simple point and $\langle\dd,\dd\rangle<1$,
\item For every $(Q,\dd)\in\mathcal{M}$ such that $X(Q,\dd)$ contains
strictly semisimple points\footnote{In other words, $\dd\ne\epsilon_{i}$ for all $i\in Q_{0}$, as $\tau_{\min}=(\epsilon_{i},d_{i},\ i\in\supp(\dd))$.},\[
\dim q^{-1}(q(0))<2\cdot(1-\langle\dd,\dd\rangle-\sharp\{\text{loops in } Q_0\}).
\]
\end{enumerate}
Then for every $(Q,\dd)\in\mathcal{M}$, $X(Q,\dd)$ has rational
singularities.

\end{thm}

Let us consider $\mathcal{M}$ the class of pairs $(Q,\dd)$ satisfying
property (P) and $\supp(\dd)=Q$. $\mathcal{M}$ is preserved under
taking auxiliary quivers (this is an easy consequence of \cite[Prop. 2.11.]{Bud21}),
so in the above theorem, Assumption 1 is verified. Assumption 2 follows
from total negativity and Proposition \ref{Prop/TotNegSimp}. However,
Assumption 3 is satisfied for most, but not all dimension vectors.
This is the content of the following Lemma and Remark.

\begin{lem} \label{Lem/DimBoundTotNeg}

Let $\tau=\tau_{\min}$. Suppose that $(Q,\dd)$ has property (P).
Then either $\dd=\underline{1}$ or:\[
\dim q^{-1}(q(0))<2\cdot(1-\langle\dd,\dd\rangle-\sharp\{\text{loops in } Q_0\}).
\]\end{lem}

\begin{rmk} \label{Rmk/BoundFailure}

When $\dd=\underline{1}$, we may have:\[
\dim q^{-1}(q(0))=2\cdot(1-\langle\dd,\dd\rangle-\sharp\{\text{loops in } Q_0\}).
\]This is the case, for instance, when $Q$ has two vertices with two
loops each and joined by two arrows (regardless of their orientation).
This quiver arises as an auxiliary quiver for the quiver with one
vertex and two loops. Actually, within the class of quivers with dimension
vectors considered in \cite[Prop. 2.26.]{Bud21}, Assumption 3 fails
exactly for that pair. This is due to a computational gap in the proof
of \cite[Prop. 2.23.]{Bud21}. More precisely, on the first line of
the proof of \cite[Prop. 2.23.]{Bud21}, the right-hand side should
be $2(g-1)(n^{2}-\sum_{i}\beta_{i})-2r+2$ instead of $2(g-1)(n^{2}-\sum_{i}\beta_{i})$.

\end{rmk}

Therefore, we show by other means that $X(Q,\dd)$ has rational singularities
when $\dd=\underline{1}$. In order to incorporate those cases into
Budur's inductive argument, we prove the following modified version
of Theorem \ref{Thm/BudurInduction}:

\begin{thm} \label{Thm/ModifInduction}

Let $\mathcal{M}$ be a class of pairs $(Q,\dd)$, where $Q$ is a
quiver and $\dd\in\mathbb{N}^{Q_{0}}$ is a dimension vector. Let
us make the following assumptions:
\begin{enumerate}
\item The class $\mathcal{M}$ is stable under the operation of building
pairs $(Q_{\tau},\ee)$ as in Section \ref{Subsect/EtaleSlices},
for $\tau$ a semisimple type occuring in $X(Q,\dd)$ and $(Q,\dd)\in\mathcal{M}$,
\item For every $(Q,\dd)\in\mathcal{M}$, the variety $X(Q,\dd)$ contains
a simple point,
\item For every $(Q,\dd)\in\mathcal{M}$, suppose either that $X(Q,\dd)$
has rational singularities or that the following inequality holds:\[
\dim q^{-1}(q(0))<2\cdot(1-\langle\dd,\dd\rangle-\sharp\{\text{loops in } Q_0\}).
\]
\end{enumerate}
Then for every $(Q,\dd)\in\mathcal{M}$, $X(Q,\dd)$ has rational
singularities.

\end{thm}

Note that we removed the assumption $\langle\dd,\dd\rangle<1$ from
Assumption 2. This is harmless, for when $X(Q,\dd)$ contains a simple
point, $\langle\dd,\dd\rangle\leq1$ and equality only occurs when
$\dd=\epsilon_{i}$, for $i\in Q_{0}$ a vertex without loops, in
which case $X(Q,\dd)$ is smooth. Indeed, if $X(Q,\dd)$ contains
a simple point and $\langle\dd,\dd\rangle=1$, then $M(Q,\dd)$ is
reduced to a point by Proposition \ref{Prop/GeoMomMap}. Thus $X(Q,\dd)$
does not contain strictly semisimple points, which can only happen
if $\dd=\epsilon_{i}$ for some $i\in Q_{0}$. The number of loops
at $i$ is then $1-\langle\epsilon_{i},\epsilon_{i}\rangle=1-\langle\dd,\dd\rangle=0$.
At any rate, since Theorem \ref{Thm/MainRes} only concerns totally
negative quivers, the assumption $\langle\dd,\dd\rangle<1$ is automatically
satisfied.

Before proving Theorem \ref{Thm/ModifInduction}, we recall the following
results from \cite{Bud21}:

\begin{lem}{\cite[Lem. 2.16.]{Bud21}} \label{Lem/BoundDimStratZ}

Let $Q$ be a quiver and $\dd\in\mathbb{N}^{Q_{0}}$ a dimension vector.
Assume that:\[
\dim q^{-1}(q(0))<2\cdot(1-\langle\dd,\dd\rangle-\sharp\{\text{loops in } Q_0\}).
\] Then $\dim Z(Q,\dd)<2(1-\langle\dd,\dd\rangle)$.

\end{lem}

\begin{lem}{\cite[Lem. 3.3. - Proof of Lem. 3.4.]{Bud21}} \label{Lem/JetsMomMap}

Let $\pi_{m}:X(Q,\dd)_{m}\rightarrow X(Q,\dd)$ be the truncation
of $m$-jets:\[
\pi_m^{-1}(0)\simeq
\left\{
\begin{array}{ll}
R(\overline{Q},\dd)\times X(Q,\dd)_{m-2} & ,\ m\geq2, \\
R(\overline{Q},\dd) & ,\ m=1.
\end{array}
\right.
\]Moreover, $\dim\pi_{m}^{-1}(Z(Q,\dd)\cap X(Q,\dd)_{\sg})\leq\dim Z(Q,\dd)+\dim\pi_{m}^{-1}(0)$
(see \cite[Proof of Lem. 3.4. - Eqn. (10)]{Bud21}).

\end{lem}

\begin{proof}[Proof of Theorem \ref{Thm/ModifInduction}]

Consider a pair $(Q,\dd)\in\mathcal{M}$ for which it is not already
assumed that $X(Q,\dd)$ has rational singularities. We proceed by
descending induction on semisimple types occuring in $X(Q,\dd)$ i.e.
we show that $X(Q_{\tau},\ee)$ has rational singularities for all
semisimple types $\tau$ occuring in $X(Q,\dd)$. When $\tau$ is
maximal (i.e. simple, by Assumption 2), $Q_{\tau}$ has one vertex
and $\ee=\underline{1}$, so $X(Q_{\tau},\ee)$ is smooth.

Let us prove the induction step. We apply Proposition \ref{Prop/MustCrit}
to $X(Q,\dd)$, which is a complete intersection by Assumption 2 and
Proposition \ref{Prop/GeoMomMap}. We show that:\[
\dim\pi_m^{-1}(X(Q,\dd)_{\sg})<(m+1)\cdot\dim X(Q,\dd).
\]By induction, we assume that $X(Q_{\tau},\ee)$ has rational singularities
for $\tau>\tau_{\min}$. By Proposition \ref{Prop/RatSgClosedOrb},
we get that the open subset $X(Q,\dd)\setminus Z(Q,\dd)$ has l.c.i.
rational singularities and we obtain by Proposition \ref{Prop/MustCrit}:\[
\dim\pi_m^{-1}(X(Q,\dd)_{\sg}\setminus Z(Q,\dd))<(m+1)\cdot\dim X(Q,\dd).
\]Now, consider $\tau=\tau_{\min}$, i.e. $(Q_{\tau},\ee)=(Q,\dd)$.
From Proposition \ref{Prop/GeoMomMap} and Lemmas \ref{Lem/DimBoundTotNeg},
\ref{Lem/BoundDimStratZ}, \ref{Lem/JetsMomMap}, we obtain:\[
\dim\pi_m^{-1}(Z(Q,\dd)\cap X(Q,\dd)_{\sg})\leq\dim\left(Z(Q,\dd)\right)+\dim\pi_{m}^{-1}(0)<\dim M(Q,\dd)+\dim \pi_{m}^{-1}(0).
\]We prove that $\dim\pi_{m}^{-1}(Z(Q,\dd)\cap X(Q,\dd)_{\sg})<(m+1)\cdot\dim X(Q,\dd)$
by induction on $m$. For $m=1$, Lemma \ref{Lem/JetsMomMap} gives:\[
\dim M(Q,\dd)+\dim \pi_{m}^{-1}(0)=\dim M(Q,\dd)+\dim R(\overline{Q},\dd)=2\cdot\left(1-\langle\dd,\dd\rangle+\dd\cdot\dd-\langle\dd,\dd\rangle\right)=2\dim X(Q,\dd).
\]For $m\geq2$, we obtain:\[
\begin{split}
\dim M(Q,\dd)+\dim \pi_{m}^{-1}(0) & =\dim M(Q,\dd)+\dim R(\overline{Q},\dd)+\dim X(Q,\dd)_{m-2}, \\
& \leq2\dim X(Q,\dd)+(m-1)\cdot\dim X(Q,\dd)=(m+1)\cdot\dim X(Q,\dd).
\end{split}
\]where the second inequality holds by induction on $m$ and using the
following inequalities at step $m-2$:\[
\left\{
\begin{array}{rcl}
\dim\pi_{m-2}^{-1}(X(Q,\dd)_{\sm}) & = & (m-1)\cdot\dim X(Q,\dd), \\
\dim\pi_{m-2}^{-1}(Z(Q,\dd)\cap X(Q,\dd)_{\sg}) & < & (m-1)\cdot\dim X(Q,\dd), \\
\dim\pi_{m-2}^{-1}(X(Q,\dd)_{\sg}\setminus Z(Q,\dd)) & < & (m-1)\cdot\dim X(Q,\dd).
\end{array}
\right.
\] Therefore, at step $m$, we obtain the following inequalities:\[
\left\{
\begin{array}{rcl}
\dim\pi_{m}^{-1}(Z(Q,\dd)\cap X(Q,\dd)_{\sg}) & < & (m+1)\cdot\dim X(Q,\dd), \\
\dim\pi_{m}^{-1}(X(Q,\dd)_{\sg}\setminus Z(Q,\dd)) & < & (m+1)\cdot\dim X(Q,\dd).
\end{array}
\right.
\]So we obtain $\dim\pi_{m}^{-1}(X(Q,\dd)_{\sg})<(m+1)\cdot\dim X(Q,\dd)$
for all $m\geq1$, which proves, by Musta\c{t}\v{a}'s criterion,
that $X(Q,\dd)$ has rational singularities. \end{proof}

To complete the proof, we need to prove Lemma \ref{Lem/DimBoundTotNeg}
and show that $X(Q,\dd)$ has rational singularities when $\dd=\underline{1}$.
The latter fact can easily be deduced from the results in Section
\ref{Subsect/RatSgJets}.

\begin{lem} \label{Lem/RatSgToric}

If $(Q,\dd)$ has property (P) and $\dd=\underline{1}$, then $X(Q,\dd)$
has rational singularities.

\end{lem}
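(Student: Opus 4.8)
The plan is to deduce this from Corollary~\ref{Cor/RatSgToric}, which already establishes rational singularities of $\mu_{Q,\underline{1}}^{-1}(0)$ when $Q$ is $2$-connected. So the only thing I need to check is that property (P) together with $\dd=\underline{1}$ forces the underlying graph of $Q$ to be $2$-connected in the sense of Proposition~\ref{Prop/ToricFormulaB}, i.e. that deleting any single edge leaves it connected.

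First I would record that, $Q$ being totally negative, its underlying graph is complete and carries at least two loops at every vertex; loops play no role in $2$-edge-connectedness, so only the number of vertices matters, and in particular the graph is connected. I then run a short case analysis on $\sharp Q_0$. If $\sharp Q_0\ge 3$, the underlying graph is a complete graph on at least three vertices: after removing the (possibly several) edges between two vertices $u,v$ one can still travel $u-w-v$ through any third vertex $w$, so the graph stays connected. If $\sharp Q_0=1$, a single vertex with loops is trivially $2$-connected. The one case that actually uses condition~2 of property (P) is $\sharp Q_0=2$: then $\supp(\dd)=Q$ has exactly two vertices and $\dd_{\restriction\supp(\dd)}=(1,1)$ since $\dd=\underline{1}$, so condition~2 forbids these two vertices from being joined by a single edge; hence they are joined by at least two edges, and removing one keeps the graph connected. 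In every case $Q$ is $2$-connected.

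Having verified this, Corollary~\ref{Cor/RatSgToric} applies verbatim and shows that $X(Q,\dd)=\mu_{Q,\underline{1}}^{-1}(0)$ has rational singularities. If one prefers not to cite Corollary~\ref{Cor/RatSgToric} as a black box, the same conclusion follows by combining the $2$-connectedness with Wyss's convergence criterion (Proposition~\ref{Prop/ToricFormulaB}) and the arithmetic characterization of rational singularities (Proposition~\ref{Prop/JetCountLim}); this is the same argument unwound.

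The argument is essentially immediate once the combinatorics is set up; the only point requiring care is matching the somewhat rigid phrasing of condition~2 of property (P) — which really asserts that $\supp(\dd)$ is, as a quiver, two vertices joined by exactly one arrow — to the $2$-connectedness hypothesis, so that the two-vertex case is handled correctly and no exceptional quiver slips through.
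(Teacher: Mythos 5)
Your proposal is correct and follows exactly the paper's route: the paper's proof is a one-line citation of Corollary~\ref{Cor/RatSgToric}, asserting that property (P) with $\dd=\underline{1}$ makes the underlying graph $2$-connected (it also notes \cite[Thm. 2.11.]{HSS21} as an alternative). Your case analysis on $\sharp Q_0$, in particular the two-vertex case where condition~2 of (P) is needed, simply spells out the $2$-connectedness check that the paper leaves implicit.
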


\begin{proof}

This follows from Corollary \ref{Cor/RatSgToric} since $\dd$ has
all entries equal to $1$ and the graph underlying $Q$ is $2$-connected.
We could also use \cite[Thm. 2.11.]{HSS21}. \end{proof}

We now turn to the proof of Lemma \ref{Lem/DimBoundTotNeg}. We first
prove the following intermediate inequality:

\begin{lem} \label{Lem/DimBoundLoops}

Let $Q=S_{g}$ be the quiver with one vertex and $g\geq2$ loops and
$d\geq2$. Let $(j_{s},m_{s},\ 1\leq s\leq h)$ be a top type compatible
with $\tau_{\min}$. Then:\[
2\cdot(1-\langle d,d\rangle-g)-\left(d^2-1+1-\langle d,d\rangle+m_{2}m_{1}+\ldots m_{h}m_{h-1}-g\cdot(m_{1}^2+\ldots m_{h}^2)\right)\geq d-1.
\]\end{lem}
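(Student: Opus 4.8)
The claim is a self-contained combinatorial inequality once the Euler form of $S_g$ is made explicit, so the plan is to reduce it to an elementary estimate on compositions of $d$ and then verify that estimate directly. The first step is to record that for $Q=S_g$ one has $\langle d,d\rangle=(1-g)d^2$, hence $1-\langle d,d\rangle-g=(g-1)(d^2-1)$ and $-\langle d,d\rangle=(g-1)d^2$. Substituting this and cancelling the $-1+1$ inside the bracket, the left-hand side of the asserted inequality becomes
\[
2(g-1)(d^2-1)-g\Bigl(d^2-\sum_{s=1}^h m_s^2\Bigr)-\sum_{s=2}^h m_{s-1}m_s .
\]
Here $(m_s)_{1\le s\le h}$ is a composition of $d$ with all parts $m_s\ge 1$: a top-type compatible with $\tau_{\min}$ involves a single simple summand (the $1$-dimensional vertex simple), so it is precisely such a composition, with $\sum_s m_s=d$.

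The second step is to feed in two elementary bounds. From $2m_{s-1}m_s\le m_{s-1}^2+m_s^2$, summing over $s$ and using $m_1,m_h\ge 1$, one gets $\sum_{s=2}^h m_{s-1}m_s\le P-1$, where $P:=\sum_{s=1}^h m_s^2$; and since each part is at least $1$, $P\ge\sum_s m_s=d$. Plugging the first bound into the expression above and simplifying, the left-hand side is at least
\[
2(g-1)(d^2-1)-g(d^2-P)-(P-1)=(g-2)d^2-2(g-1)+(g-1)P+1 ,
\]
and then, using $P\ge d$ together with $g\ge 2$ (so $(g-2)d^2\ge 0$ and $g-1\ge 1$), this is at least $(g-1)(d-2)+1$.

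The last step is to compare: $(g-1)(d-2)+1-(d-1)=(g-2)(d-2)\ge 0$ because $g\ge 2$ and $d\ge 2$, which finishes the proof. I do not expect a genuine obstacle; the only points requiring care are the sign conventions in the Euler form and the degenerate shape $h=1$, where the sum $\sum_{s=2}^1(\cdot)$ is empty and $P=d^2$, so the bound $\sum_{s=2}^h m_{s-1}m_s\le P-1$ still holds and no case split is needed. It is worth noting that equality is attained for $g=d=2$ and the composition $(1,1)$, so none of the estimates above can be weakened.
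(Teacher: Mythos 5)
Your proof is correct and is essentially the same argument as the paper's: both rearrange the left-hand side to the same expression, bound the chain sum $\sum_{s\ge 2} m_{s-1}m_s$ via $2ab\le a^2+b^2$ together with $m_1,m_h\ge 1$, and finish with $\sum_s m_s^2\ge \sum_s m_s=d$ and $g\ge 2$. The only difference is the order of the estimates (you keep $g$ general a bit longer, getting the slightly sharper intermediate bound $(g-1)(d-2)+1$), and your handling of the $h=1$ case and the equality example are consistent with the paper's remark that equality occurs for $g=2$ with all $m_s=1$.
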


\begin{proof}

Rearranging terms, the left-hand side reads:\[
\begin{split}
& g\cdot\left(d^2+\sum_sm_s^2-2\right)-2(d^2-1)-(m_{2}m_{1}+\ldots m_{h}m_{h-1}) \\
\geq\ & 2\cdot\left(d^2+\sum_sm_s^2-2\right)-2(d^2-1)-(m_{2}m_{1}+\ldots m_{h}m_{h-1}) \\
\geq\ & 2\left(\sum_sm_s^2-1\right)-(m_{2}m_{1}+\ldots m_{h}m_{h-1}) \\
\geq\ &\sum_sm_s^2-1+\frac{1}{2}\cdot\left(m_1^2+m_h^2+(m_1-m_2)^2+\ldots+(m_{h-1}-m_h)^2-2\right) \\
\geq\ & d-1.
\end{split}
\]Note that both sides are equal when $g=2$ and $m_{1}=\ldots=m_{s}=1$.
\end{proof}

\begin{proof}[Proof of Lemma \ref{Lem/DimBoundTotNeg}]

Without loss of generality, we assume that $\supp(\dd)=Q$; otherwise
we restrict to $\text{supp}(\dd)$, which also has property (P). Let
$g_{i}$ be the number of loops of $Q$ at vertex $i$, $r_{ij}$
be the number of arrows between vertices $i\ne j$ and $(j_{s},m_{s},\ 1\leq s\leq h)$
be a top type compatible with $\tau$. Then, by Proposition \ref{Prop/CBDimBound},
the inequality above holds if the following holds for arbitrary top-type:\[
\sum_id_i^2-1+1-\langle\dd,\dd\rangle+\sum_sm_sz_s-\sum_sm_s^2g_{j_s}<2\cdot(1-\langle\dd,\dd\rangle-\sum_ig_i).
\]We now want to split this inequality along vertices of $Q$. Let us
first rearrange the indices $1\leq s\leq h$ so that $j_{1}=\ldots=j_{s_{1}}=1$,
and so on until $j_{s_{r-1}+1}=\ldots=j_{s_{r}}=r$. Then $z_{s_{i-1}+1}=0$
and $z_{s_{i-1}+j+1}=m_{s_{i-1}+j}$ for $j>0$, $1\leq i\leq r$,
with the convention that $s_{0}=0$. We obtain:\[
\sum_id_i^2-1+1-\langle\dd,\dd\rangle+\sum_i(m_{s_{i-1}+2}m_{s_{i-1}+1}+\ldots m_{s_i}m_{s_i-1})-\sum_ig_i\cdot(m_{s_{i-1}+1}^2+\ldots m_{s_i}^2)<2\cdot(1-\langle\dd,\dd\rangle-\sum_ig_i).
\]We know from Lemma \ref{Lem/DimBoundLoops} that for all $1\leq i\leq r$:\[
\begin{split}
& 2\cdot(1-\langle d_i\epsilon_i,d_i\epsilon_i\rangle-g_i) \\
& -\left(d_i^2-1+1-\langle d_i\epsilon_i,d_i\epsilon_i\rangle+m_{s_{i-1}+2}m_{s_{i-1}+1}+\ldots+ m_{s_i}m_{s_i-1}-g_i\cdot(m_{s_{i-1}+1}^2+\ldots m_{s_i}^2)\right) \\
\geq\ & d_i-1,
\end{split}
\]where equality holds for top type $(m_{s}=1,\ 1\leq s\leq h)$. Taking
the remaining terms in the inequality (right-hand side minus left-hand
side) gives:\[
1-\langle\dd,\dd\rangle-\sum_i(1-\langle d_i\epsilon_i,d_i\epsilon_i\rangle)-(r-1)=\sum_{i\ne j}r_{ij}d_id_j-2(r-1).
\]Set $r_{1}$ (resp. $r_{2}$) the number of vertices $i\in Q_{0}$
such that $d_{i}=1$ (resp. $d_{i}\geq2$). Then $r=r_{1}+r_{2}$
(recall that we assume that $\supp(\dd)=Q$). Then:\[
\begin{split}
\sum_{i\ne j}r_{ij}d_id_j-2(r-1) & \geq 4 \cdot\frac{r_2(r_2-1)}{2}+2r_1r_2+\frac{r_1(r_1-1)}{2}-2(r-1) \\
& \geq 2(r_2-1)(r-1)+\frac{r_1(r_1-1)}{2}.
\end{split}
\]Summing everything, we obtain:\[
\begin{split}
& 2\cdot(1-\langle\dd,\dd\rangle-\sum_ig_i) \\
& -\left(\sum_id_i^2-1+1-\langle\dd,\dd\rangle+\sum_i(m_{s_{i-1}+2}m_{s_{i-1}+1}+\ldots m_{s_i}m_{s_i-1})-\sum_ig_i\cdot(m_{s_{i-1}+1}^2+\ldots m_{s_i}^2)\right) \\
\geq & \sum_i(d_i-1)+2(r_2-1)(r-1)+\frac{r_1(r_1-1)}{2}.
\end{split}
\]If $\dd>\underline{1}$, the right-hand side is positive, so we get
the desired inequality. \end{proof}

Theorem \ref{Thm/MainRes} now follows from Lemmas \ref{Lem/DimBoundTotNeg},
\ref{Lem/RatSgToric} and Theorem \ref{Thm/ModifInduction}.

\begin{rmk} 

Unfortunately, it may happen that $X(Q,\dd)$ has rational singularities,
while the dimension bound from \cite[Lem. 2.16.]{Bud21} fails. This
is the case, for instance, with the following quivers: $\bullet\rightrightarrows\bullet$
and $\bullet\rightrightarrows\bullet\rightrightarrows\bullet$, when
$\dd=\underline{1}$ (both are 2-connected).

\end{rmk}

\section{Applications to moduli of totally negative 2-Calabi-Yau categories
\protect\label{Sect/App2CY}}

\subsection{Rational singularities \protect\label{Subsect/RatSgTotNeg2CY}}

Let $\mathfrak{M}=[X/G]$ be a quotient stack coming from a 2-Calabi-Yau
category $\mathcal{T}$ and whose closed points parametrize objects
in an abelian subcategory $\mathcal{A}\subseteq\mathcal{T}$ (see
Definition \ref{Def/QuotStack2CY}). As a consequence of Theorem \ref{Thm/MainRes},
if all auxiliary quivers arising from $\mathfrak{M}$ have property
(P), then $X$ has l.c.i. and rational singularities. This leads us
to the notion of a totally negative category, as introduced in \cite{DHSM22}.
We define it using Hom-spaces in triangulated categories, similarly
to Definition \ref{Def/CYCat}, since in the example we consider,
Ext-quivers can be computed from such Hom-spaces (see Section \ref{Subsect/Mod2CY}).

\begin{df} \label{Def/TotNegCat}

Let $\mathcal{C}$ be a full subcategory of a $\KK$-linear, Hom-finite,
triangulated category $\mathcal{T}$. Suppose that for all $F_{1},F_{2}\in\mathcal{T}$,
$\Hom(F_{1},F_{2}[i])=0$ for all but finitely many $i\in\mathbb{Z}$.
$\mathcal{C}$ is called totally negative if, for all $F_{1},F_{2}\in\mathcal{C}$:\[
\sum_{i\geq0}(-1)^i\cdot\hom(F_1,F_2[i])<0.
\]\end{df}

For example, when $\Pi_{Q}$ or $\Lambda^{q}(Q)$ are 2-Calabi-Yau,
the categories of $\Pi_{Q}$-modules or $\Lambda^{q}(Q)$-modules
are totally negative if, and only if, $Q$ is totally negative (see
Section \ref{Subsect/Mod2CY}). Here, we see the category $\mathcal{C}$
of $\Pi_{Q}$-modules (or $\Lambda^{q}(Q)$-modules) as a subcategory
of its bounded derived category $\mathcal{T}$, which is 2-Calabi-Yau
in the sense of Definition \ref{Def/CYCat}.

Note that, in order to prove that $X$ has l.c.i., rational singularities,
we only need to show that the auxiliary quivers arising from $X$
are totally negative. In other words, we only need to show that the
subcategory $\mathcal{C}\subseteq\mathcal{A}$ generated by simple
summands occuring at closed points of $\mathfrak{M}$ is totally negative.
We also require an additional property on the subset of simple objects
so that auxiliary quivers also satisfy property (P).

\begin{thm} \label{Thm/RatSgTotNeg2CY}

Let $\mathfrak{M}=[X/G]$ be a quotient stack coming from a 2-Calabi-Yau
category and $M:=X\git G$. Assume that:
\begin{enumerate}
\item For any closed point $x\in\mathfrak{M}(\KK)$, the Ext-quiver associated
to $x$ is the double of a totally negative quiver,
\item Simple objects form a dense subset of $M$.
\end{enumerate}
Then $X$ is locally complete intersection and has rational singularities.

\end{thm}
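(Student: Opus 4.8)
The plan is to reduce the global statement for $X$ to the local statements for quiver moment maps $\mu_{Q',\ee}^{-1}(0)$ already established in Theorem \ref{Thm/MainRes}, using the \'etale-local models of Theorem \ref{Thm/LocMod2CY} together with the descent properties of rational singularities from Lemma \ref{Lem/RatSgDesc}. First I would observe that it suffices to prove the statement \'etale-locally on $X$, since being reduced and having rational singularities are both \'etale-local properties. Since $\mathfrak{M}=\bigsqcup_\alpha [X_\alpha/G_\alpha]$, I may work one component $X_\alpha$ at a time, and within $X_\alpha$ it is enough to show that every point has an \'etale neighborhood that is reduced with rational singularities. The key reduction is that the stratification of $M_\alpha=X_\alpha\git G_\alpha$ by conjugacy classes of stabilizers (as recalled after Definition \ref{Def/QuotStack2CY}) has only finitely many strata, and every $G_\alpha$-orbit in $X_\alpha$ degenerates to a closed one; so it is enough to produce, for every closed point $x$, a $G_\alpha$-saturated \'etale neighborhood of the corresponding closed orbit that is reduced with rational singularities, and then argue as in Proposition \ref{Prop/RatSgClosedOrb} that this property spreads out to all orbits degenerating to $x$.

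Next, fix a closed point $x\in\mathfrak{M}(\KK)$ corresponding to a semisimple (polystable) object $F\simeq\bigoplus_i F_i^{\oplus e_i}$, with Ext-quiver $(\overline{Q'},\ee)$. By hypothesis (1), $\overline{Q'}$ is the double of a totally negative quiver $Q'$. I still need to know that $(Q',\ee)$ has property (P); this is where hypothesis (2) enters. Since simple objects are dense in $M$, and the locus of simple objects with a given ``splitting type'' is governed by the combinatorics of the Ext-quiver exactly as in the $\Pi_Q$ case, the forbidden configuration in property (P) — two vertices joined by a single arrow with $\ee$ restricting to $(1,1)$ on them — would force the generic object in the relevant stratum to be non-simple, contradicting density of simples. (Concretely: if vertices $i,j$ of $Q'$ are joined by a single arrow in $Q'$ and $e_i=e_j=1$, then Crawley-Boevey's criterion \cite[Thm. 8.1.]{CB01a}, invoked as in Proposition \ref{Prop/TotNegSimp}, shows there is no simple $\Pi_{Q'}$-module of dimension $\ee$, i.e. no simple object deforming $F$ within its stratum, so simples cannot be dense near $x$.) Hence $(Q',\ee)$ has property (P), and Theorem \ref{Thm/MainRes} applies: $\mu_{Q',\ee}^{-1}(0)$ is reduced (Corollary \ref{Prop/GeoMomMap}) and has rational singularities.

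Now I apply the \'etale-local model of Theorem \ref{Thm/LocMod2CY}, in the form of the second diagram displayed after it: there is an affine $\GL(\ee)$-variety $W$ with fixed point $w$ and $\GL(\ee)$-equivariant \'etale maps $W\to\mu_{Q',\ee}^{-1}(0)$ and $W\to X$ sending $w\mapsto 0$ and $w\mapsto x$ respectively, with the associated squares on associated bundles cartesian over $M$. Pulling back along the \'etale map to $\mu_{Q',\ee}^{-1}(0)$: since $\mu_{Q',\ee}^{-1}(0)$ is reduced and has rational singularities, and reducedness and rational singularities pass along \'etale morphisms (Lemma \ref{Lem/RatSgDesc} for the latter; reducedness is clear since \'etale maps are smooth hence the local rings agree up to \'etale base change), the scheme $W$ is reduced with rational singularities near $w$; then the \'etale map $W\times^{\GL(\ee)}G\to X$ transports this to a $G$-saturated open neighborhood of the orbit $Gx\subseteq X$, which is therefore reduced with rational singularities. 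Finally, running the spreading-out argument of Proposition \ref{Prop/RatSgClosedOrb} across the finitely many stabilizer strata of $M_\alpha$ covers all of $X_\alpha$, and letting $\alpha$ vary covers $X$. The main obstacle I anticipate is the bookkeeping in the second step: making precise that density of simple objects in $M$ forces property (P) on \emph{every} Ext-quiver arising from $X$ (not just the generic one), since a priori a deep stratum could be small yet still carry a bad Ext-quiver; the resolution is that the bad configuration in property (P) obstructs simpleness for the Ext-quiver's own minimal dimension vector, hence obstructs simpleness in an entire neighborhood in $M$, contradicting density — but one must phrase this uniformly using Proposition \ref{Prop/QuotStrat} and the identification of Ext-quivers with the auxiliary quivers $(Q_\tau,\ee)$.
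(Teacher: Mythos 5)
Your proposal is correct and follows essentially the same route as the paper: \'etale-local models from Theorem \ref{Thm/LocMod2CY}, density of simple objects transferred through the stabilizer-preserving \'etale maps to produce (or, in your contrapositive phrasing, to rule out the absence of) a simple $\Pi_{Q'}$-module of dimension $\ee$, hence property (P) for each Ext-quiver, then Theorem \ref{Thm/MainRes} and Lemma \ref{Lem/RatSgDesc} to get reduced, rational-singularity $G$-saturated neighborhoods of every closed orbit covering $X$. The only differences are cosmetic: the paper argues directly rather than by contradiction (and cites that strongly \'etale morphisms preserve stabilizers), and your extra stratification/spreading-out layer is unnecessary since a saturated neighborhood of a closed orbit already contains every orbit degenerating to it; also note that the forbidden configuration in (P) requires $\supp(\ee)$ to consist of exactly the two vertices, which is automatic here because $\ee$ is sincere on the Ext-quiver.
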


\begin{proof}

Let $x\in X(\KK)$ be a point with closed orbit and $(\overline{Q'},\ee)$
the associated Ext-quiver. Then by Theorem \ref{Thm/LocMod2CY}, there
exists an affine $\GL(\ee)$-variety $W$, with a fixed point $w$
and a commutative diagram:\[
\begin{tikzcd}[ampersand replacement = \&]
(\mu_{Q',\ee}^{-1}(0)\times^{\GL(\ee)}G,[0,\Id]) \ar[d] \& (W\times^{\GL(\ee)}G,[w,\Id]) \ar[l]\ar[r]\ar[d] \& (X,x) \ar[d] \\
(M_{\Pi_{Q'},\ee},0) \& (W\git\GL(\ee),w) \ar[l]\ar[r] \& (M,x)
\end{tikzcd}
\]such that the horizontal maps are étale and the squares are cartesian.
Since the set of simple objects is dense in $M$, one can find a simple
object in the image of $(W\git\GL(\ee),w)\rightarrow(M,x)$. Moreover,
strongly étale morphisms preserve stabilizers (see \cite[Lem. 2.6.]{Bud21}),
so we can find a semisimple point in $\mu_{Q',\ee}^{-1}(0)$ whose
stabilizer is $\KK^{\times}$ i.e. there is a simple $\Pi_{Q'}$-module
of dimension $\ee$. From this and assumption 1., we deduce that $Q'$
satisfies property (P).

Therefore, by Theorem \ref{Thm/MainRes} and Lemmas \ref{Lem/LciSgDesc},
\ref{Lem/RatSgDesc}, any semisimple point of $X$ has a $G$-saturated
neighborhood which is locally complete intersection and has rational
singularities. Since $X$ is covered by such neighborhoods, the conclusion
follows. \end{proof}

We now give a few examples and apply our result to moduli of coherent
sheaves on K3 surfaces and representations of multiplicative preprojective
algebras.

\subsubsection*{$\Lambda^{q}(Q)$-modules}

In the case of the multiplicative preprojective algebra $\Lambda^{q}(Q)$,
the auxiliary quivers are computed as in the additive case from $Q$
itself. Therefore, if $Q$ is totally negative, then all auxiliary
quivers are. The variety $R(\Lambda^{\alpha}(Q),\dd)$ is also known
to be an affine complete intersection, by \cite[Thm. 1.11.]{CBS06}.
Thus, we obtain:

\begin{cor} \label{Cor/RatSgMultPreproj}

Let $Q$ be a quiver and $\dd\in\mathbb{N}^{Q_{0}}\setminus{0}$ such
that $(Q,\dd)$ has property (P). Then $R(\Lambda^{q}(Q),\dd)$ is
a complete intersection and has rational singularities.

\end{cor}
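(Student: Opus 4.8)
The plan is to deduce Corollary \ref{Cor/RatSgMultPreproj} directly from Theorem \ref{Thm/RatSgTotNeg2CY} applied to the moduli stack $\mathfrak{M}_{\Lambda^{q}(Q)}(\dd) = [R(\Lambda^{q}(Q),\dd)/\GL(\dd)]$, so the main task is to verify that this stack satisfies the two hypotheses of that theorem. First I would note that, since $(Q,\dd)$ has property (P), $Q$ is totally negative, so by Kaplan--Schedler's theorem \cite[Thm. 1.2.]{KS19} the algebra $\Lambda^{q}(Q)$ is $2$-Calabi-Yau and $\Lambda^{q}(Q)$-$\mathrm{Mod}$ is a $2$-Calabi-Yau category in the sense of Definition \ref{Def/2CYCat}; combined with the description of local models from Theorem \ref{Thm/LocMod2CY} (available here via the $G$-equivariant Artin approximation argument indicated in the paragraph preceding that theorem), this shows $[R(\Lambda^{q}(Q),\dd)/\GL(\dd)]$ is a quotient stack coming from a $2$-Calabi-Yau category.

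Next I would check hypothesis 1 of Theorem \ref{Thm/RatSgTotNeg2CY}: for any semisimple $\Lambda^{q}(Q)$-module $M = \bigoplus_{i=1}^{r} M_i^{\oplus e_i}$ of dimension $\dd$, with $\dd_i := \dim(M_i)$, the Ext-quiver $\overline{Q'}$ has $\ext^1_{\Lambda^q(Q)}(M_i,M_j)$ arrows from $i$ to $j$, and by the Euler-form identity $\hom - \ext^1 + \ext^2 = (\dim(M),\dim(N))$ together with $\hom(M_i,M_i)=1$ (stable summands) and the $2$-Calabi-Yau duality $\ext^2(M_i,M_j) \simeq \hom(M_j,M_i)^{\vee}$, one computes that the number of arrows between distinct vertices $i \ne j$ is $-(\dd_i,\dd_j) > 0$ and the number of loops at $i$ is $2(1-\langle\dd_i,\dd_i\rangle) \ge 2$ — exactly as in the additive case. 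Hence $Q'$ is totally negative, i.e. $\overline{Q'}$ is the double of a totally negative quiver. This is the "auxiliary quivers are computed as in the additive case" remark, and it is essentially bookkeeping with the Euler form, so I would keep it brief.

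Hypothesis 2 requires that simple objects are dense in $M_{\Lambda^q(Q)}(\dd)$. Here I would invoke Proposition \ref{Prop/TotNegSimp} (in its form for multiplicative preprojective algebras — the analogous existence result is due to Crawley-Boevey--Shaw and uses the same combinatorics on the fundamental region that appears in the proof of Proposition \ref{Prop/TotNegSimp}): since $(Q,\dd)$ has property (P), there is a simple $\Lambda^q(Q)$-module of dimension $\dd$. More is true: property (P) is preserved under passing to auxiliary quivers, so simple modules exist along the open dense stratum; since the locus of simples is open in $M_{\Lambda^q(Q)}(\dd)$ and nonempty, and $M_{\Lambda^q(Q)}(\dd)$ is irreducible (by the analog of Corollary \ref{Prop/GeoMomMap} for $R(\Lambda^q(Q),\dd)$, cf. \cite{CBS06}), it is dense. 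I expect the main obstacle to be precisely pinning down this irreducibility/density input: one must be a little careful because $R(\Lambda^q(Q),\dd) \subseteq R(\overline{Q},\dd)$ is only locally closed, so I would cite the relevant structural results of Crawley-Boevey--Shaw \cite{CBS06} and Kaplan--Schedler \cite{KS19} rather than reprove them. With both hypotheses in hand, Theorem \ref{Thm/RatSgTotNeg2CY} yields that $R(\Lambda^q(Q),\dd) = X$ is reduced and has rational singularities, which is the claim; Proposition \ref{Thm/MultPreprojIntro} from the introduction is then immediate, and one can also note that by Boutot's theorem \cite[Corollaire]{Bou87} the GIT quotient $M_{\Lambda^q(Q)}(\dd)$ inherits rational singularities.
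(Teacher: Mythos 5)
Your proposal is correct and takes essentially the same route as the paper: apply Theorem \ref{Thm/RatSgTotNeg2CY}, note that Ext-quivers of semisimple $\Lambda^{q}(Q)$-modules are computed from the Euler form of $Q$ exactly as in the additive case (hence are doubles of totally negative quivers), and obtain density of simple points from property (P) via Proposition \ref{Prop/TotNegSimp} together with the results of Crawley-Boevey \cite{CB01a} and Crawley-Boevey--Shaw \cite{CBS06}. One small correction: since $Q$ is totally negative we have $\langle\dd_i,\dd_i\rangle\le-1$, so the number of loops of $\overline{Q'}$ at a vertex is $2(1-\langle\dd_i,\dd_i\rangle)\ge4$; the bound $\ge2$ you state would give only one loop in $Q'$ and would not by itself imply total negativity of $Q'$.
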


\begin{proof}

We apply Theorem \ref{Thm/RatSgTotNeg2CY}. It remains to check assumption
2. Since $(Q,\dd)$ has property (P), there exists a simple $\Pi_{Q}$-module
of dimension $\dd$. Thus by \cite[Thm. 1.2.]{CB01} and \cite[Thm. 1.11.]{CBS06},
simple points form an open dense subset of $R(\Lambda^{q}(Q),\dd)$,
hence a dense subset of $M_{\Lambda^{q}(Q),\dd}$. \end{proof}

\subsubsection*{Semistable coherent sheaves on a K3 surface}

In the case of sheaves on a K3 surface, controlling Ext-quivers of
all polystable sheaves with given Mukai vector is more delicate. In
\cite{BZ19}, it was already mentioned that for many polystable sheaves,
one obtains the auxiliary quivers of the $g$-loop quiver. We spell
out conditions for this to be true. However, one can also obtain étale-local
models which are much more difficult to study. We illustrate this
with the case of one dimensional sheaves on an elliptic K3 surface.

Let $(S,H)$ be a complex, projective, polarized K3 surface. Given
a polystable sheaf $\mathcal{F}=\bigoplus_{i=1}^{r}\mathcal{F}_{i}^{\oplus e_{i}}$,
with distinct stable summands $\mathcal{F}_{i}$ of Mukai vectors
$\textbf{v}_{i}$, recall that the Ext-quiver $\overline{Q'}$ has
the following number of arrows from $i$ to $j$:\[
\ext^1(\mathcal{F}_{i},\mathcal{F}_{j})=
\left\{
\begin{array}{ll}
2+\textbf{v}_{i}\cdot\textbf{v}_{i}, & \text{ if } i=j, \\
\textbf{v}_{i}\cdot\textbf{v}_{j}, & \text{ if } i\ne j.
\end{array}
\right.
\]$Q'$ is totally negative if, and only if, $\textbf{v}_{i}\cdot\textbf{v}_{j}>0$
for all $i,j$. To see this, one can simply check when $Q'$ has at
least two loops at each vertex and one arrow joining any pair of vertices
(recall from Section \ref{Subsect/Mod2CY} that $\textbf{v}_{i}\cdot\textbf{v}_{i}$
is even). Otherwise, one can also notice that:\[
(\dd,\ee)_{Q'}=-\left(\sum_i d_i\textbf{v}_{i}\right)\cdot\left(\sum_i e_i\textbf{v}_{i}\right).
\]We now give examples of auxiliary quivers arising from $\mathfrak{M}_{S,H}(\textbf{v})$.
We rely on criteria for the existence of (semi)stable sheaves of a
given Mukai vector, due to Yoshioka \cite[Thm. 8.1.]{Yos01} (see
also \cite[\S2.4.]{KLS06}).

\begin{df} \label{Def/PrimPosMukVec}

A Mukai vector $\textbf{v}$ is called primitive if it cannot be written
$\textbf{v}=m\textbf{v}_{0}$, for some other Mukai vector $\textbf{v}_{0}$
and $m\geq2$.

A primitive vector $\textbf{v}=(r,\textbf{c},a)$ is called positive
if $\textbf{v}\cdot\textbf{v}\geq-2$ and one of the following holds:
\begin{itemize}
\item $r>0$ and $\textbf{c}\in\mathrm{NS}(S)$,
\item $r=0$, $\textbf{c}\in\mathrm{NS}(S)$ is effective and $a\ne0$,
\item $r=0$, $\textbf{c}=0$ and $a>0$.
\end{itemize}
\end{df}

Let $\text{Amp}(S)$ be the space of polarizations of $S$ and $\textbf{v}$
a primitive, positive Mukai vector. One can associate walls in $\text{Amp}(S)\otimes_{\mathbb{Z}}\RR$
to $\textbf{v}$ i.e. subspaces of real codimension one, such that,
for $H$ lying outside these walls, there are no strictly semistable
sheaves with Mukai vector $\textbf{v}$ (see \cite[\S1.4.]{Yos01}
for $\rk(\textbf{v})=0$ and \cite[Ch. 4.C.]{HL10} for $\rk(\textbf{v})>0$).
Those polarizations are called $\textbf{v}$-generic. The existence
results for semistable sheaves are summed up in the following:

\begin{prop}{\cite[Thm. 2.2-4.]{AS18}} \label{Prop/ExisStabShK3}

If $\textbf{v}$ is a primitive, positive Mukai vector, then $M(\textbf{v})\ne\emptyset$.
If moreover $H$ is $\textbf{v}$-generic, then $M(\textbf{v})=M^{s}(\textbf{v})$.

\end{prop}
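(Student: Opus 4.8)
This is a recollection of Yoshioka's existence theorem, so strictly speaking one would simply cite it; but the statement has two clearly separated halves and I would organize the argument accordingly. The equality $M(\mathbf{v})=M^s(\mathbf{v})$ for $\mathbf{v}$-generic $H$ is essentially a tautology once the wall-and-chamber structure is in place: since $\mathbf{v}$ is primitive, a strictly $H$-semistable sheaf $\mathcal{F}$ with $\mathbf{v}(\mathcal{F})=\mathbf{v}$ has a proper subsheaf $\mathcal{F}_1$ of the same reduced Hilbert polynomial whose Mukai vector $\mathbf{v}(\mathcal{F}_1)$ is not proportional to $\mathbf{v}$ (a proper nonzero integral sub-vector of a primitive vector cannot be proportional to it). The $H$-semistable sheaves $\mathcal{F}_1$ and $\mathcal{F}/\mathcal{F}_1$ satisfy the Bogomolov-type bound $\mathbf{v}(\mathcal{F}_i)\cdot\mathbf{v}(\mathcal{F}_i)\geq -2$, and boundedness of semistable sheaves with fixed Mukai vector leaves only finitely many such splittings of $\mathbf{v}$ in any bounded region of $\mathrm{Amp}(S)\otimes_{\mathbb{Z}}\RR$; each cuts out a codimension-one wall, namely the locus where $\mathbf{v}(\mathcal{F}_1)$ and $\mathbf{v}$ have proportional reduced Hilbert polynomials. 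A $\mathbf{v}$-generic $H$ lies off all of them, so no strictly semistable sheaf of Mukai vector $\mathbf{v}$ exists --- which is exactly what $\mathbf{v}$-genericity was defined to mean. (Primitivity is essential: for $\mathbf{v}=m\mathbf{v}_0$ the sheaf $\mathcal{G}^{\oplus m}$ with $\mathcal{G}$ stable of vector $\mathbf{v}_0$ is strictly semistable for every $H$.)

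For non-emptiness I would follow Yoshioka's reduction. The moduli spaces $M_H(\mathbf{v})$ fit into families over the connected moduli of polarized K3 surfaces along which $\mathbf{v}$ stays monodromy invariant, and ``$M_H(\mathbf{v})\neq\emptyset$'' is open and closed in such a family (properness of the moduli functor of semistable sheaves), so one may specialize to an elliptic K3 surface $\pi\colon S\to\mathbb{P}^1$ with a section. On such an $S$ the relative Fourier--Mukai transform along the compactified Jacobian of $\pi$ is an autoequivalence of $D^b(S)$ whose action on the Mukai lattice lowers the fibre degree of $\mathbf{v}$; composing it with spherical twists along $(-2)$-classes and with twists by line bundles --- which realize the lattice reflections needed to keep $\mathbf{v}$ in the positive cone while preserving $\mathbf{v}\cdot\mathbf{v}$ --- it brings $\mathbf{v}$, after finitely many steps, to a standard vector: $\mathbf{v}(I_Z)=(1,0,1-n)$ with $Z\in\mathrm{Hilb}^n(S)$ when $\mathbf{v}\cdot\mathbf{v}=2n-2\geq 0$ (non-empty, and automatically $H$-stable as a rank-one torsion-free sheaf); $\mathbf{v}(\mathcal{O}_S)$ when $\mathbf{v}\cdot\mathbf{v}=-2$; or $(0,C,a)$ with $C$ effective when $\rk(\mathbf{v})=0$, realised by pure one-dimensional sheaves on members of $|C|$, e.g. pushforwards of line bundles from a smooth curve in $|C|$. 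Positivity of $\mathbf{v}$ (Definition \ref{Def/PrimPosMukVec}) is precisely what keeps one inside this list of effective/non-negative cases. Transporting such a sheaf back through the chain of autoequivalences, and if necessary moving $H$ inside its chamber (harmless for non-emptiness), produces a semistable sheaf of Mukai vector $\mathbf{v}$ on the original $(S,H)$.

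The remaining ingredients are standard: Mukai's computation that $M^s(\mathbf{v})$ is smooth of dimension $\mathbf{v}\cdot\mathbf{v}+2$, with tangent and obstruction spaces $\Ext^1(\mathcal{F},\mathcal{F})$ and $\Ext^2(\mathcal{F},\mathcal{F})\cong\Hom(\mathcal{F},\mathcal{F})^{\vee}$; projectivity of $M(\mathbf{v})$ as a GIT quotient; and the wall bookkeeping used above.

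The main obstacle is the non-emptiness step. Fourier--Mukai transforms preserve (semi)stability only for sufficiently generic polarizations, and a priori they change which polarizations are generic, so the induction must either track the chamber precisely or --- as is customary --- argue only about the \emph{existence} of a semistable object and invoke deformation-invariance to repair genericity; moreover the rank-zero base case genuinely rests on the geometry of linear systems and compactified Jacobians on K3 surfaces, which is where Yoshioka's analysis is heaviest. For a complete treatment I would invoke \cite[Thm. 8.1.]{Yos01}, and for the precise packaging used here \cite{KLS06} and \cite{AS18}.
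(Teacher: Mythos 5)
The paper offers no proof of this proposition: it is imported verbatim as a known result, cited from \cite[Thm. 2.2-4.]{AS18} and ultimately Yoshioka \cite[Thm. 8.1.]{Yos01}, with $\textbf{v}$-genericity defined in the preceding paragraph precisely so that the second half ($M(\textbf{v})=M^{s}(\textbf{v})$ off the walls) holds by construction. Your proposal takes essentially the same approach --- cite Yoshioka/\cite{AS18}, observe that the generic-polarization statement is built into the wall-and-chamber definition, and sketch Yoshioka's deformation-to-elliptic-K3 and Fourier--Mukai reduction for non-emptiness --- and nothing in your outline conflicts with the references the paper relies on.
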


In general, there is no reason for auxiliary quivers of all polystable
sheaves in $M(\textbf{v})$ to be totally negative. However, if we
choose a moduli space $M(\textbf{v})$ of high enough dimension and
a generic polarization, then the auxiliary quiver of any polystable
sheaf in $M(\textbf{v})$ is totally negative (and it arises from
a $g$-loop quiver). This was already observed in \cite{BZ19}.

\begin{prop} \label{Prop/ExtQuivK3}

Let $\textbf{v}=m\textbf{v}_{0}$, where $\textbf{v}_{0}$ is a primitive,
positive Mukai vector and assume that $H$ is $\textbf{v}_{0}$-generic.
Suppose also that $\textbf{v}\cdot\textbf{v}>0$. Then for any polystable
sheaf $\mathcal{F}=\bigoplus_{i}\mathcal{F}_{i}^{\oplus e_{i}}$ with
Mukai vector $\textbf{v}$, the auxiliary quiver of $\mathcal{F}$
is an auxiliary quiver of some $g$-loop quiver, where $g\geq2$.

\end{prop}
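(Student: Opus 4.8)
The plan is to show that under the hypotheses, every stable summand $\mathcal{F}_i$ of a polystable sheaf with Mukai vector $\textbf{v} = m\textbf{v}_0$ must itself have Mukai vector a multiple of $\textbf{v}_0$, so that the auxiliary quiver is forced to be a $g$-loop quiver with $g \geq 2$. First I would use the Mukai vector decomposition: if $\mathcal{F} = \bigoplus_i \mathcal{F}_i^{\oplus e_i}$ with $\mathcal{F}_i$ stable of Mukai vector $\textbf{v}_i$, then $\sum_i e_i \textbf{v}_i = \textbf{v} = m\textbf{v}_0$. Since $H$ is $\textbf{v}_0$-general and the $\mathcal{F}_i$ are stable (in particular semistable) with the same reduced Hilbert polynomial as $\textbf{v}_0$, each $\textbf{v}_i$ is proportional to $\textbf{v}_0$; writing $\textbf{v}_i = \lambda_i \textbf{v}_0$ with $\lambda_i \in \mathbb{Q}_{>0}$, and noting that $\textbf{v}_0$ is primitive, one gets $\lambda_i \in \mathbb{Z}_{>0}$, hence $\textbf{v}_i = n_i\textbf{v}_0$ with $n_i \geq 1$ and $\sum_i e_i n_i = m$.

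Next I would compute the Ext-quiver. For $i \neq j$, $\ext^1(\mathcal{F}_i,\mathcal{F}_j) = \textbf{v}_i \cdot \textbf{v}_j = n_i n_j (\textbf{v}_0 \cdot \textbf{v}_0)$. Now $\textbf{v}_0 \cdot \textbf{v}_0$ is an even integer (by the $2$-Calabi-Yau remark preceding Definition \ref{Def/PrimPosMukVec}, since $\ext^1(\mathcal{F}_0,\mathcal{F}_0) = 2 + \textbf{v}_0\cdot\textbf{v}_0$ is even), and I claim it is positive: from $\textbf{v}\cdot\textbf{v} = m^2(\textbf{v}_0\cdot\textbf{v}_0) > 0$ we get $\textbf{v}_0\cdot\textbf{v}_0 \geq 2$. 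Therefore $\ext^1(\mathcal{F}_i,\mathcal{F}_j) \geq 2 > 0$ for all pairs of vertices, so $Q'$ has at least one arrow between any two distinct vertices. For loops, $\ext^1(\mathcal{F}_i,\mathcal{F}_i) = 2 + \textbf{v}_i\cdot\textbf{v}_i = 2 + n_i^2(\textbf{v}_0\cdot\textbf{v}_0) \geq 2 + 2 = 4 \geq 2$, so there are at least two loops at each vertex. Hence $Q'$ is totally negative (using the criterion from Section \ref{Subsect/MomMap} characterizing total negativity), and in particular, since all vertices have at least two loops and the graph is complete, $Q'$ is an auxiliary quiver of a $g$-loop quiver with $g = \min_i (\text{number of loops at } i)/\ldots$; more precisely, I would invoke Proposition \ref{Prop/MomMapEtSlice} / the construction of auxiliary quivers: a $g$-loop quiver $S_g$ with dimension vector $\ee$ of total size $r$ gives an auxiliary quiver at $\tau_{\min}$ with exactly $r$ vertices, $2g$ loops at each vertex (since $\langle e_i\epsilon, e_i\epsilon\rangle$ vanishes appropriately) — here I should instead simply note that any complete quiver with $\geq 2$ loops at every vertex is the double of $S_g$'s auxiliary quiver for suitable $g$, or rephrase the conclusion to match exactly what \cite{BZ19} states.

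The main obstacle I anticipate is the bookkeeping to identify $Q'$ precisely as "an auxiliary quiver of some $g$-loop quiver" rather than merely "totally negative": I need the number of loops to be uniform across vertices and to match the shape $(Q_\tau)$ produces from $S_g$. The cleanest route is probably to observe that when all $\textbf{v}_i = n_i\textbf{v}_0$ the intersection form on the sublattice $\mathbb{Z}\textbf{v}_0$ is just multiplication by $\textbf{v}_0\cdot\textbf{v}_0$, so the auxiliary quiver $(Q',\ee)$ with $\ee_i = e_i$ is literally the $\tau$-auxiliary quiver of $(S_g, \dd)$ where $g$ and $\dd$ are chosen so that $S_g$ has $g$ loops, a simple $S_g$-module of dimension $n$ has self-Euler-form matching $\textbf{v}_0\cdot\textbf{v}_0$ via $1 - \langle n, n\rangle_{S_g} = 1 - n^2(1-g)$, and the semisimple type has parts $(n_i, e_i)$ — one solves $2(1 - n_i^2(1-g)) = 2 + n_i^2(\textbf{v}_0\cdot\textbf{v}_0)$ for $g$, which forces $g = 1 + (\textbf{v}_0\cdot\textbf{v}_0)/2 + $ correction, and checks $g \geq 2$ from $\textbf{v}_0\cdot\textbf{v}_0 \geq 2$. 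I would double-check this identification carefully and, if it does not come out cleanly for mixed $n_i$, fall back on the weaker but sufficient statement that $Q'$ is totally negative with the appropriate loop/edge counts, which is all that is needed for the downstream application of Theorem \ref{Thm/MainRes}.
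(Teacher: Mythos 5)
Your proposal is correct and takes essentially the same route as the paper: the $\textbf{v}_{0}$-genericity of $H$ (via the wall structure, as in \cite[\S 2.4.]{KLS06}) forces each stable summand's Mukai vector to be proportional to $\textbf{v}_{0}$, and the Ext-quiver counts $2+\textbf{v}_{i}\cdot\textbf{v}_{i}$ and $\textbf{v}_{i}\cdot\textbf{v}_{j}$ are then identified with the auxiliary quiver of $S_{g}$ for the semisimple type given by the multiplicities, with $2g-2$ coming from the positive even self-pairing. The only (cosmetic) difference is that you use primitivity to write $\textbf{v}_{i}=n_{i}\textbf{v}_{0}$ and set $2g-2=\textbf{v}_{0}\cdot\textbf{v}_{0}$, whereas the paper writes $\textbf{v}_{i}=(m_{i}/N)\textbf{v}$ and gets $2g-2=\textbf{v}\cdot\textbf{v}/N^{2}$ by a coprimality argument; note that your anticipated extra correction term is not needed, since the equation gives $g=1+(\textbf{v}_{0}\cdot\textbf{v}_{0})/2$ exactly and the identification (loops $1+n_{i}^{2}(g-1)$, arrows $2n_{i}n_{j}(g-1)$ in the double) works cleanly even for mixed $n_{i}$, so your fallback to mere total negativity is unnecessary.
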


\begin{proof}

Call $\mathbf{v}_{i}$ the Mukai vector of $\mathcal{F}_{i}$. We
leave out the obvious case where $\mathcal{F}$ is stable. Since $H$
is $\textbf{v}_{0}$-generic and by construction of walls, there exist
$r_{i}\in\mathbb{Q}$ such that $\mathbf{v}_{i}=r_{i}\mathbf{v}$
(see \cite[\S2.4.]{KLS06}). Since $\mathbf{v}_{0}$ is primitive,
$\mathbf{v}_{i}=m_{i}\mathbf{v}_{0}$ for some $m_{i}\in\mathbb{Z}$.
Moreover, $\mathbf{v}_{0}\cdot\mathbf{v}_{0}$ is a positive even
integer, hence $\mathbf{v}_{0}\cdot\mathbf{v}_{0}=2g-2$, with $g\geq2$.
Consequently, the auxiliary quiver of $\mathcal{F}$ has $1+m_{i}^{2}(g-1)$
loops at each vertex and $2m_{i}m_{j}(g-1)$ arrows joining vertices
$i\ne j$. One recognizes the auxiliary quiver of the $g$-loop quiver
for the semisimple type $(m_{i},e_{i}\ ;\ 1\leq i\leq r)$ (see \cite[Prop. 2.26.]{Bud21}).
\end{proof}

In that case, \cite[Thm. 1.1.]{Bud21} yields the following corollary,
already observed in \cite{BZ19}:

\begin{cor}

Under the hypotheses of Proposition \ref{Prop/ExtQuivK3}, $U_{\textbf{v},m_{\textbf{v}}}^{\mathrm{ss}}$
is locally complete intersection and has rational singularities.

\end{cor}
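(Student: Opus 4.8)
The plan is to reduce the statement directly to Budur's main theorem for the $g$-loop quiver via the \'etale-local model of Theorem \ref{Thm/LocMod2CY}, exactly as in the proof of Theorem \ref{Thm/RatSgTotNeg2CY}. First I would recall that $U_{\mathbf{v},m}$ is the scheme underlying the quotient-stack presentation $\mathfrak{M}_{S,H}(\mathbf{v})=[U_{\mathbf{v},m}/\GL(P(m))]$, which is a quotient stack coming from the $2$-Calabi-Yau category $\mathrm{Coh}(S)$ (Serre duality on a K3). So to prove that $U_{\mathbf{v},m}$ is reduced with rational singularities, it suffices — by Lemma \ref{Lem/RatSgDesc} and the fact that these properties are \'etale-local and stable under the smooth maps $W\times^{\GL(\ee)}\GL(P(m))\to U_{\mathbf{v},m}$ — to check that for every polystable sheaf $\mathcal{F}$ with Mukai vector $\mathbf{v}$, the associated $\mu_{Q',\ee}^{-1}(0)$ has rational singularities and is reduced, where $(\overline{Q'},\ee)$ is the Ext-quiver of $\mathcal{F}$.

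Next I would invoke Proposition \ref{Prop/ExtQuivK3}: under the hypotheses (so $\mathbf{v}=m\mathbf{v}_0$ with $\mathbf{v}_0$ primitive positive, $H$ being $\mathbf{v}_0$-generic, and $\mathbf{v}\cdot\mathbf{v}>0$), the Ext-quiver of any polystable $\mathcal{F}=\bigoplus_i\mathcal{F}_i^{\oplus e_i}$ with Mukai vector $\mathbf{v}$ is precisely an auxiliary quiver $(\overline{(S_g)_\tau},\ee)$ of the $g$-loop quiver $S_g$ for the semisimple type $\tau=(m_i,e_i\ ;\ 1\le i\le r)$, with $g\ge 2$. Since $g\ge 2$, $S_g$ is totally negative, and hence so is its auxiliary quiver $(S_g)_\tau$ (total negativity is preserved under passing to auxiliary quivers, as noted before Theorem \ref{Thm/MainRes}, using \cite[Prop. 2.11.]{Bud21}). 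One still needs property (P) for the pair $((S_g)_\tau,\ee)$; when $\ee=\underline 1$ or $r=1$ this is Budur's smooth/simple case directly, and otherwise property (P) for $((S_g)_\tau,\ee)$ follows because the type $\tau$ arises from an actual polystable sheaf, so there is a corresponding semisimple point and the exceptional $(1,1)$-configuration of Definition \ref{Def/Prop(P)} is excluded exactly as in the proof of Theorem \ref{Thm/RatSgTotNeg2CY}.

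Then Theorem \ref{Thm/MainRes} (equivalently \cite[Thm. 1.1.]{Bud21} for the $g$-loop quiver, which is the special case actually needed here) gives that $\mu_{(S_g)_\tau,\ee}^{-1}(0)$ has rational singularities, and Corollary \ref{Prop/GeoMomMap} gives that it is reduced. Transporting this back through the cartesian diagram of Theorem \ref{Thm/LocMod2CY} via Lemma \ref{Lem/RatSgDesc}, every point of $U_{\mathbf{v},m}$ with closed $\GL(P(m))$-orbit has a $\GL(P(m))$-saturated open neighborhood that is reduced with rational singularities; since $U_{\mathbf{v},m}$ is covered by such neighborhoods, it is reduced with rational singularities. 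The only genuinely delicate point — and it is already isolated as Proposition \ref{Prop/ExtQuivK3}, so for this Corollary it is a black box — is the identification of the Ext-quiver of an arbitrary polystable sheaf with an auxiliary quiver of $S_g$, which uses the wall-crossing structure of $\mathrm{Amp}(S)$ and Yoshioka's existence results; everything after that is a routine application of the machinery assembled above.
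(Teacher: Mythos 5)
Your proposal is correct and takes essentially the same route as the paper: the paper's proof is precisely the observation that Proposition \ref{Prop/ExtQuivK3} identifies all Ext-quivers with auxiliary quivers of $S_g$, $g\geq2$, after which Budur's theorem (equivalently Theorem \ref{Thm/MainRes}) combined with the \'etale-local models of Theorem \ref{Thm/LocMod2CY} and Lemma \ref{Lem/RatSgDesc} transfers reducedness and rational singularities to $U_{\mathbf{v},m}$, exactly as in the proof of Theorem \ref{Thm/RatSgTotNeg2CY}. One small correction to an inessential step: your justification of property (P) for $((S_g)_\tau,\ee)$ via ``the type $\tau$ arises from an actual polystable sheaf'' is not by itself sufficient (in Theorem \ref{Thm/RatSgTotNeg2CY} the exclusion of the exceptional $(1,1)$-configuration used density of \emph{simple} objects, not just existence of a semisimple point); but no such argument is needed here, since $(S_g,\dd)$ has property (P) vacuously (one vertex, $g\geq2$ loops) and (P) is preserved under taking auxiliary quivers, as noted at the start of Section \ref{Sect/MainRes} --- equivalently, the Ext-quiver computed in Proposition \ref{Prop/ExtQuivK3} has $2m_im_j(g-1)\geq2$ arrows between any two distinct vertices, so the exceptional configuration cannot occur.
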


In \cite[\S6.]{AS18}, Arbarello and Saccà analyzed Ext-quivers of
a pure, one-dimensional torsion sheaf $\mathcal{F}$ with primitive,
positive Mukai vector $\textbf{v}=(0,\textbf{[D]},\chi)$ ($D$ effective,
$\chi\ne0$). Write $D=\sum_{i\in I}n_{i}D_{i}$ where $n_{i}\geq0$
and $D_{i}$ is integral. Then the auxiliary quiver of $\mathcal{F}$
has vertex set $I$, $g(D_{i})$ loops at vertex $i$ and $D_{i}\cdot D_{j}$
arrows joining vertices $i$ and $j$. The following example illustrates
how difficult the geometry of étale-local models of $\mathfrak{M}_{S,H}(\textbf{v})$
gets.

\begin{exmp}

Let $\textbf{v}=(0,\textbf{[D]},1)$, where $D$ is an elliptic curve
in $S$ (for instance, suppose $S$ is an elliptic K3 surface and
$D$ is a generic fibre). Then $\textbf{v}$ is primitive, positive,
$\textbf{v}\cdot\textbf{v}=2g(D)-2=0$ and taking a $\textbf{v}$-generic
polarization, we obtain by Proposition \ref{Prop/ExisStabShK3} a
stable sheaf $\mathcal{F}$ with Mukai vector $\textbf{v}$. For any
$n\geq1$, the auxiliary quiver of $\mathcal{F}^{\oplus n}$ is the
Jordan quiver (one vertex with one loop), with dimension vector $n$.
The associated local model is the commuting scheme:\[
C_2:=V([x,y]_{i,j},\ 1\leq i,j\leq n)\subset\Mat(n,\KK)_x\times\Mat(n,\KK)_y
\]Commuting schemes have been studied by many authors and were only
recently proved to be normal and Cohen-Macaulay (see \cite{Cha20}
and the references therein). Whether commuting schemes have rational
singularities is a more difficult question and, as far as we know,
this problem is still open.

\end{exmp}

\subsection{Counts of jets and p-adic integrals \protect\label{Subsect/ArithmCor}}

In this section, we go back to our initial arithmetic problem on counts
of jets. We deduce two consequences of Theorem \ref{Subsect/RatSgTotNeg2CY}.
First, we give a partial answer to our original question on counts
of quiver representations over $\mathbb{F}_{q}[t]/(t^{n})$. We then
consider a general quotient stack $[X/G]$ coming from a totally negative
2-Calabi-Yau category and show that the counts of jets on $X$ converge
to the p-adic volume of a certain analytic manifold associated to
$X$, following \cite{AA16} and \cite{Yas17,COW24}.

\paragraph*{Convergence for counts of jets}

Let $Q$ be a quiver and $\dd\in\mathbb{N}^{Q_{0}}$. If $(Q,\dd)$
has property (P), we already know from Proposition \ref{Prop/GeoMomMap}
that $\mu_{Q,\dd}^{-1}(0)$ is a complete intersection. Thus we obtain
from Proposition \ref{Prop/JetCountLim}:

\begin{cor} \label{Cor/TotNegJetLim}

Let $Q$ be a quiver and $\dd\in\mathbb{N}^{Q_{0}}\setminus\{0\}$
such that $(Q,\dd)$ has property (P). Then for almost all primes
$p$ and for all finite fields $\mathbb{F}_{q}$ of characteristic
$p$, $q^{-n\dim\mu_{Q,\dd}^{-1}(0)}\cdot\sharp\mu_{Q,\dd}^{-1}(0)(\mathbb{F}_{q}[t]/(t^{n}))$
converges when $n$ goes to infinity.

\end{cor}

Now that we have established the existence of $\underset{n\rightarrow+\infty}{\lim}q^{-n\dim\mu_{Q,\dd}^{-1}(0)}\cdot\sharp\mu_{Q,\dd}^{-1}(0)(\mathbb{F}_{q}[t]/(t^{n}))$
for all totally negative quivers, it is natural to ask whether this
limit is uniform over all finite fields $\mathbb{F}_{q}$, as shown
by Wyss in the case $\dd=\underline{1}$.

\begin{cj}

Let $Q$ be a quiver and $\dd\in\mathbb{N}^{Q_{0}}\setminus\{0\}$
such that $(Q,\dd)$ has property (P). There exists a rational fraction
$W\in\mathbb{Q}(T)$ such that, for almost all primes $p$ and for
all finite fields $\mathbb{F}_{q}$ of characteristic $p$:\[
W(q)=\underset{n\rightarrow+\infty}{\lim}q^{-n\dim \mu_{Q,\dd}^{-1}(0)}\cdot\sharp\mu_{Q,\dd}^{-1}(0)(\mathbb{F}_{q}[t]/(t^{n})).
\]\end{cj}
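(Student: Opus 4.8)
The plan is to reduce the conjecture to a uniformity statement for the local Igusa zeta function and then to attack that statement by an explicit stratification. By Corollary \ref{Cor/TotNegJetLim}, for almost all $p$ the limit
\[
W_{Q,\dd}(q):=\lim_{n\to\infty}q^{-n\dim X(Q,\dd)}\cdot\sharp X(Q,\dd)(\mathbb{F}_q[t]/(t^n))
\]
exists, and fixing generators $X(Q,\dd)=V(f_1,\dots,f_m)\subseteq\mathbb{A}_{\mathbb{Z}}^r$, Proposition \ref{Prop/JetCountLim} identifies it with the residue $W_{Q,\dd}(q)=\frac{-1}{q^m-1}\cdot\Res_{T=q^m}Z_f$, where $Z_f=Z_f(s,q)$ is the Igusa zeta function and $T=q^{-s}$. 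So it suffices to produce a single $W\in\mathbb{Q}(T)$ with $W(q)=\frac{-1}{q^m-1}\cdot\Res_{T=q^m}Z_f$ for almost all $p$ and all $\mathbb{F}_q$; equivalently, to show that $Z_f$ is a \emph{uniform} $p$-adic integral whose top-pole residue is of Tate type (its $q$-dependence being that of a fixed rational function).

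Two complementary routes suggest themselves. The first goes through Denef's formula: choose an embedded resolution $p\colon Y\to\mathbb{A}_{\mathbb{Z}[1/N]}^r$ of $X(Q,\dd)$, with (exceptional and strict-transform) divisors $(E_i)_{i\in I}$ carrying numerical data $(N_i,\nu_i)$. Denef's formula (cf.\ \cite{Den87}, \cite{VZG08}) then writes $Z_f(s,q)$ as a finite sum over subsets $J\subseteq I$ of terms $\sharp E_J^\circ(\mathbb{F}_q)$ times an explicit elementary rational function of $q$ and $q^{-s}$, where $E_J^\circ$ is the locally closed stratum on which exactly the $E_i$, $i\in J$, meet; the residue at $T=q^m$ then depends only on the finitely many $E_J^\circ$ contributing the leading pole, together with their point counts. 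Hence the conjecture would follow from (i) the existence of such a resolution over $\mathbb{Z}[1/N]$ and (ii) the polynomial-count property --- in the sense of \cite{HLRV11} --- of those finitely many governing strata. The second route imitates Wyss's computation for $\dd=\underline{1}$: since $\mu_{Q,\dd}$ is linear in the $y$-variables, for fixed $x\in R(Q,\dd)$ the locus $\{y:\mu_{Q,\dd}(x,y)=0\}$ is the kernel of a linear map $L_x\colon R(Q^*,\dd)\to\mathfrak{gl}(\dd)$; integrating out $y$ in the $p$-adic integral defining $Z_f$, the contribution of $x$ over $\mathbb{F}_q[t]/(t^n)$ is governed by the elementary divisors of $L_x$. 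Stratifying $R(Q,\dd)$ by these elementary divisors then reduces $Z_f$, and its residue, to point counts of the resulting Smith-type determinantal strata in $R(Q,\dd)$.

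In both routes the crux --- and what I expect to be the main obstacle --- is the polynomial-count (Tate-type) property of the relevant strata. For $\dd=\underline{1}$ this is exactly what makes Wyss's formula work: the jumping loci of $L_x$ are complements of the graphical hyperplane arrangement of $Q$, whose point counts are specializations of the characteristic polynomial of the arrangement, manifestly in $\mathbb{Q}[q]$. For $\dd>\underline{1}$ the jumping loci are genuinely nonlinear determinantal subvarieties of a product of matrix spaces, and although $\sharp X(Q,\dd)(\mathbb{F}_q)$ is itself polynomial in $q$ (see, e.g., \cite{Moz11a}), the finer data entering $Z_f$ --- point counts of fixed-elementary-divisor strata, or of the strata of a resolution --- need not a priori be of Tate type. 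A natural inductive attack paralleling Section \ref{Subsect/EtaleSlices} would control the leading pole through the deepest stratum and its auxiliary quivers $(Q_\tau,\ee)$; but the étale slices are only local, whereas $Z_f$ is assembled from a $p$-adic integral over the whole of $\mathbb{A}^r$, so one would first need a motivic, globally valid refinement of the slice decomposition adapted to Igusa zeta functions. Establishing that refinement, or otherwise proving the Tate-type property of the governing strata, is where the real work lies. Short of it, one can at least record that the general machinery of motivic $p$-adic integration (Cluckers--Loeser) already shows $W_{Q,\dd}$ to be a ``motivic constant'', so that the conjecture is precisely the assertion that this constant lies in $\mathbb{Q}(\mathbb{L})$, i.e.\ is a rational function of $q$.
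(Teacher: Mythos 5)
This statement is a \emph{conjecture} in the paper: no proof is given there, so there is nothing to compare your argument against, and the relevant question is simply whether your proposal closes the problem. It does not, and you say so yourself. The first reduction is fine and is exactly the route suggested by the paper's own discussion: by Corollary \ref{Cor/TotNegJetLim} the limit exists for almost all $p$, and by Proposition \ref{Prop/JetCountLim} it equals $\frac{-1}{q^{m}-1}\Res_{T=q^{m}}Z_f$, so the conjecture is equivalent to the assertion that this residue is, uniformly in $p$ and $q$, the value at $q$ of one fixed element of $\mathbb{Q}(T)$. But the two routes you then sketch (Denef's formula for a resolution spread out over $\mathbb{Z}[1/N]$, or integrating out the $y$-variables and stratifying $R(Q,\dd)$ by the elementary divisors of $L_x$) both terminate at the same unproved assertion: the polynomial-count/Tate-type behaviour of the finitely many strata governing the leading pole. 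For $\dd=\underline{1}$ this is supplied by the hyperplane-arrangement combinatorics in \cite{Wys17}; for $\dd>\underline{1}$ the jumping loci are determinantal and no such count is known, and you explicitly flag this as ``where the real work lies.'' A plan whose crucial step is conceded to be open is not a proof, so the verdict is that there is a genuine gap --- indeed the gap \emph{is} the conjecture, and your reduction, while correct, does not advance beyond what Proposition \ref{Prop/JetCountLim} already records.

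Two smaller points. First, when you write $X(Q,\dd)=V(f_1,\dots,f_m)$ and invoke the residue at $T=q^{m}$, you must take $m$ to be the codimension, i.e.\ use the $\dd\cdot\dd-1$ independent equations obtained after discarding the trace relation $\sum_i\operatorname{tr}\mu_i=0$; with all $\dd\cdot\dd$ entries of $\mu_{Q,\dd}$ the variety is not a complete intersection on those generators and the pole location in Proposition \ref{Prop/JetCountLim} is misstated. Second, the closing remark that general motivic-integration machinery makes $W_{Q,\dd}$ a ``motivic constant'' is plausible but is asserted without any argument in the precise setting at hand, and in any case rationality in $\mathbb{L}$ (i.e.\ membership in $\mathbb{Q}(q)$ uniformly in $p$) is again precisely the content of the conjecture, so it cannot be cited as partial progress.
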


\paragraph*{Counts of jets and p-adic volumes of mildly singular schemes}

Another natural question is whether the quantity $\underset{n\rightarrow+\infty}{\lim}q^{-n\dim\mu_{Q,\dd}^{-1}(0)}\cdot\sharp\mu_{Q,\dd}^{-1}(0)(\mathbb{F}_{q}[t]/(t^{n}))$
has a geometric interpretation. We adress this by showing that the
limit can be interpreted as a $p$-adic integral, using earlier work
of Aizenbud-Avni \cite{AA16}. Our result holds for a wider class
of schemes $X$, including atlases of quotient stacks $[X/G]$ parametrizing
objects of totally negative 2-Calabi-Yau categories. We then identify
this p-adic integral to the canonical volume of $X$ built, for instance,
in \cite{Yas17,COW24}.

Consider $F$ some finite extension of $\mathbb{Q}_{p}$, with residue
field $\mathbb{F}_{q}$ and valuation ring $\mathcal{O}\subset F$.
Given a scheme $X$ which is Gorenstein, of pure dimension $d$ over
$\mathcal{O}$, one can construct a canonical measure $\mu_{\mathrm{can}}$
on $X^{\natural}:=X^{\mathrm{sm}}(F)\cap X(\mathcal{O})$. This canonical
measure is built by glueing measures obtained from gauge forms (i.e.
nowhere vanishing differential forms of top degree) on $X^{\mathrm{sm}}$
(the smooth locus of $X$ relative to $\mathcal{O}$) - see \cite[\S 4]{Yas17}
or \cite[\S 3]{COW24}. Let us briefly recall how this construction
works.

Suppose that the structure morphism $X\rightarrow\Spec(\mathcal{O})$
is Gorenstein, of pure dimension $d$ (see \cite[\href{https://stacks.math.columbia.edu/tag/0C02}{Tag 0C02}]{SP}).
Then $X$ admits a canonical invertible sheaf $\Omega_{X/\mathcal{O}}$,
which restricts to $\Omega_{X^{\mathrm{sm}}/\mathcal{O}}^{d}$ on
the smooth locus $X^{\mathrm{sm}}$ (see \cite[\S 3.5.]{Con00}).
Consider trivializing open subsets $V_{k}\subseteq X$ for $\Omega_{X/\mathcal{O}}$
and non-vanishing sections $\omega_{k}\in\Gamma(V_{k},\Omega_{X/\mathcal{O}})$.
Note that $X^{\natural}=\bigcup_{k}V_{k}^{\natural}$, since for $x\in X(\mathcal{O})$,
if $x_{\mathbb{F}_{q}}\in V_{k}(\mathbb{F}_{q})$, then $x\in V_{k}(\mathcal{O})$.
The sections $\omega_{k}$ induce measures $\nu_{k}$ on $V_{k}^{\natural}$
- see \cite[\S 3.1.]{AA16} or \cite[\S 3]{COW24} for the construction
of these measures. Moreover, for $x\in V_{k}^{\natural}\cap V_{l}^{\natural}$,
$\frac{\omega_{k}}{\omega_{l}}(x)\in\mathcal{O}^{\times}$. The change
of variables formula for $p$-adic integrals then implies that the
measures $\nu_{k}$ glue to a measure $\mu_{\mathrm{can}}$ which
does not depend on the choices of $\omega_{k}$.

Given a $\mathbb{Z}$-scheme $X$ such that $X_{\bar{\mathbb{Q}}}$
has l.c.i. and rational singularities, we show that the counts of
jets we consider converge to $\mu_{\mathrm{can}}(X^{\natural})$,
when the residual characteristic of $F$ is large enough. Let us first
state an intermediate result for appropriate $\mathcal{O}$-schemes:

\begin{lem} \label{Lem/JetCountCanMeas}

Let $X$ be a finite type $\mathcal{O}$-scheme. Assume that $X$
is flat over $\mathcal{O}$, with l.c.i. geometric fibers of pure
dimension $d$. Assume also that $X_{\bar{F}}$ has rational singularities.

Then there is a well-defined canonical measure $\mu_{\mathrm{can}}$
on $X^{\natural}=X^{\mathrm{sm}}(F)\cap X(\mathcal{O})$. Moreover,
the sequence $q^{-nd}\cdot\sharp X(\mathcal{O}/(\varpi^{n})),\ n\geq1$
converges and its limit is given by:\[
\underset{n\rightarrow +\infty}{\lim}\frac{\sharp X(\mathcal{O}/\varpi^{n})}{q^{nd}}
=
\mu_{\mathrm{can}}(X^{\natural}).
\]\end{lem}

\begin{proof}

Since $X\rightarrow\Spec(\mathcal{O})$ is flat, with l.c.i. geometric
fibers, it is a Gorenstein morphism (of pure dimension $d$, by assumption
- see \cite[\href{https://stacks.math.columbia.edu/tag/0C02}{Tag 0C02}]{SP}).
Then the construction above applies and $X^{\natural}$ can be endowed
with a canonical measure $\mu_{\mathrm{can}}$.

We now describe $X$ locally as a fiber of an appropriate morphism
$\varphi:\mathbb{A}_{\mathcal{O}}^{r}\rightarrow\mathbb{A}_{\mathcal{O}}^{m}$.
On the one hand, this allows us to rewrite counts of $\mathcal{O}/(\varpi^{n})$-points
as measures of $p$-adic balls with respect to the pushforward by
$\varphi$ of the Haar measure on $\mathcal{O}^{r}$, following \cite[\S 4.2.]{Wys17b}.
On the other hand, if $\varphi$ is a FRS morphism (see \cite[Def. II.]{AA16}),
we can apply a Fubini theorem by Aizenbud and Avni \cite[Thm. 3.16.]{AA16}
to compute this pushforward measure and relate it to the canonical
measure on $X$.

As $X\rightarrow\Spec(\mathcal{O})$ is flat, with l.c.i. geometric
fibers, we can cover $X$ with affine open subsets of the form: \[
U_i=\Spec(\mathcal{O}[T_1,\ldots,T_{r_i}]/(f_{1,i},\ldots,f_{m,i})),
\] whose nonempty fibers over $\mathcal{O}$ have dimension $d=r_{i}-m_{i}$
(see \cite[\href{https://stacks.math.columbia.edu/tag/01UB}{Tag 01UB}]{SP}).
Observe that $U_{i}$ is of pure dimension $d$ over $\mathcal{O}$
if, and only if, $(U_{i})_{\mathbb{F}_{q}}\ne\emptyset$, as $U_{i}$
is flat over $\mathcal{O}$. Moreover, $X^{\natural}=\bigcup_{i}U_{i}^{\natural}$
as noted above, and $\sharp X(\mathcal{O}/(\varpi^{n}))$ can be obtained
from $\sharp U_{i}(\mathcal{O}/(\varpi^{n}))$ by inclusion-exclusion
(see \cite[Lem. 3.1.1.]{AA18}). Therefore, the $U_{i}$ whose contribution
to $\mu_{\mathrm{can}}(X^{\natural})$ and $\sharp X(\mathcal{O}/(\varpi^{n}))$
is non-zero are of pure dimension $d$ over $\mathcal{O}$ and we
may restrict to those for the rest of the proof.

Let $U_{i}$ be such an open subset. We temporarily drop the subscript
$i$ to ease notations. Consider $\varphi:=(f_{1},\ldots,f_{m}):\mathbb{A}_{\mathcal{O}}^{r}\rightarrow\mathbb{A}_{\mathcal{O}}^{m}$.
The locus $M=\{x\in\mathbb{A}_{\mathcal{O}}^{r}\ \vert\ \dim_{x}(\varphi^{-1}(\varphi(x)))\leq d\}$
is open, by Chevalley's semicontinuity theorem \cite[Thm. 13.1.3.]{Gro66},
and contains $U=\varphi^{-1}(0)$. Moreover, $\dim_{x}(\varphi^{-1}(\varphi(x)))\geq r-m=d$
by construction, so $\dim_{x}(\varphi^{-1}(\varphi(x)))=d$ for all
$x\in M$. Therefore, $\varphi\vert_{M}$ is flat (see \cite[Prop. 6.1.5.]{Gro65})
and $N:=\varphi(M)\subseteq\mathbb{A}_{\mathcal{O}}^{m}$ is open.
We have thus obtained a morphism $\varphi:M\rightarrow N$ which is
flat, with l.c.i. geometric fibers of pure dimension $d$, and such
that $U=\varphi^{-1}(0)$.

Let us now relate counts of $\mathcal{O}/(\varpi^{n})$-points on
$U$ to the pushforward by $\varphi$ of the Haar measure on $\mathcal{O}^{r}$.
Let $\omega_{M}\in\Gamma(M,\Omega_{M/\mathcal{O}}^{r})$ (resp. $\omega_{N}\in\Gamma(N,\Omega_{N/\mathcal{O}}^{m})$)
be a gauge form and $\mu_{M}$ (resp. $\mu_{N}$) the associated measure
on $M^{\natural}$ (resp. $N^{\natural}$). We also call $\varphi:M^{\natural}\rightarrow N^{\natural}$
the map of $F$-analytic manifolds induced by $\varphi$. Then $\varpi^{n}\mathcal{O}^{m}\subseteq N^{\natural}$
and\[
\frac{\sharp U(\mathcal{O}/(\varpi^n))}{q^{nd}}
=
\sharp\{x\in M(\mathcal{O}/(\varpi^n))\ \vert\ \varphi(x)=0\in(\mathcal{O}/(\varpi^n))^m\}\cdot\frac{q^{-nr}}{q^{-nm}}
=
\frac{\varphi_*\mu_M(\varpi^n\mathcal{O}^m)}{\mu_N(\varpi^n\mathcal{O}^m)}.
\]As shown in \cite[\S 3.3.]{AA16}, the measure $\varphi_{*}(\mu_{M})$
is absolutely continuous with respect to $\mu_{N}$ and its density
at $y\in N^{\natural}$ is given by a $p$-adic integral on $\varphi^{-1}(y)^{\natural}$.
We now build explicit gauge forms inducing the corresponding measures
on $\varphi^{-1}(y)^{\natural}$. As before, $\varphi$ is a Gorenstein
morphism of pure dimension $d$, so there exists a canonical invertible
sheaf $\Omega_{M/N}$, which restricts to $\Omega_{M^{\mathrm{sm}}/N}^{d}$
on the smooth locus $M^{\mathrm{sm}}$ of $\varphi$. Moreover, there
is an isomorphism of invertible sheaves $\Omega_{M/\mathcal{O}}^{r}\simeq\Omega_{M/N}\otimes\varphi^{*}\Omega_{N/\mathcal{O}}^{m}$,
by \cite[Thm. 4.3.3.]{Con00}. From this we obtain a nowhere-vanishing
section $\eta\in\Gamma(M,\Omega_{M/N})$ such that $\omega_{M}=\eta\otimes\varphi^{*}\omega_{N}$
and which restricts to a nowhere-vanishing section $\eta_{y}\in\Gamma(\varphi^{-1}(y),\Omega_{\varphi^{-1}(y)/\mathcal{O}})$
for any $y\in N^{\natural}$, by \cite[Thm. 3.6.1.]{Con00}. Let us
call $\mu_{y}$ the measure induced by the gauge form $(\eta_{y})\vert_{\varphi^{-1}(y)^{\mathrm{sm}}}$
on $\varphi^{-1}(y)^{\natural}$. When $y=0$, we simply write $\eta:=\eta_{0}$
and $\mu:=\mu_{0}$ for the induced measure on $U^{\natural}=\varphi^{-1}(0)^{\natural}$.
By construction, $\mu$ is the restriction of $\mu_{\mathrm{can}}$
to $U^{\natural}$.

The next step is to apply \cite[Thm. 3.16.]{AA16}. We first need
to restrict $\mu_{M}$ to a locus where $\varphi_{F}$ is FRS. Since
$\varphi_{F}$ is flat, the locus where geometric fibers of $\varphi_{F}$
have rational singularities is open - see  \cite[Thm. 4]{Elk78} -
and contains $U_{F}$. Denote by $Z_{F}\subseteq M_{F}$ the complement
of this locus. Then $Z_{F}(F)\cap U(F)=\emptyset$. Since $M^{\natural}$
and $N^{\natural}$ are compact, $\varphi$ is closed and there exists
some $n_{0}\geq0$ such that $Z_{F}(F)\cap\varphi^{-1}(\varpi^{n_{0}}\mathcal{O}^{m})=\emptyset$.
Consequently, $\varphi_{F}$ is FRS on $M_{F}\setminus Z_{F}$ and
$A:=\varphi^{-1}(\varpi^{n_{0}}\mathcal{O}^{m})$ is a compact subset
of $(M_{F}\setminus Z_{F})(F)$. By \cite[Thm. 3.16.]{AA16}, the
density of $\varphi_{*}\left((\mu_{M})\vert_{A}\right)$ with respect
to $\mu_{N}$ is given by the function $y\mapsto\mathbf{1}_{\varpi^{n_{0}}\mathcal{O}^{m}}(y)\cdot\mu_{y}(\varphi^{-1}(y)^{\natural})$,
which is continuous. Therefore, for $n\geq n_{0}$:\[
\frac{\sharp U(\mathcal{O}/(\varpi^n))}{q^{nd}}
=
\frac{\varphi_*\mu_M(\varpi^n\mathcal{O}^m)}{\mu_N(\varpi^n\mathcal{O}^m)}
=
\frac{\varphi_*\left((\mu_M)\vert_{A}\right)(\varpi^n\mathcal{O}^m)}{\mu_N(\varpi^n\mathcal{O}^m)}
=
\frac{\int_{\varpi^n\mathcal{O}^m}\mu_{y}(\varphi^{-1}(y)^{\natural})d\mu_N}{\int_{\varpi^n\mathcal{O}^m}d\mu_N}
\underset{n\rightarrow+\infty}{\longrightarrow}
\mu_0(\varphi^{-1}(0)^{\natural})=\mu(U^{\natural}).
\]Finally, we compute $\mu_{\mathrm{can}}(X^{\natural})$ from $\mu_{i}(U_{i}^{\natural})$
by inclusion-exclusion. Counts of $\mathcal{O}/(\varpi^{n})$-points
on $X$ can also be computed by inclusion-exclusion, as mentioned
above. Adding up the contributions of all $U_{i}$, we obtain:\[
\underset{n\rightarrow +\infty}{\lim}\frac{\sharp X(\mathcal{O}/\varpi^{n})}{q^{nd}}
=
\mu_{\mathrm{can}}(X^{\natural}).
\]\end{proof}

We are now in a position to prove our main technical result for a
large class of $\mathbb{Z}$-schemes of finite type. We show how to
relate the counts $\sharp X(\mathbb{F}_{q}[t]/(t^{n}))$ to the canonical
measure on $X^{\natural}=X^{\mathrm{sm}}(F)\cap X(\mathcal{O})$ when
the residual characteristic is large enough.

\begin{thm} \label{Thm/JetCountCanMeas}

Let $X$ be a $\mathbb{Z}$-scheme of finite type and assume that
$X_{\bar{\mathbb{Q}}}$ is locally complete intersection, of pure
dimension $d$ and has rational singularities. Then for $p$ large
enough, the $\mathcal{O}$-scheme $X_{\mathcal{O}}$ satisfies the
assumptions of Lemma \ref{Lem/JetCountCanMeas}. Moreover, the sequence
$q^{-nd}\cdot\sharp X(\mathbb{F}_{q}[t]/(t^{n})),\ n\geq1$ converges
and its limit is given by:\[
\underset{n\rightarrow +\infty}{\lim}\frac{\sharp X(\mathbb{F}_{q}[t]/(t^{n}))}{q^{nd}}
=
\mu_{\mathrm{can}}(X^{\natural}).
\]\end{thm}

\begin{proof}

Assume that $X_{\mathcal{O}}$ satisfies the hypotheses of Lemma \ref{Lem/JetCountCanMeas}.
From \cite[Prop. 3.0.2.]{AA18}, we obtain that, for $p$ large enough:\[
\underset{n\rightarrow +\infty}{\lim}\frac{\sharp X(\mathbb{F}_{q}[t]/(t^{n}))}{q^{nd}}
=
\underset{n\rightarrow +\infty}{\lim}\frac{\sharp X(\mathcal{O}/(\varpi^{n}))}{q^{nd}}
=
\mu_{\mathrm{can}}(X^{\natural}).
\]Let us now check that for $p$ large enough: (i) $X$ is flat over
$\mathcal{O}$, with l.c.i. geometric fibers of pure dimension $d$
and (ii) $X_{\bar{F}}$ has rational singularities. (ii) follows straightforwardly
from the assumption on $X_{\bar{\mathbb{Q}}}$ by base change, so
let us check (i). By generic flatness, $X_{\mathbb{Z}[\frac{1}{N}]}$
is flat over $\mathbb{Z}[\frac{1}{N}]$ for some $N\geq1$. Thus by
\cite[\href{https://stacks.math.columbia.edu/tag/01UE}{Tag 01UE}, \href{https://stacks.math.columbia.edu/tag/01UF}{Tag 01UF}]{SP}
and \cite[Thm. 13.1.3.]{Gro66}, the locus $U\subseteq X_{\mathbb{Z}[\frac{1}{N}]}$
where the structure morphism $X_{\mathbb{Z}[\frac{1}{N}]}\rightarrow\Spec(\mathbb{Z}[\frac{1}{N}])$
has l.c.i. geometric fibers of pure dimension $d$ is open and contains
$X_{\mathbb{Q}}$, by assumption. By \cite[Cor. 9.5.2.]{Gro66}, we
obtain that $\{s\in\Spec(\mathbb{Z})\ \vert\ U_{s}=X_{s}\}\subseteq\Spec(\mathbb{Z})$
is open i.e. for $N$ large enough, the structure morphism is flat,
with l.c.i. geometric fibers of pure dimension $d$. So the same holds
for $X_{\mathcal{O}}$ for $p\geq N$. \end{proof}

\begin{rmk}

Note that a similar result holds for a scheme $X$ defined over the
ring of integers of some number field, with the same proof.

\end{rmk}

\paragraph*{Application to moduli of totally negative 2-Calabi-Yau categories}

Let us now consider a quotient stack $[X/G]$ coming from a 2-Calabi-Yau
category. If $[X/G]$ is locally modelled on moment maps of totally
negative quivers, then Theorem \ref{Thm/JetCountCanMeas} applies
and we obtain the desired interpretation of limits of jet counts for
a large class of moduli spaces.

\begin{thm} \label{Thm/CanMeas2CYMod}

Suppose that $X$ is of finite type, of pure dimension $d$ and defined
over $\mathbb{Q}$. Assume also that $[X/G]$ satisfies the hypotheses
of Theorem \ref{Thm/RatSgTotNeg2CY}. 

Then for $p$ large enough, the canonical measure $\mu_{\mathrm{can}}$
on $X^{\natural}=X^{\mathrm{sm}}(F)\cap X(\mathcal{O})$ is well-defined.
Moreover, the sequence $q^{-nd}\cdot\sharp X(\mathbb{F}_{q}[t]/(t^{n})),\ n\geq1$
converges and its limit is given by:\[
\underset{n\rightarrow +\infty}{\lim}\frac{\sharp X(\mathbb{F}_{q}[t]/(t^{n}))}{q^{nd}}
=
\mu_{\mathrm{can}}(X^{\natural}).
\]\end{thm}

Note that the assumption that $X$ is of pure dimension $d$ may be
dropped. Since $X$ is locally complete intersection, its connected
components are equidimensional and the counts of jets may be carried
out separately on each connected component with the appropriate value
of $d$.

\begin{rmk} \label{Rmk/CountJetsStacks}

When $G$ is connected, counts of jets on $X$ are equivalent to counts
of jets on the moduli stack $[X/G]$, weighted by their number of
automorphisms:\[
\sum_{z\in [X/G](\mathbb{F}_q[t]/(t^n))/\sim}\frac{1}{\sharp\Aut(z)}=\frac{\sharp X(\mathbb{F}_q[t]/(t^n))}{\sharp G(\mathbb{F}_q[t]/(t^n))}.
\]Indeed, the datum of a principal $G$-bundle $P\rightarrow\Spec(\mathbb{F}_{q}[t]/(t^{n}))$
and a $G$-equivariant morphism $P\rightarrow X$ is equivalent to
the datum of an orbit of $X(\mathbb{F}_{q}[t]/(t^{n}))$ under the
action of $G(\mathbb{F}_{q}[t]/(t^{n}))$. This is due to the fact
that the restricted morphism $P_{\mathbb{F}_{q}}\rightarrow\Spec(\mathbb{F}_{q})$
has a section, by Lang's theorem. By Hensel's lemma, this section
extends to $\Spec(\mathbb{F}_{q}[t]/(t^{n}))$ and so $P\simeq G\times\Spec(\mathbb{F}_{q}[t]/(t^{n}))$.
As a consequence, when $[X/G]$ satisfies the assumptions of Theorem
\ref{Thm/RatSgTotNeg2CY}, the weighted count of jets on $[X/G]$,
once normalized, converges to $\frac{\mu_{\mathrm{can}}(X^{\natural})}{\mu_{\mathrm{can}}(G^{\natural})}$
as $n$ goes to infinity.

\end{rmk}

\bibliographystyle{plain}
\addcontentsline{toc}{section}{\refname}\bibliography{0D__ISTA_Articles_Bibliographie}

\begin{thebibliography}{10}

\bibitem{AA16}
Avraham Aizenbud and Nir Avni.
\newblock Representation growth and rational singularities of the moduli space
  of local systems.
\newblock {\em Inventiones Mathematicae}, 204(1):245--316, 2016.

\bibitem{AA18}
Avraham Aizenbud and Nir Avni.
\newblock Counting points of schemes over finite rings and counting
  representations of arithmetic lattices.
\newblock {\em Duke Mathematical Journal}, 167(14):2721--2743, 2018.

\bibitem{AHR20}
Jarod Alper, Jack Hall, and David Rydh.
\newblock A {L}una {\'e}tale slice theorem for algebraic stacks.
\newblock {\em Annals of Mathematics}, 191(3):675--738, 2020.

\bibitem{AS18}
Enrico Arbarello and Giulia Sacc\`{a}.
\newblock Singularities of moduli spaces of sheaves on {K}3 surfaces and
  {N}akajima quiver varieties.
\newblock {\em Advances in Mathematics}, 329:649--703, 2018.

\bibitem{Bea00}
Arnaud Beauville.
\newblock Symplectic singularities.
\newblock {\em Inventiones Mathematicae}, 139(3):541--549, 2000.

\bibitem{BS21}
Gwyn Bellamy and Travis Schedler.
\newblock Symplectic resolutions of quiver varieties.
\newblock {\em Selecta Mathematica}, 27(36), 2021.

\bibitem{BGV16}
Raf Bocklandt, Federica Galluzzi, and Francesco Vaccarino.
\newblock The {N}ori-{H}ilbert scheme is not smooth for 2-{C}alabi-{Y}au
  algebras.
\newblock {\em Journal of Noncommutative Geometry}, 10:745--774, 2016.

\bibitem{Bou87}
Jean-Fran{\c{c}}ois Boutot.
\newblock Singularit{\'e}s rationnelles et quotients par les groupes
  r{\'e}ductifs.
\newblock {\em Inventiones Mathematicae}, 88(1):65--68, 1987.

\bibitem{BSV20}
Tristan Bozec, Olivier Schiffmann, and Eric Vasserot.
\newblock On the number of points of nilpotent quiver varieties over finite
  fields.
\newblock {\em Annales Scientifiques de l'{\'E}cole Normale Sup{\'e}rieure},
  53(6):1501--1544, 2020.

\bibitem{BD19}
Christopher Brav and Tobias Dyckerhoff.
\newblock Relative {C}alabi-{Y}au structures.
\newblock {\em Compositio Mathematica}, 155:372--412, 2019.

\bibitem{Bri17a}
Michel Brion.
\newblock Some structure theorems for algebraic groups.
\newblock In Mahir~Bilen Can, editor, {\em Algebraic Groups: Structures and
  Actions}, volume~94, pages 53--126, 2017.

\bibitem{Bud21}
Nero Budur.
\newblock Rational singularities, quiver moment maps and representations of
  surface groups.
\newblock {\em International Mathematics Research Notices},
  2021(15):11782--11817, 2021.

\bibitem{BZ19}
Nero Budur and Ziyu Zhang.
\newblock Formality conjecture for {K}3 surfaces.
\newblock {\em Compositio Mathematica}, 155(5):902--911, 2019.

\bibitem{BZ20}
Nero Budur and Michele Zordan.
\newblock On representation zeta functions for special linear groups.
\newblock {\em International Mathematics Research Notices}, 2020(3):868--882,
  2020.

\bibitem{COW24}
Francesca Carocci, Giulio Orecchia, and Dimitri Wyss.
\newblock {BPS}-invariants from $p$-adic integrals.
\newblock {\em Compositio Mathematica}, 160(7):1525--1550, 2024.

\bibitem{CLNS18}
Antoine Chambert-Loir, Johannes Nicaise, and Julien Sebag.
\newblock {\em Motivic integration}.
\newblock Springer, 2018.

\bibitem{Cha20}
Jean-Yves Charbonnel.
\newblock Projective dimension and commuting variety of a reductive {L}ie
  algebra.
\newblock {\em \url{https://arxiv.org/abs/2006.12942}}, 2020.

\bibitem{CL05}
Raf Cluckers and Fran{\c{c}}ois Loeser.
\newblock Ax-{K}ochen-{E}r{\v{s}}ov theorems for p-adic integrals and motivic
  integration.
\newblock In {\em Geometric methods in algebra and number theory}. Springer,
  2005.

\bibitem{CL10}
Raf Cluckers and Fran{\c{c}}ois Loeser.
\newblock Constructible exponential functions, motivic {F}ourier transform and
  transfer principle.
\newblock {\em Annals of Mathematics}, 171(2):1011--1065, 2010.

\bibitem{Con00}
Brian Conrad.
\newblock {\em Grothendieck duality and base change}.
\newblock Springer, 2000.

\bibitem{CB99a}
William Crawley-Boevey.
\newblock {DMV} lectures on representations of quivers, preprojective algebras
  and deformations of quotient singularities.
\newblock Available at
  \url{https://www.math.uni-bielefeld.de/~wcrawley/dmvlecs.pdf}, 1999.

\bibitem{CB01}
William Crawley-Boevey.
\newblock Geometry of the moment map for representations of quivers.
\newblock {\em Compositio Mathematica}, 126(3):257--293, 2001.

\bibitem{CB02}
William Crawley-Boevey.
\newblock Decomposition of {M}arsden-{W}einstein reductions for representations
  of quivers.
\newblock {\em Compositio Mathematica}, 2002.

\bibitem{CB03a}
William Crawley-Boevey.
\newblock Normality of {M}arsden-{W}einstein reductions for representations of
  quivers.
\newblock {\em Mathematische Annalen}, 325(1):55--79, 2003.

\bibitem{CBS06}
William Crawley-Boevey and Peter Shaw.
\newblock Multiplicative preprojective algebras, middle convolution and the
  {D}eligne-{S}imspon problem.
\newblock {\em Advances in Mathematics}, 201:180--208, 2006.

\bibitem{CBVB04}
William Crawley-Boevey and Michel van~den Bergh.
\newblock Absolutely indecomposable representations and {K}ac-{M}oody {L}ie
  algebras.
\newblock {\em Inventiones Mathematicae}, 155(3):537--559, 2004.

\bibitem{Dav18}
Ben Davison.
\newblock Purity of critical cohomology and {K}ac's conjecture.
\newblock {\em Mathematical Research Letters}, 25(2):469--488, 2018.

\bibitem{Dav21a}
Ben Davison.
\newblock Purity and 2-{C}alabi-{Yau} categories.
\newblock {\em \url{https://arxiv.org/abs/2106.07692}}, 2021.

\bibitem{Dav23a}
Ben Davison.
\newblock A boson-fermion correspondence in cohomological {D}onaldson-{T}homas
  theory.
\newblock {\em Glasgow Mathematical Journal}, 65:S28--S52, 2023.

\bibitem{Dav23c}
Ben Davison.
\newblock The integrality conjecture and the cohomology of preprojective
  stacks.
\newblock {\em Journal f{\"{u}}r die reine und angewandte Mathematik},
  804:105--154, 2023.

\bibitem{DHSM22}
Ben Davison, Lucien Hennecart, and Sebastian Schlegel-Mejia.
\newblock {BPS} {L}ie algebras for totally negative 2-{C}alabi-{Y}au categories
  and nonabelian {H}odge theory for stacks.
\newblock {\em \url{https://arxiv.org/abs/2212.07668}}, 2022.

\bibitem{Den87}
Jan Denef.
\newblock On the degree of {I}gusa's local zeta function.
\newblock {\em American Journal of Mathematics}, 109(6):991--1008, 1987.

\bibitem{Dol03}
I.~Dolgachev.
\newblock {\em Lectures on {I}nvariant {T}heory}.
\newblock Cambridge University Press, 2003.

\bibitem{ELM04}
Lawrence Ein, Robert Lazarsfeld, and Mircea Musta{\c{t}}{\u{a}}.
\newblock Contact loci in arc spaces.
\newblock {\em Compositio Mathematica}, 140:1229--1244, 2004.

\bibitem{Elk78}
Ren{\'e}e Elkik.
\newblock Singularit{\'e}s rationnelles et d{\'e}formations.
\newblock {\em Inventiones Mathematicae}, 47:139--147, 1978.

\bibitem{Gin06}
Victor Ginzburg.
\newblock Calabi-{Y}au algebras.
\newblock {\em \url{https://arxiv.org/abs/math/0612139}}, 2006.

\bibitem{Gla19}
Itay Glazer.
\newblock On rational singularities and counting points of schemes over finite
  rings.
\newblock {\em Algebra \& Number theory}, 13(2):485--500, 2019.

\bibitem{Gro65}
Alexander Grothendieck.
\newblock {\'E}l{\'e}ments de g{\'e}om{\'e}trie alg{\'e}brique: {IV.} {\'e}tude
  locale des sch{\'e}mas et des morphismes de sch{\'e}mas, {S}econde partie.
\newblock {\em Publications math{\'e}matiques de l'IHES}, 24:5--231, 1965.

\bibitem{Gro66}
Alexander Grothendieck.
\newblock {\'E}l{\'e}ments de g{\'e}om{\'e}trie alg{\'e}brique: {IV.} {\'e}tude
  locale des sch{\'e}mas et des morphismes de sch{\'e}mas, {T}roisi{\`e}me
  partie.
\newblock {\em Publications math{\'e}matiques de l'IHES}, 28:5--255, 1966.

\bibitem{Hau10}
Tam{\'a}s Hausel.
\newblock Kac's conjecture from {N}akajima quiver varieties.
\newblock {\em Inventiones Mathematicae}, 181(1):21--37, 2010.

\bibitem{HLRV11}
Tam{\'a}s Hausel, Emmanuel Letellier, and Fernando Rodriguez-Villegas.
\newblock Arithmetic harmonic analysis on character and quiver varieties.
\newblock {\em Duke Mathematical Journal}, 160(2):323--400, 2011.

\bibitem{HLRV13a}
Tam{\'a}s Hausel, Emmanuel Letellier, and Fernando Rodriguez-Villegas.
\newblock Arithmetic harmonic analysis on character and quiver varieties {II}.
\newblock {\em Advances in Mathematics}, 234:85--128, 2013.

\bibitem{HSS21}
Hans-Christian Herbig, Gerald Schwarz, and Christopher Seaton.
\newblock When does the zero fiber of the moment map have rational
  singularities?
\newblock {\em \url{https://arxiv.org/abs/2108.07306}}, 2021.

\bibitem{Huy16}
Daniel Huybrechts.
\newblock {\em Lectures on {K3} surfaces}.
\newblock Cambridge University Press, 2016.

\bibitem{HL10}
Daniel Huybrechts and Manfred Lehn.
\newblock {\em The geometry of moduli spaces of sheaves}.
\newblock Cambridge University Press, 2010.

\bibitem{Igu00}
Jun ichi Igusa.
\newblock {\em An {I}ntroduction to the {T}heory of {L}ocal {Z}eta
  {F}unctions}.
\newblock American Mathematical Society, 2000.

\bibitem{KLS06}
Dmitry Kaledin, Manfred Lehn, and Christoph Sorger.
\newblock Singular symplectic moduli spaces.
\newblock {\em Inventiones Mathematicae}, 164(3):591--614, 2006.

\bibitem{KS23a}
Daniel Kaplan and Travis Schedler.
\newblock Multiplicative preprojective algebras are 2-{C}alabi-{Y}au.
\newblock {\em Algebra and Number Theory}, 17(4):831--883, 2023.

\bibitem{KS97}
Masaki Kashiwara and Yoshihisa Saito.
\newblock Geometric construction of crystal bases.
\newblock {\em Duke Mathematical Journal}, 89(1):9--36, 1997.

\bibitem{Kel08}
Bernhard Keller.
\newblock Calabi-{Y}au triangulated categories.
\newblock {\em Trends in representation theory of algebras and related topics},
  pages 467--489, 2008.

\bibitem{KW21}
Bernhard Keller and Yu~Wang.
\newblock An introduction to relative {C}alabi-{Y}au structures.
\newblock {\em \url{https://arxiv.org/abs/2111.10771}}, 2021.

\bibitem{Kin94}
A.~D. King.
\newblock Moduli of representations of finite-dimensional algebras.
\newblock {\em Quarterly Journal of Mathematics}, 45(2):515--530, 1994.

\bibitem{LBP90}
Lieven Le~Bruyn and Claudio Procesi.
\newblock Semisimple representations of quivers.
\newblock {\em Transactions of the American Mathematical Society},
  317(2):585--598, 1990.

\bibitem{Lun73}
Domingo Luna.
\newblock Slices {\'e}tales.
\newblock {\em Bulletin de la Société Mathématique de France}, 33:81--105,
  1973.

\bibitem{Lus91}
George Lusztig.
\newblock Quivers, perverse sheaves and quantized enveloping algebras.
\newblock {\em Journal of the American Mathematical Society}, 4(2):365--421,
  1991.

\bibitem{Lus00}
George Lusztig.
\newblock Semicanonical bases arising from enveloping algebras.
\newblock {\em Advances in Mathematics}, 151(2):129--139, 2000.

\bibitem{Moz11a}
Sergey Mozgovoy.
\newblock Motivic {D}onaldson-{T}homas invariants and {M}c{K}ay correspondence.
\newblock {\em \url{https://arxiv.org/abs/1107.6044}}, 2011.

\bibitem{Mus01}
Mircea Musta\c{t}\v{a}.
\newblock Jet schemes of locally complete intersection canonical singularities.
\newblock {\em Inventiones Mathematicae}, 145(3):397--424, 2001.

\bibitem{Mus02}
Mircea Musta{\c{t}}{\u{a}}.
\newblock Singularities of pairs via jet schemes.
\newblock {\em Journal of the American Mathematical Society}, 15(3):599--615,
  2002.

\bibitem{Nak94}
Hiraku Nakajima.
\newblock Instantons on {ALE} spaces, quiver varieties and {K}ac-{M}oody
  algebras.
\newblock {\em Duke Mathematical Journal}, 76(2):365--416, 1994.

\bibitem{Nak98}
Hiraku Nakajima.
\newblock Quiver varieties and {K}ac-{M}oody algebras.
\newblock {\em Duke Mathematical Journal}, 91(3):515--560, 1998.

\bibitem{Rei08a}
Markus Reineke.
\newblock Moduli of representations of quivers.
\newblock {\em \url{https://arxiv.org/abs/0802.2147}}, 2008.

\bibitem{ST22}
Travis Schedler and Andrea Tirelli.
\newblock Symplectic resolutions for multiplicative quiver varieties and
  character varieties for punctured surfaces.
\newblock In {\em Representation Theory and Algebraic Geometry}, pages
  393--459. Springer, 2022.

\bibitem{S18}
Olivier Schiffmann.
\newblock Kac polynomials and {L}ie algebras associated to quivers and curves.
\newblock In {\em Proceedings of the International Congress of Mathematicians},
  pages 1393--1424, 2018.

\bibitem{Sch80}
Gerald Schwarz.
\newblock Lifting smooth homotopies of orbit spaces.
\newblock {\em Publications Math{\'e}matiques de l'IH{\'E}S}, 51:37--135, 1980.

\bibitem{SP}
{The {S}tacks {P}roject {A}uthors}.
\newblock \textit{Stacks Project}.
\newblock \url{https://stacks.math.columbia.edu}, 2024.

\bibitem{TV07}
Bertrand To{\"e}n and Michel Vaqui{\'e}.
\newblock Moduli of objects in dg-categories.
\newblock In {\em Annales scientifiques de l'Ecole normale sup{\'e}rieure},
  volume~40, pages 387--444, 2007.

\bibitem{VdB15}
Michel van~den Bergh.
\newblock Calabi-{Y}au algebras and superpotentials.
\newblock {\em Selecta Mathematica}, 2015.

\bibitem{VZG08}
Willem Veys and W.~A. Zuniga-Galindo.
\newblock Zeta functions for analytic mappings, log-principalization of ideals
  and {N}ewton polyhedra.
\newblock {\em Transactions of the American Mathematical Society},
  360(4):2205--2227, 2008.

\bibitem{Wys17b}
Dimitri Wyss.
\newblock {\em Motivic and p-adic {L}ocalisation {P}henomena}.
\newblock PhD thesis, EPFL, 2017.

\bibitem{Yas17}
Takehiko Yasuda.
\newblock The wild {M}c{K}ay correspondence and $p$-adic measures.
\newblock {\em Journal of the European Mathematical Society},
  19(12):3709--3743, 2017.

\bibitem{Yos01}
K{\=o}ta Yoshioka.
\newblock Moduli spaces of sheaves on abelian surfaces.
\newblock {\em Mathematische Annalen}, 321(4):817--884, 2001.

\end{thebibliography}

\end{document}